\DeclareSymbolFont{bbold}{U}{bbold}{m}{n}
\DeclareSymbolFontAlphabet{\mathbbold}{bbold}
\def\qmod#1#2{{\hbox{}^{\displaystyle{#1}}}\!\big/\!\hbox{}_{
\displaystyle{#2}}}
\def\resto#1#2{{
#1\hskip 0.4ex\vline_{\hskip 0.2ex\raisebox{-0,2ex}
{{${\scriptstyle #2}$}}}}}
 \def\psp#1#2%
 \def\psb#1#2%
 \def\pscr#1#2#3%
\def\C{{\mathbb C}}
\def\N{{\mathbb N}}
\def\P{{\mathbb P}}
\def\R{{\mathbb R}}
\def\Z{{\mathbb Z}}
\def\cringle{\mathaccent23}
\def\textmap#1{\mathop{\vbox{\ialign{
                                  ##\crcr
      ${\scriptstyle\hfil\;\;#1\;\;\hfil}$\crcr
      \noalign{\kern 1pt\nointerlineskip}
      \rightarrowfill\crcr}}\;}}
\def\bigtextmap#1{\mathop{\vbox{\ialign{
                                  ##\crcr
      ${\hfil\;\;#1\;\;\hfil}$\crcr
      \noalign{\kern 1pt\nointerlineskip}
      \rightarrowfill\crcr}}\;}}
\newcommand{\cal}{\mathcal}
\def\textlmap#1{\mathop{\vbox{\ialign{
                                  ##\crcr
      ${\scriptstyle\hfil\;\;#1\;\;\hfil}$\crcr
      \noalign{\kern-1pt\nointerlineskip}
      \leftarrowfill\crcr}}\;}}
\def\ag{{\mathfrak a}}
\def\bg{{\mathfrak b}}
\def\cg{{\mathfrak c}}
\def\eg{{\mathfrak e}}
\def\lg{{\mathfrak l}}
\def\mg{{\mathfrak m}}
\def\pg{{\mathfrak p}}
\def\vg{{\mathfrak v}}
\def\Ag{{\mathfrak A}}
\def\Bg{{\mathfrak B}}
\def\Cg{{\mathfrak C}}
\def\Pg{{\mathfrak P}}
\def\Sg{{\mathfrak S}}
\def\Vg{{\mathfrak V}}
\def\Wg{{\mathfrak W}}
\def\Zg{{\mathfrak Z}}
\theoremstyle{remark}
\newtheorem{ex}{Example}[section]
\newtheorem{re}{Remark}
\theoremstyle{plain}
\newtheorem{sz}{Satz}[section]
\newtheorem{thry}[sz]{Theorem}
\newtheorem{pr}[sz]{Proposition}
\newtheorem{co}[sz]{Corollary}
\newtheorem{dt}[sz]{Definition}
\newtheorem{lm}[sz]{Lemma}
\def\tr{\mathrm {Tr}}
\def\SU{\mathrm {SU}}
\def\SL{\mathrm {SL}}
\def\gl{\mathrm {gl}}
\def\Pic{\mathrm {Pic}}
\def\NS{\mathrm{NS}}
\def\deg{\mathrm {deg}}
\def\Hom{\mathrm{Hom}}
\def\Tors{\mathrm{Tors}}
\def\vol{\mathrm{vol}}
\def\id{ \mathrm{id}}
\def\im{\mathrm{im}}
\def\rk{\mathrm {rk}}
\def\st{\mathrm {st}}
\def\ss{\mathrm {ss}}
\def\pst{\mathrm{pst}}
\def\sgs{\mathrm{prss}}
\def\coker{\mathrm{coker}}
\def\S{\mathrm{S}}
\def\ASD{\mathrm{ASD}}
\def\HE{\mathrm{HE}}
\def\supp{\mathrm{supp}}
\def\Supp{\mathrm{Supp}}
\def\tf{\mathrm{tf}}
\newcommand\smvee{{\hskip -0.1ex \raise 0.2ex\hbox{$\scriptscriptstyle\vee$}}}
\def\we{{\smvee \hskip -0.3ex \smvee}}
\newcommand{\longtwoheadrightarrow}{}
\DeclareRobustCommand{\longtwoheadrightarrow}{\relbar\joinrel\twoheadrightarrow}
\begin{document}

\title {A continuity theorem for families of sheaves on complex surfaces}
\author{Nicholas Buchdahl \and Andrei Teleman \and  Matei Toma}

 \address{Nicholas Buchdahl: 
Department of Mathematics, University of Adelaide, Adelaide, 5005 Australia, email: nicholas.buchdahl@adelaide.edu.au }
\address{Andrei Teleman: Aix Marseille Université, CNRS, Centrale Marseille, I2M, UMR 7373, 13453 Marseille, France, email: andrei.teleman@univ-amu.fr }
\address{Matei Toma:  Institut de Mathématiques Elie Cartan, Université de Lorraine, B. P. 70239, 54506 Vandoeuvre-lès-Nancy Cedex, France
email: matei.toma@univ-lorraine.fr}

\begin{abstract}
We prove that any flat family $(\mathcal{ F}_u)_{u\in U}$ of rank 2 torsion-free 
sheaves on a Gauduchon surface defines a continuous map on the semi-stable 
locus $U^{\mathrm {ss}}:=\{u\in U|\ \mathcal{ F}_u\hbox{ \sl is slope semi-stable\/}\}$ with
 values in the Donaldson-Uhlenbeck compactification of the corresponding 
instanton moduli space.

In the general (possibly non-Kählerian) case, 
the Donaldson-Uhlenbeck compactification is not a 
complex space, and the set $U^{\mathrm {ss}}$ 
can be a complicated subset of the base space $U$ that
is neither open or closed in the classical topology, 
nor locally closed in the Zariski topology.   
This result provides an efficient tool for the explicit description of 
Donaldson-Uhlenbeck compactifications on arbitrary Gauduchon surfaces.	
\end{abstract}

\subjclass[2010]{32G13, 53C07, 53C55}

\maketitle

\tableofcontents

\section{Introduction}

The well-known Kobayashi-Hitchin correspondence (\cite{D}, \cite{Ko}, \cite{Bu1}, \cite{LY}, \cite{LT}) establishes an isomorphism ${\cal M}^\HE\to {\cal M}^\pst$ between the moduli space of Hermite-Einstein connections  on a Hermitian vector over a compact Gauduchon manifold \cite{Gau}, and  the corresponding moduli space of polystable holomorphic structures on the same bundle. Proved first by Donaldson for bundles on algebraic surfaces, and used as a tool for computing the Donaldson invariants of this special class of 4-manifolds,  this remarkable result and its subsequent generalisations have had an impressive impact in modern geometry.


This correspondence in particular gives a complex geometric interpretation (and implicitly a computation  method) for instanton moduli spaces. However, in general, an instanton moduli space  ${\cal M}^\ASD$ might be non-compact, and for effective applications, for instance for the construction   of Donaldson invariants \cite{DK}, one needs its canonical compactification $\overline{\cal M}^\ASD$, called the Donaldson-Uhlenbeck compactification. Therefore it is very important to  have a complex geometric interpretation, or at least a ``method of computation" in complex geometric terms, of these canonical compactifications.
\vspace{-2mm}\\

In the algebraic geometric framework such a complex geometric interpretation exists: one constructs a {\it continuous}, surjective comparison map ${\cal M}^\mathrm{G}\to \overline{\cal M}^\ASD$ from a moduli space  of torsion-free Gieseker semi-stable sheaves  to the Donaldson-Uhlenbeck compactification $\overline{\cal M}^\ASD$ (see  \cite{Li}, \cite{Morg}), the fibers of which can be described explicitly.  Jun Li's version of this important result goes further, and uses this comparison map to endow $\overline{\cal M}^\ASD$ with the structure of a projective algebraic variety.     


In the non-algebraic (even Kählerian) case such a comparison theorem is not available. Moreover, in the non-Kählerian   framework several unexpected, fundamental, surprising difficulties arise:
\begin{enumerate}
\item 	In many interesting cases, the Donaldson compactification $\overline{\cal M}^\ASD$ has no complex space structure. For example the compactified instanton moduli spaces used in  \cite{Te2} can be identified with  the 4-sphere $S^4$.
\item  The slope semi-stability condition is neither closed, nor open in families, not even with respect to the classical topology. Moreover, the slope semi-stability condition is not locally closed with respect to the Zariski topology. 
\item Whereas it is still true that any holomorphic family $({\cal F}_u)_{u\in U}$ of slope-semi-stable sheaves does define a continuous map $U\to \overline{\cal M}^\ASD$, one cannot obtain an open cover of $\overline{\cal M}^\ASD$ using images of such maps.
\item The Gieseker semi-stability and stability conditions for torsion-free sheaves can be naturally extended to the non-Kählerian framework \cite{BTT}, but neither  of them is an open condition, not even with respect to the classical topology. There does not exist a complex space classifying holomorphic families of Gieseker stable or Gieseker semi-stable torsion-free sheaves.       
\end{enumerate}

The dramatic consequences of these difficulties are:
\begin{enumerate}[1.]
\item Any attempt to prove a {\it general} theorem which endows the Donaldson-Uhlenbeck compactification $\overline{\cal M}^\ASD$ with a canonical complex space structure is hopeless. Note however that in \cite{BTT} we did endow a {\it special} class of   campactified instanton moduli spaces on Hopf surfaces  with complex space structures, but these structures are not obtained using holomorphic families of semi-stable sheaves.   
\item Any attempt to study the Donaldson-Uhlenbeck compactification $\overline{\cal M}^\ASD$ by comparing it with a moduli space of Gieseker semi-stable torsion-free sheaves cannot succeed.
 \item The maps defined by holomorphic families of slope semi-stable sheaves are not sufficient to understand the topology of the Donaldson-Uhlenbeck compactification $\overline{\cal M}^\ASD$.
\end{enumerate}

Our main result gives   a very general continuity theorem, which, in the projective algebraic framework specialises to the continuity of the comparison map ${\cal M}^G\to \overline{\cal M}^\ASD$ cited earlier.  Our theorem considers an {\it arbitrary} flat family $({\cal F}_u)_{u\in U}$ of rank 2 torsion-free sheaves on a Gauduchon surface, and gives a continuous map $U^\ss\to \overline{\cal M}^\ASD$  defined on the semi-stable locus
$$U^\ss:=\{u\in U|\ {\cal F}_u\hbox{ is slope semi-stable}\}
$$
(with respect to the relative topology of this subset). The point is that $U^\ss$ might be a rather complicated subset of the base space $U$. In general it is neither open or closed in the classical topology, nor locally closed in the Zariski topology. If one restricts the given family to $U^\ss$, the flatness condition will no longer make sense, because   $U^\ss$ is not necessarily  a complex space.

 Our result provides a very efficient tool for describing explicitly  Donaldson-Uhlenbeck compactifications $\overline{\cal M}^\ASD$ on arbitrary  Gauduchon surfaces, be  they algebraic or non-algebraic (including non-Kählerian). Indeed the family of continuous maps $U^\ss\to \overline{\cal M}^\ASD$ obtained in this way (defined on spaces $U^\ss$ {\it which in general are not complex spaces}) is sufficient to understand in detail the topology of $\overline{\cal M}^\ASD$ \cite{BTT}.\\

There are several important motivations underlying this study  of  Donaldson theory on general (not necessarily algebraic) Gauduchon surfaces:   

\begin{enumerate}[(a)]

\item  Recently Donaldson theory has found unexpected applications in non-Kählerian complex geometry \cite{Te1}, \cite{Te2}, proving results that do not seem to be accessible by any other means.

\item Moduli spaces of Hermite-Einstein connections on Gauduchon manifolds have an important differential geometric property, and intervene in modern differential geometric developments and in physics. More precisely, the stable part ${\cal M}^\st$ of the moduli space comes with a canonical Hermitian metric $H$ satisfying the strongly KT condition $\partial\bar \partial \omega_H=0$ \cite{LT}. It is  important to determine, in special cases, if this metric extends (possibly as a singular metric)  to the Donaldson-Uhlenbeck compactification.

\item   The continuity theorem  we prove in this article plays an important role in our new article \cite{BTT}, which gives an extension theorem for the complex structure on the regular part of certain instanton moduli spaces over class VII surfaces.
\end{enumerate}

Our method of proof is inspired by \cite{Morg}, but our result is  more general: whereas \cite{Morg} only considers families of   torsion-free Gieseker semi-stable sheaves with trivial determinant on a simply connected projective surface, our result concerns arbitrary families of   torsion-free sheaves on arbitrary Gauduchon surfaces. Moreover our proof fills in two apparent gaps in \cite{Morg}, as we   explain  in detail in   sections \ref{Pr1sect},  \ref{Pr2sect}.  We feel that filling in these gaps is necessary for such an important result.

The plan for the remainder of the paper is as follows: in section \ref{CTsection}  we construct the comparison map $U^\ss\to \overline {\cal M}^\ASD$ associated with a flat family of torsion-free, rank 2 sheaves, and  state the continuity theorem. The proof reduces to two convergence theorems: the first concerns    sequences of properly semi-stable sheaves, whereas the second deals with the stable case. These are treated respectively   in sections \ref{PropSSCase} and \ref{STCase}.

\section{The continuity theorem}\label{CTsection}

Let $(M,g)$ be a Riemannian, connected, oriented compact 4-manifold, $D$ be a Hermitian line bundle on $M$, and $a$ be a Hermitian connection on $D$. For a Hermitian   2-bundle $E$ on $M$ with $\det(E)=D$ put
$${\cal A}_a(E):=\{A\in {\cal A}(E)|\ \det(A)=a\},\ 
{\cal A}_a^{\ASD}(E):=\{A\in {\cal A}_a(E)|\  (F_A^0)^+=0\},$$
$$
\ {\cal M}^{\ASD}_a(E):=\qmod{{\cal A}_a^{\ASD}(E)}{{\cal G}_E},\
$$
where $F_A^0$ denotes the trace-free part of the curvature $F_A$ of $A$, and ${\cal G}_E:=\Gamma(M,\SU(E))$ is the $\SU(2)$-gauge group of $E$.

For $k\in\Z$, let $E_k$ be a Hermitian  2-bundle on $M$ with $\det(E_k)=D$, $c_2(E_k)=k$. Fix $c\in\Z$ and endow the union
$$\overline{{\cal M}}^{\ASD}_a(c)= \coprod_{\substack{4k-c_1^2(D)\geq 0\\ k\leq c}} {\cal M}^{\ASD}_a(E_{k})\times S^{c-k}(M) 
$$
with the Donaldson-Uhlenbeck topology \cite{DK}.\\

Let $X$ be a compact complex surface. We  identify the total symmetric power
$$S^*(X):=\coprod_{k\in\N} S^k(X)
$$
with the space of finite support maps $X\to\N$. Using this identification, for an element $\sigma\in S^*(X)$ we put 
$$|\sigma|=\sum_{x\in X}\sigma(x),$$
so one has $\sigma\in S^{|\sigma|}(X)$ for any $\sigma\in S^*(X)$. For a 0-dimensional torsion sheaf ${\cal Q}$ on $X$ we denote by $\Supp({\cal Q})$ the element of $S^*(X)$ defined by the map $X\ni x\mapsto \dim_\C({\cal Q}_x)$.  

Now let $(X,g)$ be a compact Gauduchon surface, $D$ be a Hermitian line bundle on $X$, and $E$ be a Hermitian  2-bundle on $X$ with $\det(E)=D$. For a fixed holomorphic structure ${\cal D}$ on $D$ we denote by ${\cal M}^\pst_{\cal D}(E)$ the moduli set of isomorphism classes of holomorphic structures ${\cal E}$ on $E$ with $\det({\cal E})={\cal D}$, modulo the $\SL(2)$ complex gauge group  ${\cal G}_E^\C:=\Gamma(M,\SL(E))$.  Putting $c:=c_2(E)$, note that ${\cal M}^\pst_{\cal D}(E)$ can be identified with the moduli space ${\cal M}^{\pst}_{[{\cal D}]}(c)$ of rank 2 holomorphic bundles ${\cal E}$ on $X$ with $c_2({\cal E})=c$ and $\det({\cal E})\simeq {\cal D}$. \\

The Kobayashi-Hitchin correspondence defines for any $k\in\Z$  a bijection 
$$\mathrm{kh}:{\cal M}^{\ASD}_a(E_{k})\textmap{\simeq}  {\cal M}^{\pst}_{[{\cal D}]}(k).$$ 

\begin{dt} Let ${\cal F}$ be a torsion-free coherent sheaf of rank 2 on $X$ whose bidual ${\cal F}^\we$ is slope semi-stable. We associate to ${\cal F}$ two invariants: the  {\rm bubbling invariant} $\sigma({\cal F})$ of ${\cal F}$, which is an element of $S^*(X)$, and the {\rm polystable class invariant} $P({\cal F})$ of ${\cal F}$ which is the isomorphism class of a rank 2-polystable bundle.  The two invariants are defined as follows:
\begin{itemize}
\item If ${\cal F}^\we$ is stable, put
$$\sigma({\cal F}):=\Supp\left(\qmod{{\cal F}^\we}{\cal F}\right),\ P({\cal F}):=[{\cal F}^\we].
$$
\item If ${\cal F}^\we$ is properly semi-stable, let
$$0\to {\cal L}\to {\cal F}^\we\to {\cal M}\otimes {\cal I}_Z\to 0
$$
be a destabilising short exact sequence, where ${\cal L}$, ${\cal M}$ are rank 1 locally free  sheaves, and $Z$ is a 0-dimensional locally complete intersection in $X$. Then put
$$\sigma({\cal F}):=\Supp\left(\qmod{{\cal F}^\we}{\cal F}\right)+\Supp({\cal O}_Z),\ P({\cal F}):=[{\cal L}\oplus {\cal M}].
$$
\end{itemize}
\end{dt}
\begin{re}
The two invariants are well defined.	
\end{re}
\begin{proof}
Indeed, let 	
$$0\to {\cal L}\textmap{j} {\cal F}^\we\textmap{p} {\cal M}\otimes {\cal I}_Z\to 0
$$
$$0\to {\cal L}'\textmap{j'}{\cal F}^\we\textmap{p'} {\cal M}'\otimes {\cal I}_{Z'}\to 0
$$
be two destabilising exact sequences of ${\cal F}^\we$, where ${\cal L}$, ${\cal L}'$, ${\cal M}$, ${\cal M}'$ are rank 1 locally free sheaves on $X$, and $Z$, $Z'$ are 0-dimensional locally complete intersections in $X$. If $p\circ j'\ne 0$, then $H^0({\cal L}'^\smvee\otimes {\cal M}\otimes {\cal I}_Z)\ne 0$. Since $\deg({\cal L}'^\smvee\otimes {\cal M})=0$, this implies that ${\cal L}'\simeq {\cal M}$, and $Z=\emptyset$, and the first exact sequence splits. Computing $\det({\cal F}^\we)$ and $c_2({\cal F}^\we)$ using the two exact sequences we get ${\cal M}'\simeq {\cal L}$ and $Z'=\emptyset$. Therefore, in this case, one gets the same results using the two exact sequences, namely $\sigma({\cal F})=\Supp\left(\qmod{{\cal F}^\we}{\cal F}\right)$, $P({\cal F})=[{\cal L}\oplus{\cal M}]=[{\cal L}'\oplus{\cal M}']$. 

If $p\circ j'= 0$, then $j'$ factorises through a sheaf monomorphism $k:{\cal L}'\to {\cal L}$. Since  ${\cal L}$, ${\cal L}'$ are locally free rank 1 sheaves with $\deg({\cal L}')=\deg({\cal L})$, it follows that this monomorphism is an isomorphism. 
%
%
Hence $j'=j\circ k$, ${\cal M}\otimes {\cal I}_Z\simeq {\cal M}'\otimes {\cal I}_{Z'}$, ${\cal M}\simeq {\cal M}'$, and $Z=Z'$.
\end{proof}

With these definitions we can state

\begin{thry}[The Continuity Theorem]\label{CT}  Let $(X,g)$ be a Gauduchon surface, ${\cal D}$ be a holomorphic line bundle on $X$, $c\in\Z$, $U$ be a reduced complex space, and $\mathscr{F}$ be a sheaf  of rank 2  on $U\times X$, flat over $U$, such that 
\begin{itemize}
\item  $\det({\cal F}_u)\simeq {\cal D}$ for any $u\in U$.
\item For any $u\in U$ the sheaf ${\cal F}_u$ defined by the restriction   $\resto{\mathscr{F}}{\{u\}\times X}$   is torsion-free with $c_2({\cal F}_u)=c$.
\end{itemize}
  Put
$$U^\ss:=\{u\in U|\ {\cal F}_u\hbox{ is slope semi-stable}\}.
$$ 
Then the map $\Phi_	{\mathscr{F}}:U^\ss\to \overline{{\cal M}}^{\ASD}_a(c)$ defined by
$$\Phi_{\mathscr{F}}(u):=\big(\mathrm{kh}^{-1}(P({\cal F}_u)),\sigma({\cal F}_u)\big)
$$
is continuous.
\end{thry}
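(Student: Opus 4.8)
The plan is to reduce the statement to a sequential-continuity assertion, exploiting that the source space $U^\ss$ sits inside a complex space and hence is metrizable, so continuity of $\Phi_{\mathscr{F}}$ is equivalent to showing that for every sequence $(u_n)\subset U^\ss$ converging to $u_\infty\in U^\ss$ one has $\Phi_{\mathscr{F}}(u_n)\to\Phi_{\mathscr{F}}(u_\infty)$ in the Donaldson-Uhlenbeck topology of $\overline{{\cal M}}^{\ASD}_a(c)$. Unwinding the definition of that topology, convergence of a sequence of pairs $\big(\mathrm{kh}^{-1}(P({\cal F}_{u_n})),\sigma({\cal F}_{u_n})\big)$ means two things: the underlying instantons (equivalently, via the Kobayashi-Hitchin correspondence, the polystable bundles $P({\cal F}_{u_n})$) should converge weakly, away from a finite bubbling set, to the limit instanton $\mathrm{kh}^{-1}(P({\cal F}_{u_\infty}))$; and the total bubbling measure $4\pi^2\sigma(\cdot)$ should converge, accounting for curvature concentrating at the bubbling points. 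So the heart of the matter is a convergence theorem for the polystable representatives $P({\cal F}_u)$ together with bookkeeping of how the invariant $\sigma({\cal F}_u)$ absorbs the lost second Chern class.

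First I would set up the semicontinuity framework: along the sequence $u_n\to u_\infty$ the biduals ${\cal F}_{u_n}^\we$ are slope semi-stable rank $2$ bundles with $\det\simeq{\cal D}$ and $c_2$ that may drop in the limit, the difference being recorded by the length of ${\cal F}^\we/{\cal F}$ and, in the properly semi-stable case, by the scheme $Z$. The key structural point is that $U^\ss$ partitions according to whether ${\cal F}_{u}^\we$ is stable or properly semi-stable, and these two regimes genuinely behave differently; this is exactly why the proof is announced to split into the two convergence theorems of sections \ref{PropSSCase} and \ref{STCase}. For the stable case I would argue that after passing to a subsequence the holomorphic structures ${\cal F}_{u_n}^\we$ converge, in an appropriate (weak $L^2_1$, or sheaf-theoretic via flatness) sense, to a limit that is an extension of the destabilised-in-the-limit object, with the defect in $c_2$ localising to finitely many points; the induced Hermite-Einstein connections then converge weakly modulo bubbling, by an Uhlenbeck-type compactness argument applied to the curvatures, whose $L^2$ norms are controlled uniformly by $c_2$ and the Gauduchon metric. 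For the properly semi-stable case the polystable representative is $[{\cal L}\oplus{\cal M}]$, and here I would track both the support of ${\cal F}^\we/{\cal F}$ and the points of $Z$ degenerating from the destabilising sub-line-bundle, showing that the limiting splitting and the accumulated bubbling measure match $P({\cal F}_{u_\infty})$ and $\sigma({\cal F}_{u_\infty})$.

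The main technical engine throughout is the Kobayashi-Hitchin correspondence, which translates the algebraic degeneration of sheaves into analytic (weak) convergence of the associated instantons; combined with the a priori bound $\|F_{A_n}^0\|_{L^2}^2 = 8\pi^2\big(c_2 - \tfrac14 c_1^2\big)$ for the trace-free curvature, this yields Uhlenbeck weak compactness and identifies the bubbling set as the support of the concentrated curvature. The crux is to prove the \emph{exact} matching of limits: that the weak limit of the instantons is gauge-equivalent to $\mathrm{kh}^{-1}(P({\cal F}_{u_\infty}))$ and, simultaneously, that the concentrated curvature mass equals $4\pi^2|\sigma({\cal F}_{u_\infty})|$ with the correct distribution over $X$. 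This identification is where the two invariants $P$ and $\sigma$ must be seen to interlock: curvature that disappears from the instanton limit must reappear, in the right quantities, as bubbling points in $\sigma$, so that the total $c_2$ is conserved in the limit.

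The hard part will be handling the non-Kählerian generality together with the possibly pathological nature of $U^\ss$. Because $U^\ss$ need be neither open, closed, nor locally closed, one cannot restrict $\mathscr{F}$ to $U^\ss$ as a flat family, so all the degeneration analysis must be carried out on sequences in the ambient $U$ while only asserting the conclusion for limits that happen to land back in $U^\ss$; controlling the interplay between the two semi-stability regimes along a single sequence (where, e.g., stable members may limit to a properly semi-stable one, or the bubbling set may collide with the support of the double-dual quotient) is the delicate point. I expect the subtlety flagged in the introduction — the two ``apparent gaps'' in \cite{Morg} addressed in sections \ref{Pr1sect} and \ref{Pr2sect} — to concern precisely this accounting: ensuring no bubbling mass is lost or double-counted when the sheaf singularities and the instanton concentration points interact, and verifying the convergence uniformly without the Kähler identities that simplify the classical arguments.
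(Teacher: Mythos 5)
Your reduction to sequential continuity and the splitting of any sequence into a stable and a properly semi-stable subsequence is exactly the paper's first step, and the use of Uhlenbeck compactness plus the Kobayashi--Hitchin correspondence in the stable regime is also the right framework. But the proposal stops precisely where the two real difficulties begin, and in both cases you assert the conclusion rather than supply the mechanism.

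For a sequence of \emph{properly semi-stable} sheaves, your plan to ``track the support of ${\cal F}^\we/{\cal F}$ and the points of $Z$'' does not confront the actual obstruction. Passing to the limit of the destabilising monomorphisms ${\cal L}_n\otimes{\cal I}_{\Ag_n}\hookrightarrow{\cal F}_{u_n}$ (via semicontinuity of $\Hom$ and properness of $\deg_g$ on components of $\Pic(X)$) produces a destabilising morphism into ${\cal F}_{u_\infty}$ whose cokernel may acquire $0$-dimensional \emph{torsion}; moreover the lengths $|\Ag_n|$ and $|\Bg_n|$ of the sub- and quotient-singularities need not be equal, so there is no a priori conservation law forcing $\sigma({\cal F}_{u_n})\to\sigma({\cal F}_{u_\infty})$. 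Closing this requires two ingredients absent from your sketch: Flenner's representability of the relative $\underline{{\cal H}om}$ functor, to place the sequence of destabilising morphisms \emph{and its limit} inside a single flat analytic family of rank-$1$ quotients; and a ``compensation principle'' (Proposition \ref{newsigma}, proved via Hironaka's flattening theorem and Barlet cycle spaces) showing that the torsion appearing in a limit quotient is exactly offset by the jump of $\Supp\big({{\cal F}^0}^\we/{\cal F}^0\big)$, so that the combination $\Supp\big({{\cal F}^0}^\we/{\cal F}^0\big)-\Supp({\cal T}ors({\cal F}))$ varies continuously. Note also that this part of the argument is purely complex-analytic; no gauge theory or bubbling measure enters.

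For a sequence of \emph{stable} sheaves, Uhlenbeck compactness gives you a limit projectively ASD connection $A$ on some bundle $V$ and a bubbling set, but you have no argument identifying $[{\cal V}]=[(V,\bar\partial_A)]$ with $P({\cal F}_{u_\infty})$ and the concentrated charge with $\sigma({\cal F}_{u_\infty})$ --- you call this ``the crux'' and leave it there. The paper's mechanism is the construction of a \emph{non-trivial} holomorphic morphism $g_\infty:{\cal F}_{u_\infty}^\we\to{\cal V}$ as a limit of comparison morphisms normalised in $L^4$ on the complement of small balls around $\mathrm{Sing}({\cal F}_{u_\infty})$. The danger --- and the second gap in \cite{Morg} that the paper repairs --- is that the $L^\infty$-control obtained from the elliptic inequality for $|\tilde j_n|^2$ holds only on a smaller region, so the $L^4$-mass could concentrate on the intermediate annuli and the limit could vanish identically. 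Ruling this out requires the a priori maximum-principle estimate for holomorphic sections over an annulus (Corollary \ref{ann-est}, via Griffiths negativity of a twisted metric), which has no counterpart in your outline. Without $g_\infty$ the matching of $P$ and $\sigma$ with the Uhlenbeck limit is unproved.
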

\begin{proof}
Let $u\in U^\ss$. In order to prove the continuity of $\Phi_{\mathscr{F}}$ at $u$ it suffices to prove that  any sequence $(u_n)_n$ converging to $u$ in $U^\ss$ has  a subsequence 	$(u_{n_k})_k$   for which  $\lim_{k\to\infty}\Phi_{\mathscr{F}}(u_{n_k})=\Phi_{\mathscr{F}}(u)$. Writing $U^\ss=U^{\sgs}\cup U^{\st}$, where
\begin{align*}
U^{\sgs}&:=\{u\in U|\ {\cal F}_u \hbox{ is properly slope semi-stable}\},\\
U^{\st}&:=\{u\in U|\ {\cal F}_u \hbox{ is  slope  stable}\},	
\end{align*}
we see that it suffices to prove: 
\begin{enumerate}
\item Any sequence $(u_n)_n$ in $U^{\sgs}$ converging to $u$   has  a subsequence 	$(u_{n_k})_k$   for which  $\lim_{k\to\infty}\Phi_{\mathscr{F}}(u_{n_k})=\Phi_{\mathscr{F}}(u)$,  	
\item Any sequence $(u_n)_n$ in $U^{\st}$ converging to $u$   has  a subsequence 	$(u_{n_k})_k$   for which  $\lim_{k\to\infty}\Phi_{\mathscr{F}}(u_{n_k})=\Phi_{\mathscr{F}}(u)$.
\end{enumerate}
The first statement will follow from Propositions \ref{PropSS}, \ref{PropCont1}, whereas the second  is given by Proposition \ref{PropCont2}.
\end{proof}

\begin{re}
In the algebraic geometric framework semi-stability is a Zariski open condition. If $X$ is non-Kählerian, the set  $U^{\mathrm{ss}}$ might not be  even locally analytic. \end{re}
\begin{ex}
Let $(X,g)$ be a Gauduchon surface  with $b_1(X)=1$, $p_g(X)=0$. The Picard group $\Pic^0(X)$ of isomorphism classes of topologically trivial line bundles is isomorphic to $\C^*$. Let $\mathscr{L}$ be a Poincaré line bundle on $\Pic^0(X)\times X$. For any $\varepsilon>0$ the condition $|\deg_g(l)|<\varepsilon$ defines an annulus $A_\varepsilon\subset\Pic^0(X)$. For $l\in \Pic^0(X)$ denote by ${\cal L}_l$ the line bundle on $X$ associated with the restriction $\resto{\mathscr{L}}{\{l\}\times X}$.	Let $V\subset X$ be a Stein open subset, and $\varepsilon>0$ be sufficiently small. In \cite[Section 5]{BTT} we construct a rank 2 locally free sheaf $\mathscr{E}$ on $A_\varepsilon\times V\times X$, such for any $(l,x)\in  A_\varepsilon\times V$ the bundle ${\cal E}_{lx}$ on $X$ defined by the restriction $\resto{\mathscr{E}}{\{(l,x)\}\times X}$ is isomorphic to a Serre extension of the form
$$0\to {\cal L}_l\to {\cal E}_{lx}\to {\cal L}_l^\smvee\otimes {\cal I}_x\to 0.
$$
Putting $U:=A_\varepsilon\times V$, we see that in this case one has
$$U^{\ss}=A_\varepsilon^{\leq 0}\times V,
$$
where $A_\varepsilon ^{\leq 0}:=\{l\in A_\varepsilon|\ \deg_g(l)\leq 0\}$, so in this case $U^{\ss}$ is a manifold with boundary.
\end{ex}
\begin{ex} Keeping the notations from the previous example, let $\Pg^0$ be the quotient of $\Pic^0(X)$ by the involution $l\mapsto l^{\smvee}$, and $V\subset X$ be a contractible, Stein open subset. In \cite[Section 4.1]{BTT} we construct a sheaf $\mathscr{F}$ on $\Pg^0\times V\times X$, flat over $\Pg^0\times V$, such that for any $(p,x)\in \Pg^0\times V$ the sheaf ${\cal F}_{px}$ defined by the restriction $\resto{\mathscr{F}}{\{(p,x)\}\times X}$ can be written as a non-split extension
$$0\to {\cal L}_l\otimes {\cal I}_x\to {\cal F}_{px}\to {\cal L}_l^\smvee\to 0,
$$
for any representative $l\in p$. Putting $U:=\Pg^0\times V$, in this case we  have
$$U^\ss=\Cg^0\times V,
$$
where $\Cg^0\subset \Pg^0$ is the line segment obtained as the quotient of the circle %
$$C^0:=\{l\in\Pic^0(X)|\ \deg_g(l)=0\}$$
 by the involution $l\mapsto l^{\smvee}$. \end{ex}

\begin{re}\label{SmoothBase} Using the decomposition of $U$ as a union of its irreducible components, and a standard topological argument, it follows that it suffices to prove the theorem when $U$ is irreducible. Moreover, replacing $U$ by a desingularisation of $U$ and using the invariance of flatness under pull-back (see \cite[Remark p. 153]{Fi}) together with the surjectivity and properness of a desingularisation map, we see that it suffices to prove the statement under the assumption that $U$ is a connected smooth manifold. 
\end{re}

\section{Sequences of properly semi-stable sheaves}\label{PropSSCase}

\subsection{The difficulty}\label{Pr1sect}

With the assumptions and notations of Theorem \ref{CT}, let $(u_n)_n$ be a sequence in $U^\ss$ converging to $u\in U^\ss$.  In this section we prove that,  if the sheaves  ${\cal F}_{u_n}$ are properly semi-stable, then ${\cal F}_{u}$ is also properly semi-stable, and $\lim_{n\to \infty} \Phi_{\mathscr{F}} (u_n)= \Phi_{\mathscr{F}}(u)$. This part of the proof corresponds to section 4.1 in \cite{Morg}. Note however that there appears to be a significant gap in that section of \cite{Morg},  as we now explain. 

Since ${\cal F}_n:={\cal F}_{u_n}$ is properly slope semi-stable, we get, for any $n\in\N$, a slope-destabilising exact sequence of the form
$$0\to {\cal L}_n\otimes {\cal I}_{\Ag_n}\to {\cal F}_n\to {\cal M}_n\otimes  {\cal I}_{\Bg_n}\to 0,
$$ 
where ${\cal L}_n$, ${\cal M}_n$ are line bundles on $X$, and $\Ag_n$, $\Bg_n$ are 0-dimensional complex subspaces of $X$. The slope semi-stability of ${\cal F}_n$ implies $\deg_g({\cal L}_n)=\deg_g({\cal M}_n)$, but it does {\it not} imply $|\Ag_n|=|\Bg_n|$ as claimed in loc.\ cit.\ (even if one assumes that we are in the algebraic geometric framework, and even if all sheaves  are assumed to be Gieseker semi-stable). 

The argument  in \cite{Morg} uses  the equality $|\Ag_n|=|\Bg_n|$ in an essential way. Since one cannot use this equality, a substantial difficulty appears. As in loc.\ cit.\ one can prove that, passing to a subsequence if necessary, the sequences $(\Ag_n)_n$, $(\Bg_n)_n$, $[{\cal L}_n]_n$, $[{\cal M}_n]_n$ converge in the respective spaces to points,  $\Ag_\infty$, $\Bg_\infty$, $[{\cal L}_\infty]$, $[{\cal M}_\infty]$. Using a well-known semicontinuity theorem for the $\Hom(\cdot,\cdot)$ functor, one obtains a non-trivial sheaf morphism ${\cal L}_\infty\otimes  {\cal I}_{\Ag_\infty}\to {\cal F}_u$. 
However, the  cokernel of the obtained morphism ${\cal L}_\infty\otimes  {\cal I}_{\Ag_\infty}\to {\cal F}_u$ might contain torsion, so one must carefully control  the contribution of this torsion to $\Phi_{\mathscr{F}}(u)$.   Our proof is based on a  general ``compensation principle" for    families of rank 1 sheaves (Proposition \ref{newsigma}). This principle concerns flat families of rank 1 sheaves which might contain sheaves with non-trivial torsion, and states that the appearance of torsion in the limit sheaf (when passing to the limit in the parameter space) is compensated by a jump of the singularity set of its bidual. The main ingredients in the proof of this compensation principle are the Hironaka flattening theorem, and Barlet's theory of cycle spaces.

The claimed equality $\lim_{n\to \infty} \Phi_{\mathscr{F}} (u_n)= \Phi_{\mathscr{F}}(u)$ is proved in Proposition \ref{PropCont1}. For this result we make  use of deep complex-geometric results, for example of Flenner's representability theorem for  the functor $\underline{{\cal H}om}_S({\cal A},{\cal B})$ (see \cite[sec. 3.2]{Fl}) associated to a pair $({\cal A},{\cal B})$ of coherent sheaves defined on a space over a complex space $S$.  The main difficulty is to show that, after passing to a subsequence, the sequence of triples  $(\Ag_n,[{\cal L}_n],{\cal L}_n\otimes {\cal I}_{\Ag_n}\to {\cal F}_n )_n$ and also its limit $(\Ag_\infty,[{\cal L}_\infty],{\cal L}_\infty\otimes {\cal I}_{\Ag_\infty}\to {\cal F}_u )$ all belong to the same analytic family; we will then apply our compensation principle to the corresponding family of quotients.  Note that this part of the proof, in which we  pass from a sequence to an analytic family, illustrates  a significant general difficulty in proving comparison theorems between a complex-geometric and a gauge-theoretical moduli space: whereas the topology of a   gauge-theoretical moduli space is controlled by specifying the converging sequences,  the topology of a complex-geometric moduli space  is controlled using holomorphic families.

\subsection{The proof}

With the notations and assumptions of Theorem \ref{CT}, denote by $U^\sgs\subset U$ the subset 
$$U^{\sgs}:=\{u\in U|\ {\cal F}_u \hbox{ is properly slope semi-stable}\}.
$$
In this section we will prove that $U^\sgs$ is closed in $U^\ss$ and that the restriction $\resto{\Phi_{{\cal F}}}{U^\sgs}$ is continuous, which will complete the first part of the proof of Theorem \ref{CT}.

\begin{pr} \label{ExactSeq}
Let $X$ be a complex surface, ${\cal F}$ be a rank 2 torsion-free sheaf on $X$, and  ${\cal A}\subset {\cal F}$ be a rank 1 subsheaf such that the quotient ${\cal B}:={\cal F}/{\cal A}$ is also torsion-free. Put  
$${\cal N}:=\qmod{{\cal F}^\we/{\cal A}}{\Tors({\cal F}^\we/{\cal A})}.
$$
Then one has a canonical exact sequence of torsion sheaves on $X$:
$$0\to \qmod{{\cal A}^\we}{{\cal A}}\to \qmod{{\cal F}^\we}{{\cal F}}\to \qmod{{\cal B}^\we}{{\cal B}}\to \qmod{{\cal N}^\we}{{\cal N}}\to 0.
$$
\end{pr}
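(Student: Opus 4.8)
The plan is to obtain the asserted four-term sequence by splicing two canonical short exact sequences, each produced from a commutative diagram of bidualisation maps together with the snake lemma. Throughout I work on the smooth surface $X$, so that the bidual of any torsion-free coherent sheaf is reflexive, hence locally free, and for such a sheaf ${\cal G}$ the canonical map ${\cal G}\to {\cal G}^\we$ is injective with cokernel $\qmod{{\cal G}^\we}{{\cal G}}$ a torsion sheaf supported in dimension $0$. I will also use the universal property that any morphism from a torsion-free sheaf into a reflexive sheaf factors uniquely through the bidual.

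First I would realise ${\cal A}^\we$ as a subsheaf of ${\cal F}^\we$. The composite ${\cal A}\hookrightarrow {\cal F}\hookrightarrow {\cal F}^\we$ maps ${\cal A}$ into the reflexive sheaf ${\cal F}^\we$, so it factors through a map ${\cal A}^\we\to {\cal F}^\we$, which is injective because it is generically injective and ${\cal A}^\we$ is torsion-free. I then claim ${\cal A}^\we$ is saturated in ${\cal F}^\we$, i.e.\ ${\cal N}':={\cal F}^\we/{\cal A}^\we$ is torsion-free. Indeed, if $\tau:=\Tors({\cal N}')$ were nonzero, its preimage ${\cal A}''\subset {\cal F}^\we$ would be a rank $1$ torsion-free sheaf with ${\cal A}^\we\subsetneq {\cal A}''$ and ${\cal A}''/{\cal A}^\we$ torsion, whence ${\cal A}''^\we=({\cal A}^\we)^\we={\cal A}^\we$ and therefore ${\cal A}''\subseteq {\cal A}''^\we={\cal A}^\we$, a contradiction. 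Consequently $\Tors({\cal F}^\we/{\cal A})={\cal A}^\we/{\cal A}$, so the sheaf of the statement is canonically ${\cal N}={\cal F}^\we/{\cal A}^\we={\cal N}'$, and we obtain a canonical short exact sequence $0\to {\cal A}^\we\to {\cal F}^\we\to {\cal N}\to 0$.

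Next I would assemble the commutative diagram whose rows are the short exact sequences $0\to {\cal A}\to {\cal F}\to {\cal B}\to 0$ and $0\to {\cal A}^\we\to {\cal F}^\we\to {\cal N}\to 0$, with the left and middle vertical arrows the canonical bidualisation maps and the right vertical arrow $b\colon {\cal B}\to {\cal N}$ induced on cokernels. The map $b$ is generically an isomorphism, and since ${\cal B}$ is torsion-free it is injective with $0$-dimensional cokernel. The outer vertical maps are injective with cokernels $\qmod{{\cal A}^\we}{{\cal A}}$ and $\qmod{{\cal F}^\we}{{\cal F}}$, so all three kernels in the snake sequence vanish and it collapses to the canonical short exact sequence $0\to \qmod{{\cal A}^\we}{{\cal A}}\to \qmod{{\cal F}^\we}{{\cal F}}\to {\cal N}/{\cal B}\to 0$, identifying $\coker(b)={\cal N}/{\cal B}$ as the third term.

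It remains to analyse ${\cal N}/{\cal B}$. Since $b$ is injective with $0$-dimensional cokernel, ${\cal B}$ and ${\cal N}$ agree in codimension $1$, so the induced morphism ${\cal B}^\we\to {\cal N}^\we$ of line bundles is an isomorphism; setting $L:={\cal B}^\we={\cal N}^\we$ we obtain inclusions ${\cal B}\subseteq {\cal N}\subseteq L$, and the third isomorphism theorem yields the canonical short exact sequence $0\to {\cal N}/{\cal B}\to \qmod{{\cal B}^\we}{{\cal B}}\to \qmod{{\cal N}^\we}{{\cal N}}\to 0$. Splicing this with the sequence from the previous step along the common term ${\cal N}/{\cal B}$ — the middle arrow $\qmod{{\cal F}^\we}{{\cal F}}\to \qmod{{\cal B}^\we}{{\cal B}}$ being the composite of the surjection onto ${\cal N}/{\cal B}$ with its inclusion into $\qmod{{\cal B}^\we}{{\cal B}}$ — gives exactly the asserted four-term exact sequence. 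The main obstacle is the saturation step of the second paragraph: everything downstream depends on the identification $\Tors({\cal F}^\we/{\cal A})={\cal A}^\we/{\cal A}$ (equivalently ${\cal N}={\cal F}^\we/{\cal A}^\we$), which is what makes the bottom row short exact and pins down the sheaf ${\cal N}$ of the statement; the secondary delicate point is the identification ${\cal B}^\we\cong {\cal N}^\we$ used to splice the two halves, both resting on elementary reflexivity properties of rank-$1$ sheaves on a smooth surface.
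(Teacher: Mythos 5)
Your overall architecture is sound and is essentially a reorganisation of the paper's argument (the paper packages the same computation into one $3\times3$ diagram with rows ${\cal A}\hookrightarrow{\cal A}^\we$, ${\cal F}\hookrightarrow{\cal F}^\we$, ${\cal B}\to{\cal N}$, plus a second snake--lemma diagram identifying $\coker({\cal B}\to{\cal N})$ with $\ker\big(\qmod{{\cal B}^\we}{{\cal B}}\to\qmod{{\cal N}^\we}{{\cal N}}\big)$, where you splice two short exact sequences). However, your proof of the saturation claim --- the step you yourself single out as the main obstacle --- is broken. The implication ``${\cal A}''/{\cal A}^\we$ is torsion, whence ${\cal A}''^\we=({\cal A}^\we)^\we$'' is false: an inclusion of rank~$1$ torsion-free sheaves with torsion quotient induces an isomorphism on biduals only when that quotient is supported in codimension $\geq 2$; if it is supported on a divisor the biduals genuinely differ (already $z{\cal O}\subset{\cal O}$ on $\C^2$ is a counterexample). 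The tell is that your argument for saturation never invokes the hypothesis that ${\cal B}={\cal F}/{\cal A}$ is torsion-free, yet the conclusion is false without it: take ${\cal F}={\cal O}^{\oplus 2}$ and ${\cal A}$ the image of ${\cal O}\to{\cal O}^{\oplus 2}$, $f\mapsto (zf,0)$. Then ${\cal A}$ is already reflexive, so ${\cal A}^\we={\cal A}$, but ${\cal F}^\we/{\cal A}^\we\simeq({\cal O}/z{\cal O})\oplus{\cal O}$ has torsion supported on the divisor $\{z=0\}$. So no purely formal argument of the kind you give can establish saturation.

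The repair is short but must use the hypothesis: outside the $0$-dimensional set $\supp\big(\qmod{{\cal F}^\we}{{\cal F}}\big)\cup\supp\big(\qmod{{\cal A}^\we}{{\cal A}}\big)$ one has ${\cal F}={\cal F}^\we$ and ${\cal A}={\cal A}^\we$, hence ${\cal F}^\we/{\cal A}^\we={\cal B}$ there, which is torsion-free by hypothesis; therefore $\tau=\Tors({\cal F}^\we/{\cal A}^\we)$ is supported on a finite set, ${\cal A}''/{\cal A}^\we$ is $0$-dimensional, and only then does ${\cal A}''^\we={\cal A}^\we$ (and hence the contradiction) follow. This is exactly where the paper's proof puts the weight: its ${\cal L}=\ker({\cal F}^\we\to{\cal N})$ is reflexive and is shown to coincide with ${\cal A}$ off $\supp\big(\qmod{{\cal F}^\we}{{\cal F}}\big)$ precisely because ${\cal E}/{\cal A}$ coincides with the torsion-free sheaf ${\cal B}$ there, whence ${\cal L}={\cal A}^\we$. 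With this step repaired, your remaining two paragraphs (the snake lemma applied to the two rows, the identification ${\cal B}^\we\simeq{\cal N}^\we$ from the $0$-dimensionality of ${\cal N}/{\cal B}$, and the splice via the third isomorphism theorem) are correct and give the stated four-term sequence.
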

\begin{proof}
Put  ${\cal E}:={\cal F}^\we$, ${\cal Q}:={\cal E}/{\cal F}$, denote by $q:{\cal E}\to {\cal N}$ the canonical epimorphism, and note that the kernel ${\cal L}:=\ker(q)$ is reflexive and contains ${\cal A}$. Moreover, on $X\setminus\supp({\cal Q})$ the sheaf ${\cal E}$ coincides with ${\cal F}$, ${\cal E}/{\cal A}$ coincides with ${\cal B}$ (hence is torsion-free), ${\cal N}$ coincides with ${\cal B}$, and ${\cal L}$ coincides with ${\cal A}$. Since $\mathrm{codim}(\supp({\cal Q}))\geq 2$, it follows that ${\cal L}$ coincides with ${\cal A}^\we$. We get a commutative diagram with exact rows and exact columns:
$$
\begin{diagram}[s=6mm, w=6mm, midshaft]
&&0&& 0 &&0&&\\
&&\dTo&&\dTo &&\dTo&&\\
0&\rTo&{\cal A}&\rInto^{\alpha} &{\cal L}={\cal A}^\we&\rOnto&{\cal S}&\rTo&0\\
&&\dInto^{a}&&\dInto^{j} &&\dTo^{u}&&\\
0&\rTo &{\cal F}&\rInto^{\iota} &{\cal E} &\rOnto^{\pi} &{\cal Q} &\rTo &0\\
&&\dOnto^{b}&&\dOnto^{q} &&\dTo^{v}&&\\
0&\rTo&{\cal B}&\rTo^{\beta}&{\cal N} &\rOnto&{\cal T}&\rTo&0\\
&&\dTo&&\dTo &&\dTo&&\\
&&0&&0 &&0&&\\
\end{diagram}
$$
The morphism 
$$\beta:{\cal B}=\qmod{{\cal F}}{{\cal A}}\to \qmod{{\cal E}}{{\cal L}}={\cal N}$$
induced by the pair $(\alpha,\iota)$ is injective, because  its kernel, being supported on $\supp({\cal Q})$, is a torsion sheaf. On the other hand ${\cal B}$ is torsion-free, by hypothesis.  

Putting  ${\cal S}:=\coker(\alpha)$, ${\cal Q}:=\coker(\iota)$,
 ${\cal T}:=\coker(\beta)$, and denoting by $u$, $v$ the morphisms induced by $j$ and $q$, we see that the sequence
$$0\to {\cal S}\textmap{u} {\cal Q}\textmap{v} {\cal T}\to 0
$$
is exact, as claimed. Since ${\cal S}$ is a torsion sheaf, and ${\cal N}$ is torsion-free, we also have ${\cal S}={\cal T}ors({\cal E}/{\cal A})$. Note that the morphism $\tilde \beta:{\cal B}^\we\to{\cal N}^\we$ induced by $\beta$ is an isomorphism because $\beta$ is an isomorphism on $X\setminus\supp({\cal Q}$), and that the morphism $\bar \beta:\qmod{{\cal B}^\we}{{\cal B}}\to \qmod{{\cal N}^\we}{{\cal N}}$ induced by the pair $(\beta,\tilde\beta)$ is an epimorphism. Applying the snake lemma to the diagram
$$
\begin{diagram}[s=6mm, w=6mm, midshaft]
&&0 &&0 &&  &&\\
&&\dTo&&\dTo && &&\\
0 &\rTo& {\cal B}&\rTo & {\cal B}^\we&\rTo & \qmod{{\cal B}^\we}{{\cal B}}&\rTo& 0\\
&&\dTo^{\beta}&&\dTo^{\tilde\beta}_{\simeq}&&\dTo^{\bar\beta}\\
0&\rTo&{\cal N}&\rTo& {\cal N}^\we &\rTo&\qmod{{\cal N}^\we}{{\cal N}}&\rTo &0\\
&&\dTo&&\dTo&&\dTo&&\\
&&{\cal T}&&0&&0&&\\
&&\dTo&& && && &&\\
&&0
\end{diagram}
$$
we obtain an isomorphism ${\cal T}=\coker(\beta)\simeq \ker(\bar\beta)$, which proves the claim.
\end{proof}

\begin{co}\label{sigma}
In the conditions and with the notations of Proposition 	\ref{ExactSeq} suppose that ${\cal F}$ is properly  semi-stable,  that ${\cal A}$ is a destabilising rank 1 subsheaf of ${\cal F}$, and let ${\cal B}:=\qmod{{\cal F}}{{\cal A}}$.
\begin{enumerate}
\item If the quotient ${\cal B}$ is 	torsion-free, then one has
$$\sigma({\cal F})=\Supp({\cal A}^\we/{\cal A})+\Supp({\cal B}^\we/{\cal B}).
$$
\item If the torsion subsheaf ${\cal T}ors({\cal B})$ is non-trivial, then 
\begin{enumerate}
\item The support of ${\cal T}ors({\cal B})$  is 0-dimensional. 
\item One has 
$$\Supp({\cal T}ors({\cal B}))\leq \Supp\big({\cal A}^\we/{\cal A}\big).$$ 
\item Putting ${\cal B}_0:={\cal B}/{\cal T}ors({\cal B})$, one has
$$\sigma({\cal F})=\Supp({\cal A}^\we/{\cal A})+\Supp({\cal B}^\we_0/{\cal B}_0)-\Supp({\cal T}ors({\cal B})).
$$
\end{enumerate}
\end{enumerate}
 \end{co}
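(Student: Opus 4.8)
The plan is to deduce everything from Proposition~\ref{ExactSeq}, after first pinning down how $\sigma({\cal F})$ is computed under the present hypotheses. The first step is to check that the bidual ${\cal F}^\we$ is properly semi-stable and that ${\cal A}^\we$ destabilises it. Since ${\cal F}$ and ${\cal F}^\we$ agree outside the $0$-dimensional set $\supp({\cal F}^\we/{\cal F})$, they have the same degree, and for any rank $1$ subsheaf ${\cal L}'\subset{\cal F}^\we$ the intersection ${\cal L}'\cap{\cal F}\subset{\cal F}$ satisfies $\deg_g({\cal L}'\cap{\cal F})=\deg_g({\cal L}')$ (they differ by $0$-dimensional torsion); thus semi-stability of ${\cal F}$ forces semi-stability of ${\cal F}^\we$. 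As a rank $1$ sheaf has the same degree as its reflexive hull, the saturation ${\cal A}^\we$ of ${\cal A}$ in ${\cal F}^\we$ obeys $\deg_g({\cal A}^\we)=\deg_g({\cal A})=\frac{1}{2}\deg_g({\cal F}^\we)$, so ${\cal F}^\we$ is properly semi-stable with destabilising sub-line-bundle ${\cal A}^\we$. Invoking the well-definedness of $\sigma$ (the Remark after the Definition), I would then compute it from the sequence $0\to{\cal A}^\we\to{\cal F}^\we\to{\cal N}\to 0$, in which ${\cal N}={\cal F}^\we/{\cal A}^\we$ is exactly the sheaf of Proposition~\ref{ExactSeq} (there ${\cal A}^\we=\ker q$). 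Writing ${\cal N}={\cal N}^\we\otimes{\cal I}_Z$ and noting $\Supp({\cal O}_Z)=\Supp({\cal N}^\we/{\cal N})$, this gives
\[
\sigma({\cal F})=\Supp({\cal F}^\we/{\cal F})+\Supp({\cal N}^\we/{\cal N}).
\]

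\emph{Part (1).} When ${\cal B}$ is torsion-free, Proposition~\ref{ExactSeq} applies directly, and taking stalkwise lengths in its four-term exact sequence (additivity of length) yields $\Supp({\cal F}^\we/{\cal F})=\Supp({\cal A}^\we/{\cal A})+\Supp({\cal B}^\we/{\cal B})-\Supp({\cal N}^\we/{\cal N})$. Substituting into the displayed formula for $\sigma({\cal F})$ gives the claim.

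\emph{Part (2).} Here the main work is to reduce to the torsion-free-quotient case by saturation. I would let ${\cal A}'\subset{\cal F}$ be the preimage of $\Tors({\cal B})$, so that ${\cal A}'/{\cal A}=\Tors({\cal B})$ and ${\cal B}_0={\cal F}/{\cal A}'$ is torsion-free. Because ${\cal F}/{\cal A}\hookrightarrow{\cal F}^\we/{\cal A}$ is injective, torsion elements of ${\cal F}/{\cal A}$ map to torsion elements of ${\cal F}^\we/{\cal A}$, whence ${\cal A}\subseteq{\cal A}'\subseteq{\cal A}^\we$. Consequently $\Tors({\cal B})={\cal A}'/{\cal A}$ is a subsheaf of the $0$-dimensional sheaf ${\cal A}^\we/{\cal A}$, which proves (a) and the inequality in (b). For (c), since ${\cal A}\subseteq{\cal A}'\subseteq{\cal A}^\we$ forces ${\cal A}'^\we={\cal A}^\we$, the sheaf attached by Proposition~\ref{ExactSeq} to the pair $({\cal A}',{\cal B}_0)$ is again ${\cal N}$; applying that proposition to $({\cal A}',{\cal B}_0)$ gives $\Supp({\cal F}^\we/{\cal F})=\Supp({\cal A}^\we/{\cal A}')+\Supp({\cal B}_0^\we/{\cal B}_0)-\Supp({\cal N}^\we/{\cal N})$. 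Combining with $\Supp({\cal A}^\we/{\cal A}')=\Supp({\cal A}^\we/{\cal A})-\Supp(\Tors({\cal B}))$, obtained from $0\to{\cal A}'/{\cal A}\to{\cal A}^\we/{\cal A}\to{\cal A}^\we/{\cal A}'\to 0$, and adding $\Supp({\cal N}^\we/{\cal N})$, I recover (c).

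\emph{Main obstacle.} The length computations are routine once Proposition~\ref{ExactSeq} is in hand; the delicate point is the bookkeeping in Part (2): one must recognise that $\Tors({\cal B})={\cal A}'/{\cal A}$ sits inside ${\cal A}^\we/{\cal A}$ and, crucially, that passing from ${\cal A}$ to its saturation ${\cal A}'$ leaves ${\cal N}$ (hence the correction term $\Supp({\cal N}^\we/{\cal N})$) unchanged, so that it cancels against the identical term entering $\sigma({\cal F})$. A secondary subtlety is the reduction of $\sigma({\cal F})$ to the destabilising sequence with ${\cal L}={\cal A}^\we$, which relies on both the invariance of degree under reflexive hull and the freedom in the choice of destabilising sequence guaranteed by the well-definedness Remark.
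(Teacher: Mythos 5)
Your overall strategy coincides with the paper's: both reduce everything to Proposition \ref{ExactSeq} via the preimage ${\cal A}'$ of ${\cal T}ors({\cal B})$, and your treatment of part (1), of the identity $\Supp({\cal A}^\we/{\cal A})=\Supp({\cal A}^\we/{\cal A}')+\Supp({\cal T}ors({\cal B}))$, and of part (2)(c) is the same bookkeeping the paper carries out. However, there is a genuine gap at the start of part (2), and it propagates to (a), (b) and (c). You argue that since ${\cal F}/{\cal A}\hookrightarrow{\cal F}^\we/{\cal A}$ is injective, torsion maps to torsion, ``whence ${\cal A}\subseteq{\cal A}'\subseteq{\cal A}^\we$''. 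But the torsion subsheaf of ${\cal F}^\we/{\cal A}$ is ${\cal K}/{\cal A}$, where ${\cal K}$ is the \emph{saturation} of ${\cal A}$ in ${\cal F}^\we$, and in general the saturation strictly contains the reflexive hull: one only has ${\cal A}^\we\subseteq{\cal K}$, with equality precisely when ${\cal K}/{\cal A}$ is $0$-dimensional --- which is essentially what (a) asserts. So your argument only yields ${\cal A}'\subseteq{\cal K}$ and is circular as a proof of (a). For instance, for ${\cal F}={\cal O}^2$ and ${\cal A}={\cal O}(-C)$ embedded via $x\mapsto(sx,0)$ with $s$ cutting out a curve $C$, one has ${\cal A}^\we/{\cal A}=0$ while ${\cal T}ors({\cal F}/{\cal A})\simeq{\cal O}_C$ is $1$-dimensional; of course this ${\cal A}$ is not destabilising, which shows that (a) is not a formal consequence of the sheaf theory but must use semi-stability --- and your part (2) never invokes it.

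The missing step is the paper's degree argument: from $0\to{\cal A}\to{\cal A}'\to{\cal T}ors({\cal B})\to0$ one gets $\deg_g({\cal A}')=\deg_g({\cal A})+\deg_g({\cal T}ors({\cal B}))$ with $\deg_g({\cal T}ors({\cal B}))\ge 0$; since $\deg_g({\cal A})=\frac{1}{2}\deg_g({\cal F})$ and ${\cal F}$ is semi-stable and contains ${\cal A}'$, this forces $\deg_g({\cal T}ors({\cal B}))=0$, hence $0$-dimensional support. Once this is in place, ${\cal A}'\subseteq{\cal A}^\we$, ${\cal A}'^\we={\cal A}^\we$ and the exact sequence $0\to{\cal A}'/{\cal A}\to{\cal A}^\we/{\cal A}\to{\cal A}^\we/{\cal A}'\to0$ all hold, and the rest of your computation goes through exactly as in the paper. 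The same issue touches your preliminary reduction: to exhibit $0\to{\cal A}^\we\to{\cal F}^\we\to{\cal N}\to0$ as a destabilising sequence in the sense of the definition of $\sigma$, you need ${\cal N}={\cal F}^\we/{\cal A}^\we$ torsion-free, i.e.\ ${\cal A}^\we$ saturated in ${\cal F}^\we$, which again follows only from the degree argument applied to the saturation ${\cal K}$.
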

 \begin{proof}
 The first statement follows directly from 	Proposition \ref{ExactSeq} using the additivity  of $\Supp$ with respect to exact sequences of 0-dimensional sheaves. 
 
 To prove (2)(a), note that the  pre-image    of ${\cal T}ors({\cal B})$ under the quotient map ${\cal F} \to {\cal B}$ is  a rank 1-subsheaf ${\cal A}' \subset {\cal F}$ which fits in a short exact sequence
$$0\to {\cal A}\to {\cal A}'\to {\cal T}ors({\cal B})\to 0.
$$
This shows that $\deg_g({\cal A}')=\deg_g({\cal A})+\deg_g({\cal T}ors({\cal B}))$ with  $\deg_g({\cal T}ors({\cal B}))\geq 0$. Since $\deg_g({\cal A})=\frac{1}{2}\deg_g({\cal F})$, ${\cal F}$ is semi-stable and contains ${\cal A}'$, we get $\deg_g({\cal T}ors({\cal B}))=0$, which implies that the support of ${\cal T}ors({\cal B})$ is 0-dimensional.

For  (2)(b), using the snake lemma as in the proof of Proposition \ref{ExactSeq}, we obtain  an isomorphism
$$\ker \big(\qmod{{\cal A}^\we}{{\cal A}}\longtwoheadrightarrow  \qmod{{\cal A}'^\we}{{\cal A}'}\big)\simeq \coker({\cal A}\hookrightarrow {\cal A}')= {\cal T}ors({\cal B}),
$$
which gives $\Supp({\cal A}^\we/{{\cal A}})=\Supp({\cal A}'^\we/{{\cal A}'})+\Supp({\cal T}ors({\cal B}))$.

Finally (2)(c) follows from (1) applied to the subsheaf ${\cal A}'\subset {\cal F}$.

\end{proof}

Recall that a reflexive rank 1 sheaf on a smooth manifold is locally free. In particular, if $\mathscr{F}$ is torsion-free of rank 1 on $S\times X$ with $S$ smooth, we  obtain a sheaf monomorphism  $\mathscr{F}\hookrightarrow \mathscr{F}^\we$, where $\mathscr{F}^\we$ is locally free of rank 1. Therefore $\mathscr{F}$ can be identified with $\mathscr{F}^\we\otimes{\cal I}_Z$ for a complex subspace of $S\times X$ which has codimension $\geq 2$ at any point.  In particular $\supp(Z)$   coincides with {\it the bad set} $B(\mathscr{F})$ of $\mathscr{F}$ defined as the locus of points where ${\cal F}$ fails to be locally free \cite[section 4.4]{GR}.
   
 \begin{lm}\label{equiv} Let $X$ be a complex surface, $S$ be a complex manifold, and $\mathscr{F}$ be a torsion-free rank 1 coherent sheaf on $S\times X$, flat over $S$. Denote by ${\cal F}_s$ the restriction of $\mathscr{F}$ to $X_s:=\{s\}\times X$. Let $Z\subset S\times X$ be the complex subspace of $S\times X$   defined by the equality
 $$\mathscr{F}=\mathscr{F}^\we\otimes{\cal I}_Z. $$
  Then  
 \begin{enumerate}
 	
\item For any point $s\in S$  the sheaf ${\cal F}_s$ is a rank 1 sheaf on $X_s$.
 \item 	For any point $s\in S$ the intersection $X_s\cap \supp(Z)$ is a proper analytic subset of $X_s$, and 
$$\supp({\cal T}ors({\cal F}_s))\subset B({\cal F}_s)= X_s\cap \supp(Z).$$
 \item  The following two conditions are equivalent
 \begin{enumerate}
 \item 	$\supp({\cal T}ors({\cal F}_s))$ is either empty or 0-dimensional,
 \item $X_s\cap Z$ is either empty or 0-dimensional.
 \end{enumerate}
 \item The set
 $$S_0:=\{s\in S|\ {\cal F}_s\hbox{ is torsion-free or has 0-dimensional torsion}\}
 $$
 is Zariski open and dense in $S$.
  \end{enumerate}
\end{lm}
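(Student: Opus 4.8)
The plan is to reduce the whole statement to the ideal sheaf of $Z$ and to exploit that flatness over the smooth base $S$ is a regular-sequence (depth) condition that survives restriction to a fibre $X_s$. Since $\mathscr{F}^\we$ is locally free of rank $1$, on a small polydisc I may trivialise it and identify $\mathscr{F}$ with ${\cal I}_Z$, so that $S$-flatness of $\mathscr{F}$ is the same as $S$-flatness of ${\cal I}_Z$. Write $d=\dim S$, fix $p=(s,x)$, set $A:={\cal O}_{S\times X,p}$ (regular local of dimension $d+2$, as $X$ is smooth), and let $t_1,\dots,t_d\in A$ be the pullback of a regular system of parameters of ${\cal O}_{S,s}$, so that $B:=A/(t_1,\dots,t_d)={\cal O}_{X_s,x}$ is regular of dimension $2$. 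By the local flatness criterion over a regular base, $S$-flatness of ${\cal I}_Z$ means exactly that $t_1,\dots,t_d$ is an ${\cal I}_{Z,p}$-regular sequence for every $p$; hence $\mathrm{depth}_A{\cal I}_{Z,p}\ge d$ and, by Auslander--Buchsbaum, $\mathrm{pd}_A{\cal I}_{Z,p}\le 2$. Since $\mathscr{F}^\we$ is $S$-flat, restricting $0\to\mathscr{F}\to\mathscr{F}^\we\to\mathscr{F}^\we/\mathscr{F}\to 0$ to the fibre $X_s$ collapses the $\mathrm{Tor}$-sequence to
$$0\to\ker\phi\to{\cal F}_s\xrightarrow{\ \phi\ }\mathscr{F}^\we|_{X_s}\to{\cal O}_{Z_s}\to 0,$$
where $Z_s:=Z\cap X_s$, the last term (twisted by $\mathscr{F}^\we|_{X_s}$) is supported on $Z_s$, and $\ker\phi$, the fibre $\mathrm{Tor}_1$, is supported on $Z_s$ as well.

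For $(1)$ and $(2)$ I would first exclude the degenerate case $Z_s=X_s$. If $\supp(Z)\supseteq X_s$ near $p$, then ${\cal I}_{Z,p}\subseteq\mathfrak{q}:=(t_1,\dots,t_d)$, a prime of height $d$; localising at $\mathfrak{q}$, the sequence $t_1,\dots,t_d$ generates $\mathfrak{q}A_\mathfrak{q}$ and is ${\cal I}_{Z,\mathfrak{q}}$-regular, so ${\cal I}_{Z,\mathfrak{q}}$ is maximal Cohen--Macaulay over the regular local ring $A_\mathfrak{q}$, hence free, hence principal; by Krull's height theorem it then has height $1$, contradicting $\mathrm{codim}(Z)\ge 2$. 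Thus $Z_s$ is a proper analytic subset of $X_s$, ${\cal F}_s$ agrees with the line bundle $\mathscr{F}^\we|_{X_s}$ on the dense open $X_s\setminus Z_s$ and so has rank $1$, which is $(1)$; moreover $\ker\phi$ is now a torsion subsheaf of ${\cal F}_s$ with torsion-free quotient $\im\phi$, whence $\ker\phi={\cal T}ors({\cal F}_s)$. For the equality $B({\cal F}_s)=X_s\cap\supp(Z)$ in $(2)$: off $\supp(Z)$ the sheaf $\mathscr{F}$ is locally free, so $\phi$ is an isomorphism there and $B({\cal F}_s)\subseteq X_s\cap\supp(Z)$; conversely, for $p\in\supp(Z)$ the ideal ${\cal I}_{Z,p}$ is proper, nonzero and of height $\ge 2$ in the regular local domain $A$, hence non-principal and non-free, so $\mathrm{pd}_A{\cal I}_{Z,p}\ge 1$. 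Since the regular-sequence property keeps a minimal free $A$-resolution exact and minimal after $\otimes_A B$, we get $\mathrm{pd}_B({\cal F}_{s,x})=\mathrm{pd}_A{\cal I}_{Z,p}\ge 1$ with ${\cal F}_{s,x}={\cal I}_{Z,p}\otimes_A B$, so ${\cal F}_s$ is not locally free at $x$; this gives the reverse inclusion and, since torsion lives in the non-locally-free locus, the inclusion $\supp({\cal T}ors({\cal F}_s))\subseteq B({\cal F}_s)$.

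For $(3)$, the implication (b)$\Rightarrow$(a) is immediate from $\supp({\cal T}ors({\cal F}_s))\subseteq\supp(Z_s)$. For (a)$\Rightarrow$(b) I argue by contradiction: if $Z_s$ had a $1$-dimensional component $C$, then by (a) the sheaf ${\cal F}_s$ would be torsion-free at a general point of $C$, hence locally free there (a torsion-free rank-$1$ sheaf on the smooth surface $X_s$ is locally free off a finite set), contradicting $C\subseteq\supp(Z_s)=B({\cal F}_s)$. For $(4)$, part $(3)$ identifies $S\setminus S_0$ with the set of $s$ over which the projection $\pi\colon Z\to S$ has a fibre of dimension $\ge 1$. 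The locus $Z_{\ge1}\subseteq Z$ of points with fibre-dimension $\ge 1$ is analytic by upper semicontinuity of fibre dimension, and $\pi$ is proper because $X$ is compact; so Remmert's proper mapping theorem makes $S\setminus S_0=\pi(Z_{\ge1})$ a closed analytic subset, i.e.\ $S_0$ is Zariski open. Finally $\mathrm{codim}(Z)\ge 2$ gives $\dim Z\le d$, so $\dim\pi(Z_{\ge1})\le\dim Z_{\ge1}-1\le d-1<\dim S$; hence $\pi(Z_{\ge1})$ is a proper subset and $S_0$ is dense.

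The hard part is the equality $B({\cal F}_s)=X_s\cap\supp(Z)$ together with the exclusion of $Z_s=X_s$: both concern how the rank-$1$ sheaf may degenerate on a special fibre, and both are forced by the single structural fact that $S$-flatness turns $t_1,\dots,t_d$ into an ${\cal I}_Z$-regular sequence, which through projective dimension and Krull's theorem prevents the bad locus from either vanishing or filling an entire fibre. The remaining steps are formal manipulation of the four-term sequence together with a dimension count.
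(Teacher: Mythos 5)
Your proof is correct, and its skeleton agrees with the paper's: reduce to ${\cal I}_Z$, restrict the defining short exact sequence to $X_s$ to obtain the four-term sequence and identify ${\cal T}ors({\cal F}_s)$ with the ${\cal T}or_1$ term, and prove (4) by fibre-dimension semicontinuity on $Z\to S$, properness, and a dimension count. The two pivotal steps, however, are argued by genuinely different means. For (2), the paper simply notes that $\dim\mathscr{F}(s,x)=\dim{\cal I}_Z(s,x)\geq 2$ at every point of $Z$ and that $\mathscr{F}(s,x)={\cal F}_s(x)$, so the fibre dimension of the rank-$1$ sheaf ${\cal F}_s$ jumps exactly on $X_s\cap Z$; your route via the local flatness criterion (flatness over the smooth base $\Leftrightarrow$ the $t_i$ form a regular sequence), Auslander--Buchsbaum, and the preservation of minimal free resolutions under $\otimes_A B$ reaches the same equality $B({\cal F}_s)=X_s\cap\supp(Z)$ with more commutative algebra, but with the bonus that the exclusion of $Z_s=X_s$ -- which the paper leaves implicit in its rank computation for (1) -- is made fully explicit through the maximal Cohen--Macaulay/Krull argument. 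The more substantial divergence is in (3)(a)$\Rightarrow$(b): the paper works along a putative $1$-dimensional component $Y$ of $X_s\cap Z$ and, away from the finite torsion support, uses the flatness criterion of \cite[p. 150]{Mat} to deduce that ${\cal O}_Z$ is $S$-flat there, so that invariance of fibre dimension under flat maps contradicts $\mathrm{codim}(Z)\geq 2$; you instead stay entirely on the fibre and contradict $C\subseteq B({\cal F}_s)$ via the standard fact that a torsion-free rank-$1$ sheaf on a smooth surface is locally free outside a finite set. Your fibrewise argument is more elementary and transparent; the paper's has the advantage of not invoking the structure theory of rank-$1$ sheaves on surfaces and of rehearsing exactly the flatness technique reused in the proof of Proposition \ref{newsigma}.
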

\begin{proof} (1) Since $\mathscr{F}$ is a rank 1 sheaf, flat over $S$, it follows that for any $s\in S$ the restriction ${\cal F}_s$ of $\mathscr{F}$ to $X_s$ is a rank 1 sheaf on $X_s$. To see this, it suffices to restrict local free resolutions of ${\cal F}$ to fibres, making use of its flatness over $S$.
\vspace{2mm} \\
(2) Since  $\dim(\mathscr{F}(s,x))=\dim({\cal I}_Z(s,x))\geq 2$ for any $(s,x)\in Z$  and ${\cal F}_s$ is rank 1 sheaf on $X_s$, it follows that $X_s\cap Z$ is a proper analytic subset of $X_s$, ${\cal F}_s$ is locally free of rank 1 on $X_s\setminus (X_s\cap Z)$, and  $B({\cal F}_s)= X_s\cap Z$.  Here we have used the natural isomorphism ${\cal F}(x,s)={\cal F}_s(x)$. This proves (2). 
\vspace{2mm} \\
(3) Restricting the short exact sequence
$$0\to \mathscr{F}\to \mathscr{F}^\we\to \mathscr{F}^\we_Z\to 0
$$
to $X_s$, we get an exact sequence
$$0\to \mathscr{F}^\we\otimes {\cal T}or_1^{{\cal O}_{S\times X}}({\cal O}_Z,{\cal O}_{X_s})\to {\cal F}_s\to (\mathscr{F}^\we)_s\to \mathscr{F}^\we_{Z\cap X_s}\to 0.
$$
Using (2) we see that the image of the monomorphism 
$$\mathscr{F}^\we\otimes {\cal T}or_1^{{\cal O}_{S\times X}}({\cal O}_Z,{\cal O}_{X_s})\to {\cal F}_s$$
  is precisely ${\cal T}ors({\cal F}_s)$. 
\vspace{-1mm} \\

Suppose that $(a)$ holds but $(b)$ does not, i.e. $X_s\cap  Z$ contains a 1-dimensional irreducible component $Y$. The sheaf morphism ${\cal F}_s\to \{\mathscr{F}^\we\}_s$ is a monomorphism  at any point  $(s,x)\in Y\setminus \supp({\cal T}ors({\cal F}_s))$. Using the short exact sequence
$$0\to \mathscr{F}=\mathscr{F}^\we\otimes{\cal I}_Z\to \mathscr{F}^\we\to \mathscr{F}^\we_Z\to 0
$$
and the flatness criterion \cite[p. 150]{Mat}, it follows  that $\mathscr{F}^\we_Z$ (or equivalently ${\cal O}_Z$) is flat over $S$ at any point  $(s,x)\in Y\setminus \supp({\cal T}ors({\cal F}_s))$.  This shows that $Z$ is flat over $S$ at any such point. Using the invariance of the fibre dimension in a flat morphism, this will contradict $\mathrm{codim}_{(s,x)}(Z)\geq 2$. The implication $(b)\Rightarrow(a)$ follows from claim (2) proved above. 
\vspace{2mm} \\ 
(4) By the fibre dimension semicontinuity theorem \cite[Theorem p. 137]{Fi} applied to the projection $Z\to S$, it follows that the set $Z_1$ of points   $(s,x)\in Z$ for which $\dim_{(s,x)}(X_s\cap Z)=1$ is Zariski closed in $Z$, hence its projection on $S$ will also be  Zariski closed.   On the other hand, by the dimension formula \cite[Corollary p. 138]{Fi} and the assumption on the codimension of $Z$, it follows that this projection has codimension $\geq 1$ at any point.
\end{proof}

 \begin{pr} \label{newsigma} Let $X$ be a complex surface, $S$ be a  reduced complex space, and $\mathscr{F}$ be a torsion-free rank 1 coherent sheaf on $S\times X$, flat over $S$. Denote by ${\cal F}_s$ the restriction of $\mathscr{F}$ to $X_s:=\{s\}\times X$.  Then

 \begin{enumerate}
 \item The sets 
 \begin{align*}
 S_{\mathrm{tf}}&:=\{s\in S|\ {\cal F}_s\hbox{ is torsion-free}\},\\
 S_0&:=\{s\in S|\ {\cal F}_s\hbox{ is torsion-free or has 0-dimensional torsion}\}
 \end{align*}
 are Zariski  open, and dense in $S$.
 \item For any $s\in S_0$ one has
 $$\Supp\big({\cal T}ors({\cal F}_s)\big)\leq \Supp \big( {{\cal F}_s^0}^\we/{\cal F}_s^0\big),
 $$
 where ${\cal F}_s^0:={\cal F}_s/{\cal T}ors({\cal F}_s)$.
 \item \label{cont} The map $\Sg_{\mathscr{F}}:S_0\to S^*(X)$ given by
 $$\Sg_{\mathscr{F}}(s):=\Supp \big( {{\cal F}_s^0}^\we/{\cal F}_s^0\big)-\Supp\big({\cal T}ors({\cal F}_s)\big)
 $$
 is continuous.
  \end{enumerate}

 \end{pr}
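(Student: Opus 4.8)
The plan is to reduce the statement to the case of a smooth base and then to read off all three assertions from the structure sheaf ${\cal O}_Z$ of the subspace $Z\subset S\times X$ with $\mathscr{F}=\mathscr{F}^\we\otimes{\cal I}_Z$ furnished by Lemma \ref{equiv}. For the reduction, pick a desingularisation $\tau\colon\hat S\to S$; the pulled-back sheaf $(\tau\times\id)^*\mathscr{F}$ remains torsion-free of rank $1$ and flat over $\hat S$, and its fibre at $\hat s$ agrees, as a sheaf on $X$, with ${\cal F}_{\tau(\hat s)}$. Hence the relevant subsets for the two families satisfy $\hat S_\tf=\tau^{-1}(S_\tf)$ and $\hat S_0=\tau^{-1}(S_0)$, and the two maps $\Sg$ agree along $\tau$. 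It therefore suffices to prove (1)--(3) on the manifold $\hat S$ and then push down: Zariski-openness descends because $\tau$ is proper and surjective, so by Remmert's proper mapping theorem the images $S\setminus S_\tf$, $S\setminus S_0$ are analytic; density follows by restricting to the dense smooth locus of $S$ and invoking the smooth case; and continuity on $S_0$ will follow from continuity upstairs by properness of $\tau$, once the upstairs invariant is seen to be constant on the fibres of $\tau$. On a manifold $S$, assertion (1) for $S_0$ is Lemma \ref{equiv}(4), and for $S_\tf$ the same fibre-dimension argument applied to the (Zariski-open) flat locus of ${\cal O}_Z$ over $S$, together with generic flatness, gives Zariski-openness and density.

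For (2) I would fix $s\in S_0$ and $x\in X$ and localise. Since $\mathscr{F}^\we$ is a line bundle near $(s,x)$, twisting by it alters neither support nor colength, so the four-term sequence of Lemma \ref{equiv}(3) identifies ${\cal F}_s^0$ with $\mathscr{F}^\we\otimes{\cal I}_{Z\cap X_s}$, whence $\Supp({{\cal F}_s^0}^\we/{\cal F}_s^0)=\Supp({\cal O}_{Z\cap X_s})$, while ${\cal T}ors({\cal F}_s)\simeq{\cal T}or_1^{{\cal O}_S}({\cal O}_Z,k(s))$ by flat base change. The inequality in (2) thus reduces to the pointwise bound $\dim_\C{\cal T}or_1^{{\cal O}_S}({\cal O}_Z,k(s))_x\le\dim_\C({\cal O}_Z\otimes_{{\cal O}_S}k(s))_x$. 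The decisive observation is that flatness of $\mathscr{F}=\mathscr{F}^\we\otimes{\cal I}_Z$ is equivalent, via the sequence $0\to{\cal I}_Z\to{\cal O}_{S\times X}\to{\cal O}_Z\to 0$ and flatness of ${\cal O}_{S\times X}$ over ${\cal O}_S$, to the vanishing ${\cal T}or_{\ge 2}^{{\cal O}_S}({\cal O}_Z,k(s))=0$ for all $s$. Because $s\in S_0$, the fibre $Z\cap X_s$ is $0$-dimensional, so the projection $\pi\colon Z\to S$ is finite near $(s,x)$ and the stalk $N:=({\cal O}_Z)_{(s,x)}$ is a finite module over the regular local ring $A:={\cal O}_{S,s}$ with $\mathrm{pd}_A N\le 1$. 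A length-one free resolution $0\to A^{\beta_1}\xrightarrow{\Phi}A^{\beta_0}\to N\to 0$ then has $\beta_0=\dim_\C(N\otimes_A k)$ and $\beta_1=\dim_\C{\cal T}or_1^A(N,k)$; since $A$ is a domain and $\Phi$ is injective, $\beta_1\le\beta_0$. This is exactly (2).

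For (3) I keep this resolution. First, $|\Sg_{\mathscr{F}}(s)|=\beta_0-\beta_1=\chi({{\cal F}_s^0}^\we)-\chi({\cal F}_s)$ is locally constant, so $\Sg_{\mathscr{F}}$ maps each component of $S_0$ into a single symmetric power $S^m(X)$. Evaluating the resolution at $s$ shows that the multiplicity of $\Supp({\cal O}_{Z\cap X_s})$ at a point is $\beta_0-\rk\Phi(s)$ and that of ${\cal T}ors({\cal F}_s)$ is $\beta_1-\rk\Phi(s)$, so $\Sg_{\mathscr{F}}(s)$ is the difference of the cokernel- and kernel-cycles of $\Phi(s)$, taken with their ${\cal O}_X$-module structure. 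To see that this difference moves continuously I would invoke the Hironaka flattening theorem for ${\cal O}_Z$ over $S$: there is a proper modification $\tau\colon\hat S\to S$, isomorphic over the dense open $S_\tf$, whose flattened transform $\hat Z$ is flat over $\hat S$, hence finite flat over $\tau^{-1}(S_0)$. By Barlet's theory of cycle spaces the $0$-cycle map $\hat s\mapsto[\hat Z\cap X_{\hat s}]\in S^m(X)$ is continuous. The proof is then finished by the \emph{compensation identity} $[\hat Z\cap X_{\hat s}]=\Sg_{\mathscr{F}}(\tau(\hat s))$ for $\hat s\in\tau^{-1}(S_0)$: it shows at once that the continuous upstairs cycle is constant on the fibres of $\tau$ and that it descends to $\Sg_{\mathscr{F}}$, so continuity on $S_0$ follows by properness of $\tau$.

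The main obstacle is precisely this compensation identity. Over $S_\tf$ both sides equal $[Z\cap X_s]$ and there is nothing to prove, but at $s\in S_0\setminus S_\tf$ the flat limit $[\hat Z\cap X_{\hat s}]$ may redistribute mass among several points of $X$ near the support of ${\cal T}ors({\cal F}_s)$, and one must check, cycle by cycle on $X$, that this redistribution is compensated exactly by the correction $-\Supp({\cal T}ors({\cal F}_s))$. I expect to establish it through the local picture above, expressing both the flat limit and the cokernel/kernel cycles of $\Phi(s)$ in terms of the ${\cal O}_X$-action on $\coker\Phi(s)$ and $\ker\Phi(s)$ and matching their spectral decompositions; all remaining steps are formal reductions or immediate consequences of Lemma \ref{equiv} and the length-one resolution.
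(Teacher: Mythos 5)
Your reduction to a smooth base and your arguments for parts (1) and (2) are sound. For (2) you take a genuinely different and more economical route than the paper: flatness of ${\cal I}_Z$ over $S$ forces ${\cal T}or_2^{{\cal O}_{S,s}}\big(({\cal O}_Z)_{(s,x)},\C\big)=0$, hence a length-one free resolution $0\to A^{\beta_1}\to A^{\beta_0}\to N\to 0$ over the regular local domain $A={\cal O}_{S,s}$ (the finiteness of $N$ over $A$ near a point of $S_0$ is indeed guaranteed by the local finiteness theorem, since $Z\cap X_s$ is $0$-dimensional), and $\beta_1\le\beta_0$ follows by tensoring with the fraction field. This is correct and isolates the algebra cleanly; the paper obtains (2) only as a by-product of the flattening computation it needs for (3).

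The gap is in (3), and it is exactly where you locate it yourself: the ``compensation identity'' $[\hat Z\cap X_{\hat s}]=\Sg_{\mathscr F}(\tau(\hat s))$ is announced but not proved, and it carries the entire content of the continuity statement. Two nontrivial things are asserted there at once: (i) that the fibre cycle of the flattened transform $\hat Z$ is constant on the fibres of $\tau$ --- a priori the flat limit of $[Z\cap X_{s'}]$, $s'\in S_{\tf}$, depends on the direction of approach to $s$ when $\dim S\ge 2$, which is precisely why one must pass to $\hat S$ at all --- and (ii) that this common value equals the Tor-corrected cycle $\Supp({\cal O}_{Z\cap X_s})-\Supp\big({\cal T}or_1({\cal O}_Z,{\cal O}_{X_s})\big)$. ``Matching spectral decompositions of $\coker\Phi(s)$ and $\ker\Phi(s)$'' cannot deliver this: $\Phi(s)$ only sees the fibre of ${\cal O}_Z$ at $s$, whereas the flat limit depends on the local structure of ${\cal O}_Z$ along arcs approaching $s$. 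The paper circumvents the difficulty by proving the identity only over one-dimensional disks $D\subset S_0$ meeting $S_{\tf}$, where Hironaka flattening amounts to discarding the components of $W$ lying over the finite set $M\subset D$, so the error sheaf ${\cal U}=\ker({\cal O}_W\to{\cal O}_{\Wg})$ has $0$-dimensional support and an explicit diagram chase gives $\Supp({\cal O}_{W\cap X_s})=\Supp({\cal T}ors({\cal G}_s))+\Supp({\cal O}_{\Wg\cap X_s})$; it then globalizes by showing that both $\resto{\Sg_{\mathscr F}}{D}$ and the Barlet map $\Sg_{\Zg}$ attached to the closure of $Z\cap(S_{\tf}\times X)$ are analytic and agree on the dense subset $D\cap S_{\tf}$. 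You would need either to reproduce this one-dimensional reduction or to supply an actual proof of your global identity; as written, step (3) is incomplete.
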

 
 \begin{proof} Replacing $\mathscr{F}$ by  its pull-back to $S'\times X$, where $S'\to S$ is a desingularisation of $S$, we may suppose that $S$ is smooth.  Note that the pull-back  $\mathscr{F}'$ of  $\mathscr{F}$  to $S'\times X$ will still be torsion-free, which follows using \cite[Lemma 5.6, p. 43]{Fu}. Since $\Sg_{\mathscr{F}'}$ is constant on the fibers of the proper map $S'\to S$, the continuity of $\Sg_{\mathscr{F}}$ will follow  from the continuity of $\Sg_{\mathscr{F}'}$. 
  \vspace{2mm} \\  
(1) The fact that $S_{\mathrm{tf}}$ is  Zariski  open and dense follows from \cite[Lemma 5.6, p. 43]{Fu}.  For $S_0$ the statement follows from Lemma \ref{equiv} above.
\\ \\
(2)  In order to prove the claim, let $s\in S_0$, and let $D\subset S_0$ be an embedded disk centered at $s$  such that $D\cap S_\tf\ne \emptyset$.  
Note that for such a disk $D$, the restriction $\mathscr{G}$ of $\mathscr{F}$ to $D\times X$ will also satisfy the assumptions of Proposition \ref{newsigma}. This follows   using \cite[Lemma 5.6, p. 43]{Fu}. 

Denote by $W\subset D\times X$ the complex subspace defined by the equality $\mathscr{G}=\mathscr{G}^\we\otimes {\cal I}_W$. Noting that $\Sg_{\mathscr{G}}=\Sg_{{\cal I}_W}$, it suffices to prove the claim for $\mathscr{G}={\cal I}_W$. Let $s\in D$. Tensoring    the short exact sequence
$$0\to {\cal I}_W\to {\cal O}_{D\times X}\to {\cal O}_W\to 0
$$
by  ${\cal O}_{X_s}$ gives
\begin{equation}\label{BES} 0\to {\cal T}or_1({\cal O}_W,{\cal O}_{X_s})\to {\cal I}_W\otimes{\cal O}_{X_s}\to {\cal O}_{X_s}\to {\cal O}_{W\cap X_s}\to 0, 
\end{equation}
hence (since we assumed $\mathscr{G}={\cal I}_W$) we get
$${\cal T}ors({\cal G}_s)={\cal T}or_1({\cal O}_W,{\cal O}_{X_s}).
$$
On the other hand, tensoring with ${\cal O}_W$ the short exact sequence
$$0\to {\cal I}_{X_s}\to {\cal O}_{D\times X}\to {\cal O}_{X_s}\to 0
$$
we get the exact sequence
$$0\to {\cal T}or_1({\cal O}_W,{\cal O}_{X_s})\to {\cal O}_W\otimes {\cal I}_{X_s}\to {\cal O}_W\to {\cal O}_{W\cap X_s}\to 0\ .
$$

Using Corollary 1 of \cite[p. 504]{Hi} we can find a proper analytic subset $M\subset D$ such that the closure $\Wg$ of $W\cap \big((D\setminus M)\times X\big)$ in $W$  is flat over $D$. The support of the kernel ${\cal U}:=\ker({\cal O}_W\to {\cal O}_{\Wg})$ is contained in the set $W\cap (M\times X)$, which is 0-dimensional by Lemma \ref{equiv} (3).  

Putting ${\cal T}:={\cal T}or_1({\cal O}_W,{\cal O}_{X_s})={\cal T}or({\cal G}_s)$,  taking into account the flatness of ${\cal O}_\Wg$ over $D$, and the fact that ${\cal I}_{X_s}$ is locally free, we get a commutative diagram
\begin{equation}\label{NiceDiag}
\begin{diagram}[s=6mm, midshaft]
&& 0 & &0 && 0 &&0  \\
&&\dTo&&\dTo &&\dTo && \dTo && \\
0 &\rTo & {\cal T}'	 &\rTo& {\cal U}\otimes {\cal I}_{X_s} &\rTo &{\cal U}& \rTo  &{\cal U}\otimes {\cal O}_{X_s}&\rTo & 0\\
&&\dTo&&\dTo &&\dTo && \dTo && \\
0 &\rTo  &{\cal T}	 &\rTo& {\cal O}_W\otimes {\cal I}_{X_s} &\rTo & {\cal O}_W &\rTo &{\cal O}_{W\cap X_s}&\rTo & 0\\
&&\dTo&&\dTo &&\dTo && \dTo && \\
0 &\rTo &0 &\rTo &{\cal O}_{\Wg}\otimes {\cal I}_{X_s} &\rTo & {\cal O}_\Wg &\rTo &{\cal O}_{\Wg\cap X_s}&\rTo & 0\\
&&&&\dTo &&\dTo && \dTo && \\
&&  & &0 && 0 &&0  \\
\end{diagram}
\end{equation}
with exact rows   and   exact columns. Here ${\cal T}'$ is defined as the kernel of the natural morphism ${\cal U}\otimes {\cal I}_{X_s} \to {\cal U}$; this shows in particular that the obvious inclusion ${\cal T}'\to {\cal T}$ is an isomorphism, hence the first  column is also exact, as claimed. We have the equalities 
\begin{equation}\label{W}
\begin{split}
\Supp \big( {{\cal G}_s^0}^\we/{\cal G}_s^0\big)=\Supp ({\cal O}_{W\cap X_s})&=\Supp({\cal U}\otimes {\cal O}_{X_s})+\Supp({\cal O}_{\Wg\cap X_s})\\
&=\Supp({\cal T}')+\Supp({\cal O}_{\Wg\cap X_s}),	
\end{split}
\end{equation}
%
%
%
where the first equality follows from (\ref{BES}), the second from the last vertical exact sequence,   and for the third we used the formula $\Supp({\cal T}')=\Supp({\cal U}\otimes {\cal O}_{X_s})$ provided by the  first horizontal exact sequence in (\ref{NiceDiag}). Recalling ${\cal T}'\simeq {\cal T}={\cal T}ors({\cal G}_s)$, this proves (2).\\ 
\\
(3) In fact we will prove that
 \vspace{2mm}\\
 {\bf Claim:} {\it If $S$ is smooth, then   $\Sg_{\mathscr{F}}$ is  analytic.}
 \\ 
 
 We prove first the analyticity of the restriction of the map $\Sg_{\mathscr{F}}$ to a disk $D\subset S_0$ with $D\cap S_{\mathrm{tf}}\ne\emptyset$ as in the proof of (2). By formula (\ref{W}) we have  
$$\Sg_{\mathscr{F}}(s)=\Sg_{\mathscr{G}}(s)=\Supp \big( {{\cal G}_s^0}^\we/{\cal G}_s^0\big)-\Supp({\cal T}ors({\cal G}_s))=\Supp({\cal O}_{\Wg\cap X_s}),
$$
for any $s\in D$. Using the flatness of $\Wg$ over $D$, we get the analyticity of the restriction $\resto{\Sg_{\mathscr{F}}}{D}$.\\

Let now $Z\subset S\times  X$ be the complex subspace of $S\times X$ defined by $\mathscr{F}= \mathscr{F}^\we\otimes{\cal I}_Z$ (see Lemma  \ref{equiv}). Using the exact sequence 
$$0\to \mathscr{F}=\mathscr{F}^\we\otimes{\cal I}_Z\to \mathscr{F}^\we\to \mathscr{F}^\we_Z\to 0
$$
 and the flatness criterion \cite[p. 150]{Mat} as in the proof of Lemma \ref{equiv} (3), we see that $Z':=Z\cap (S_{\tf}\times X)$ is a (possibly empty) subspace of $S_\tf\times X$ which is finite and flat over $S_\tf$. Note that for any $s\in S_\tf$ one has ${\cal T}ors({\cal F}_s)=0$, and
 $$\Sg_{\mathscr{F}}(s)=\Supp({\cal F}_s^\we/{\cal F}_s)=\Supp({\cal O}_{Z_s}).
 $$
This proves that the restriction $\resto{\Sg_{\mathscr{F}}}{S_\tf}$ is analytic, because it coincides with the composition of the morphism $S_\tf\to {\cal D}ou(X)$ defined by $Z'$ with the canonical morphism ${\cal D}ou(X)\to S^*(X)$ from the Douady moduli space of 0-dimensional complex subspaces to the Barlet moduli space of 0-dimensional cycles of $X$. On the other hand the closure of $Z'$ in $S\times X$ defines a (possibly trivial) cycle $\Zg$ of $S\times X$ given by
$$\Zg=\sum_{\substack{C  \hbox{ \small irred. comp. of }Z_{\mathrm{red}}  \\ C\cap Z'_{\mathrm{red}}\ne\emptyset}} n_C\ C,
$$
where $n_C$ is the multiplicity of $C\cap Z'_{\mathrm{red}}$ in $Z'$.  Note that $\Zg$ is 0-equidimensional over $S_0$. Indeed, one has $\Zg\subset Z$, and $Z$ is already 0-equidimensional over $S_0$ by Lemma \ref{equiv} (3).  Therefore  $\Zg$ defines an analytic map $\Sg_{\Zg}:S_0\to S^*(X)$ extending $\resto{\Sg_{\mathscr{F}}}{S_\tf}$ \cite[Theorem 2]{Ba}. 

We claim that for any disk $D\subset \S_0$ with $D\cap S_{\mathrm{tf}}\ne\emptyset$ one has
$$\resto{\Sg_{\mathscr{F}}}{D}=\resto{\Sg_{\Zg}}{D}.$$
Indeed these restrictions are both analytic and coincide over $D\cap  S_{\mathrm{tf}}$. Since $S_0$ is covered by such disks, this shows that $\Sg_{\mathscr{F}}=\Sg_{\Zg}$, hence $\Sg_{\mathscr{F}}$ is analytic as claimed.
 \end{proof}
\begin{pr}\label{PropSS}
Let $(X,g)$ be a Gauduchon surface, ${\cal D}$ be a holomorphic line bundle on $X$, $c\in\Z$, $U$ be a reduced complex space, and $\mathscr{F}$ be a  sheaf   on $U\times X$, flat over $U$, such that 
\begin{itemize}
\item  $\det({\cal F}_u)\simeq {\cal D}$ for any $u\in U$.
\item For any $u\in U$ the sheaf ${\cal F}_u$ on $X$ defined by the restriction of $\resto{\mathscr{F}}{\{u\}\times X}$   is torsion-free with $c_2({\cal F}_u)=c$.
\end{itemize}
Then the set
$$U^{\sgs}:=\{u\in U|\ {\cal F}_u\hbox{ is properly slope semi-stable}\}.
$$ 
is closed in $U^\ss$.
\end{pr}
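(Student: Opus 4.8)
The plan is to prove that $U^{\sgs}$ is sequentially closed in $U^{\ss}$, which suffices since $U^{\ss}$ carries the metrizable subspace topology inherited from $U$. So I would take a sequence $(u_n)_n$ in $U^{\sgs}$ converging to some $u\in U^{\ss}$ and show that ${\cal F}_u$ fails to be stable, hence $u\in U^{\sgs}$. By Remark \ref{SmoothBase} I may assume $U$ is a connected smooth manifold. Writing $\mu:=\frac12\deg_g({\cal D})$, the hypothesis $\det({\cal F}_v)\simeq{\cal D}$ gives $\mu({\cal F}_v)=\mu$ for every $v\in U$, so the entire argument is about producing in the limit a rank $1$ subsheaf of ${\cal F}_u$ of degree exactly $\mu$.

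For each $n$ I would choose, using proper semi-stability of ${\cal F}_n:={\cal F}_{u_n}$, a saturated destabilising rank $1$ subsheaf, giving an exact sequence
$$0\to {\cal L}_n\otimes{\cal I}_{\Ag_n}\to {\cal F}_n\to {\cal M}_n\otimes{\cal I}_{\Bg_n}\to 0,$$
with ${\cal L}_n,{\cal M}_n$ line bundles, $\Ag_n,\Bg_n$ $0$-dimensional subspaces, and $\deg_g({\cal L}_n)=\deg_g({\cal M}_n)=\mu$. The first genuine step is a boundedness argument: since $(u_n)_n$ eventually lies in a compact neighbourhood of $u$, the family $\{{\cal F}_{u_n}\}_n$ is bounded, so its maximal-degree subsheaves admit only finitely many topological types. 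Passing to a subsequence I may therefore take $c_1({\cal L}_n)=\lambda$ constant in $H^2(X,\Z)$. The Chern class identity $c=c_2({\cal F}_n)=\lambda\cdot\big(c_1({\cal D})-\lambda\big)+|\Ag_n|+|\Bg_n|$ then bounds $|\Ag_n|+|\Bg_n|$, hence each of the nonnegative integers $|\Ag_n|,|\Bg_n|$; after a further subsequence $|\Ag_n|=a$ is constant.

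Next I would compactify the parameter data. The $\Ag_n$ lie in the compact Douady space $X^{[a]}$ of length-$a$ subspaces, and the classes $[{\cal L}_n]$ lie in the level set $\{l\in\Pic^\lambda(X)\mid\deg_g(l)=\mu\}$, which is compact because the Gauduchon degree has compact level sets even when $\Pic^0(X)$ is noncompact (as in the examples above). Passing to a subsequence I get $\Ag_n\to\Ag_\infty$ and $[{\cal L}_n]\to[{\cal L}_\infty]$ with $c_1({\cal L}_\infty)=\lambda$ and $\deg_g({\cal L}_\infty)=\mu$. Using a Poincaré bundle on $\Pic^\lambda(X)\times X$ together with the universal ideal sheaf on $X^{[a]}\times X$, I would assemble the destabilising subsheaves into a single flat family $\mathscr{G}$ over the compact base $T:=\Pic^\lambda(X)\times X^{[a]}$, whose fibre over $s=([{\cal L}],\Ag)$ is ${\cal L}\otimes{\cal I}_{\Ag}$.

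The crux is then to pass to the limit morphism. Applying Flenner's representability theorem \cite[sec. 3.2]{Fl} to the pullbacks of $\mathscr{G}$ and $\mathscr{F}$ over $T\times U$, the function $(s,v)\mapsto\dim_\C\Hom({\cal G}_s,{\cal F}_v)$ is upper semicontinuous. At each $(s_n,u_n)$, with $s_n=([{\cal L}_n],\Ag_n)$, the destabilising inclusion forces this dimension to be $\geq 1$; since $(s_n,u_n)\to(s_\infty,u)$, upper semicontinuity yields a nonzero morphism $\varphi:{\cal L}_\infty\otimes{\cal I}_{\Ag_\infty}\to{\cal F}_u$. As $\varphi$ is a nonzero map from a rank $1$ torsion-free sheaf into the torsion-free sheaf ${\cal F}_u$, it is injective, so $\im(\varphi)$ is a rank $1$ subsheaf of ${\cal F}_u$ of degree $\deg_g({\cal L}_\infty)=\mu=\mu({\cal F}_u)$; combined with semi-stability this shows ${\cal F}_u$ is not stable, i.e. $u\in U^{\sgs}$. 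I expect the main obstacle to be precisely the extraction of this nonzero limit morphism: it forces the boundedness input that makes $T$ compact and the semicontinuity step meaningful. Note that, unlike the continuity statement of Proposition \ref{PropCont1}, here one needs no control of the torsion of the limiting quotient, and hence no appeal to the compensation principle.
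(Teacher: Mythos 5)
Your proposal follows essentially the same route as the paper: destabilising exact sequences for the ${\cal F}_{u_n}$, passage to a subsequence on which the discrete invariants $c_1({\cal L}_n)$ and $|\Ag_n|$ are constant, compactness of the relevant piece of $\Pic^{\lambda}(X)\times{\cal D}ou_{a}(X)$ (via properness of $\deg_g$ on components of $\Pic(X)$ and compactness of the Douady space), semicontinuity of $\Hom$ to produce a nonzero limit morphism ${\cal L}_\infty\otimes{\cal I}_{\Ag_\infty}\to{\cal F}_u$ of the correct degree, and the observation that this destabilises the semi-stable sheaf ${\cal F}_u$. The paper uses the semicontinuity theorem of B\u{a}nic\u{a}--Putinar--Schumacher to see that the Brill--Noether locus is closed analytic, rather than Flenner's representability theorem (which it reserves for Proposition \ref{PropCont1}), but this is an immaterial difference.

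The one step you should not wave through is the finiteness of the set of possible classes $c_1({\cal L}_n)$. You justify it by asserting that the family $\{{\cal F}_{u_n}\}_n$ is ``bounded'', so that its maximal-degree subsheaves have finitely many topological types; that is Grothendieck's lemma in the projective setting, but on a general (possibly non-K\"ahlerian) Gauduchon surface there is no off-the-shelf boundedness statement of this kind, and this finiteness is precisely what has to be proved. The paper's argument is specific to the situation at hand: since $\deg_g({\cal L}_n)=\deg_g({\cal M}_n)$, the harmonic representative of $c_1({\cal L}_n)-c_1({\cal M}_n)$ is anti-self-dual, whence $-\big(c_1({\cal L}_n)-c_1({\cal M}_n)\big)^2=\|c_1^\R({\cal L}_n\otimes{\cal M}_n^\smvee)\|_{L^2}^2\geq 0$, and the Chern class identity becomes
$$\tfrac14\|c_1^\R({\cal L}_n\otimes{\cal M}_n^\smvee)\|_{L^2}^2+|\Ag_n|+|\Bg_n|=c-\tfrac14 c_1({\cal D})^2,$$
which simultaneously confines $c_1({\cal L}_n)-c_1({\cal M}_n)$ to a finite subset of $\NS(X)$ and bounds $|\Ag_n|+|\Bg_n|$. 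Your version of the Chern class identity already presupposes $c_1({\cal L}_n)$ fixed, so it cannot substitute for this step. With that repair your argument is complete and agrees with the paper's; your closing remark that no control of the torsion of the limiting quotient is needed here, in contrast to Proposition \ref{PropCont1}, is also accurate.
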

\begin{proof}
Let $(u_n)_n$ be a sequence in $U^\sgs$ converging  to a point $u_\infty\in U^\ss$.  Put ${\cal F}_n:={\cal F}_{u_n}$.  We have to show that ${\cal F}_\infty:={\cal F}_{u_\infty}$ is  properly semi-stable, too.	Let 
$$0\to {\cal A}_n\textmap{a_n}{\cal F}_n\textmap{b_n} {\cal B}_n\to 0$$
be   a destabilising exact sequence of ${\cal F}_n$, with ${\cal A}_n$, ${\cal B}_n$ torsion-free rank 1 coherent sheaves. Put ${\cal L}_n:={\cal A}_n^\we$, ${\cal M}_n:={\cal B}_n^\we$, and let $\Ag_n$, $\Bg_n$ be 0-dimensional complex subspaces of $X$ such that ${\cal A}_n={\cal L}_n\otimes{\cal I}_{\Ag_n}$, ${\cal B}_n={\cal M}_n\otimes {\cal I}_{\Bg_n}$.

One has
$$c=c_2({\cal F}_n)=c_1({\cal L}_n)c_1({\cal M}_n)+|\Ag_n|+|\Bg_n|=$$
$$= \frac{1}{4}\left[c_1({\cal D})^2-(c_1({\cal L}_n)-c_1({\cal M}_n))^2\right]+|\Ag_n|+|\Bg_n|.
$$
 Since $\deg_g({\cal L}_n)=\deg_g({\cal M}_n)$, it follows that the Einstein constant of a  Hermitian-Einstein metric on the line bundle ${\cal L}_n^\smvee\otimes{\cal M}_n$ vanishes, and therefore the Chern form of this metric is a primitive form of type (1,1).  Therefore the harmonic representative of the cohomology class $c_1({\cal L}_n)-c_1({\cal M}_n)$ is  ASD. Endowing $H^2(X,\R)$ with the $L^2$ inner product induced by Hodge  isomorphism, we get 
\begin{equation}\label{bound}\frac{1}{4}\|c_1^\R({\cal L}_n\otimes {\cal M}_n^\smvee)\|_{L^2}^2+|\Ag_n|+|\Bg_n|=c-\frac{1}{4}c_1({\cal D})^2\ \forall n\in\N.
\end{equation}

Formula (\ref{bound}) shows in particular that there exists a finite set $H\subset \NS(X)$ such that 	$c_1({\cal L}_n) -c_1({\cal M}_n)\in H$ for any $n\in\N$. Since $c_1({\cal L}_n) +c_1({\cal M}_n)=c_1({\cal D})$ is constant, it follows that $c_1({\cal L}_n)$, $c_1({\cal M}_n)$ both vary in finite sets hence, passing to a subsequence if necessary,  there exist classes $\lg$, $\mg\in\NS(X)$ such that 
$$c_1({\cal L}_n)=\lg,\ c_1({\cal M}_n)=\mg\ \forall n\in\N.
$$

Similarly, using (\ref{bound}) we see that (passing to a smaller subsequence if necessary) there exists $\ag$, $\bg\in\N$ such that
$$|\Ag_n|=\ag,\ |\Bg_n|=\bg,\ \forall n\in\N.
$$
Denote by ${\cal D}ou_k(X)$ the Douady space of 0-dimensional complex subspaces of length $k$, and by ${\cal I}_k$ the tautological ideal sheaf on ${\cal D}ou_k(X)\times X$. On the product 
$$\Pic^\lg(X)\times {\cal D}ou_\ag(X)\times U\times X$$ consider the sheaves 
$$p_{14}^*(\mathscr{L})\otimes p_{24}^*({\cal I}_\ag),\ p_{34}^*(\mathscr{F}),
$$ 
where $\mathscr{L}$ is a Poincaré line bundle on $\Pic(X)\times X$. These sheaves are flat over $\Pic^\lg(X)\times {\cal D}ou_\ag(X)\times U$. Using a standard semicontinuity theorem for the Ext functors \cite[Satz 3, p. 147]{BPS}, it follows that the Brill-Noether set
\begin{equation}\label{BN}
BN:=\{(y,\Ag,u)\in \Pic^\lg(X)\times {\cal D}ou_\ag(X)\times U|\ \Hom({\cal L}_y\otimes {\cal I}_{\Ag}, {\cal F}_u)\ne 0\}
\end{equation}
is a (closed) analytic set in $\Pic^\lg(X)\times {\cal D}ou_\ag(X)\times U$.
Recall that the degree map  $\deg_g:\Pic(X)\to\R$ is proper on any connected component of $\Pic(X)$ (see \cite[Corollary 1.3.12]{LT}). This implies that the intersection $BN\cap\big((\deg_g\circ p_1)^{-1}(\frac{1}{2}\deg_g({\cal D}))\big)$ is proper over $U$. Put $y_n:=[{\cal L}_n]$. Since 
$$(y_n,\Ag_n, u_n)\in BN\cap\big((\deg_g\circ p_1)^{-1}(\frac{1}{2}\deg_g({\cal D}))\big)\ \forall n\in\N,
$$
and $u_n\to u_\infty$ it follows that, passing to a smaller subsequence if necessary, the sequence of triples  $(y_n,\Ag_n, u_n)$ converges  to a triple $(y_\infty,\Ag_\infty, u_\infty)$ belonging to the same intersection, in particular
\begin{equation}\label{DestLim}
\Hom({\cal L}_{y_\infty}\otimes{\cal I}_{\Ag_\infty},{\cal F}_\infty)\ne 0.	
\end{equation}
Since $\deg_g({\cal L}_{y_\infty})=\frac{1}{2}\deg_g({\cal D})$, this shows that ${\cal L}_{y_\infty}\otimes{\cal I}_{\Ag_{y_\infty}}$ destabilises ${\cal F}_\infty$. Since ${\cal F}_\infty$ is semi-stable by assumption, if follows that   $u_\infty\in U^\sgs$.
\end{proof}

\begin{re}
By the same method  we get a pair $(z_\infty,\Bg_\infty)\in\Pic^\mg(X)\times {\cal D}ou_\bg(X)$ such that 
$$\Hom({\cal F}_\infty, {\cal L}_{z_\infty}\otimes{\cal I}_{\Bg_\infty})\ne 0.
$$
More precisely, taking into account that for any $n\in \N$ we have $b_n\circ a_n=0$,  we obtain non-trivial morphisms $a_\infty: {\cal L}_{y_\infty}\otimes{\cal I}_{\Ag_{\infty}}\to {\cal F}_\infty$, $b'_\infty: {\cal F}_\infty\to  {\cal L}_{z_\infty}\otimes{\cal I}_{\Bg_\infty}$ such that $b'_\infty\circ a_\infty=0$. Note that we cannot conclude that $\im(a_\infty)=\ker(b'_\infty)$ or that $b'_\infty$ is an epimorphism.
\end{re}

\begin{re} The proof of Proposition \ref{PropSS} yields a destabilising monomorphism 
$${\cal L}_{y_\infty}\otimes{\cal I}_{\Ag_\infty}\to {\cal F}_\infty,$$
but the corresponding quotient, say ${\cal Q}_\infty$, might have torsion. If the torsion subsheaf ${\cal T}_\infty$ of ${\cal Q}_\infty$ is non-trivial, then, by Corollary \ref{sigma} (2), its support must be  0-dimensional.

\end{re}
\begin{pr}\label{PropCont1}
Under the assumptions and with the notations of Theorem \ref{CT}, the map $\Phi_{\cal F}$ is continuous on $U^\sgs$.	
\end{pr}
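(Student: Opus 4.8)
The plan is to reduce, using Proposition \ref{PropSS} (which makes $U^\sgs$ closed in $U^\ss$), to showing that for a fixed $u_\infty\in U^\sgs$ every sequence $(u_n)_n\subset U^\sgs$ with $u_n\to u_\infty$ has a subsequence along which $\Phi_{\mathscr{F}}(u_n)\to\Phi_{\mathscr{F}}(u_\infty)$. I would keep all the notation from the proof of Proposition \ref{PropSS}: after passing to a subsequence the Chern data $\lg,\mg,\ag,\bg$ are constant, one has $(y_n,\Ag_n,u_n)\to(y_\infty,\Ag_\infty,u_\infty)$, and destabilising monomorphisms $\phi_n\colon{\cal L}_{y_n}\otimes{\cal I}_{\Ag_n}\to{\cal F}_n$, chosen with \emph{torsion-free} cokernel ${\cal B}_n$ (saturate ${\cal A}_n$). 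Projectivising Flenner's linear space $\underline{{\cal H}om}_S({\cal A},{\cal B})$ over $S:=\Pic^\lg(X)\times{\cal D}ou_\ag(X)\times U$ and using its properness over $S$, I may assume in addition that $[\phi_n]\to[\phi_\infty]$ for a nonzero---hence, having a rank $1$ torsion-free source, injective---limit $\phi_\infty\colon{\cal L}_{y_\infty}\otimes{\cal I}_{\Ag_\infty}\to{\cal F}_\infty$. Since the Chern data are constant, all the points $\Phi_{\mathscr{F}}(u_n)$ and $\Phi_{\mathscr{F}}(u_\infty)$ lie in the \emph{same} stratum ${\cal M}^{\ASD}_a(E_k)\times S^{c-k}(X)$, with $k=\lg\cdot\mg$, on which the Donaldson--Uhlenbeck topology restricts to the product topology; it therefore suffices to treat the two factors separately.

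For the gauge-theoretic factor I would take biduals: $\phi_n$ induces the destabilising inclusion ${\cal L}_{y_n}\hookrightarrow{\cal F}_n^\we$, and together with the co-destabilising morphism of the Remark following Proposition \ref{PropSS} this gives $P({\cal F}_n)=[{\cal L}_{y_n}\oplus{\cal L}_{z_n}]$, and likewise $P({\cal F}_\infty)=[{\cal L}_{y_\infty}\oplus{\cal L}_{z_\infty}]$. Since $y_n\to y_\infty$ in $\Pic^\lg(X)$ and $z_n\to z_\infty$ in $\Pic^\mg(X)$ with fixed topological type, the reducible Hermite--Einstein connections of ${\cal L}_{y_n}\oplus{\cal L}_{z_n}$ converge smoothly, with no energy concentration, to that of ${\cal L}_{y_\infty}\oplus{\cal L}_{z_\infty}$; hence $\mathrm{kh}^{-1}(P({\cal F}_n))\to\mathrm{kh}^{-1}(P({\cal F}_\infty))$ in ${\cal M}^{\ASD}_a(E_k)$.

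For the bubbling factor Corollary \ref{sigma} gives, in all cases,
$$\sigma({\cal F}_u)=\Supp({\cal O}_{\Ag_u})+\Sg_{{\cal Q}_u},\qquad \Sg_{{\cal Q}_u}:=\Supp\big(({\cal Q}_u^0)^\we/{\cal Q}_u^0\big)-\Supp({\cal T}ors({\cal Q}_u)),$$
where ${\cal Q}_u:=\coker(\phi_u)$ and ${\cal Q}_u^0:={\cal Q}_u/{\cal T}ors({\cal Q}_u)$. The first term is continuous, being the image of $\Ag_u$ under the canonical map ${\cal D}ou_\ag(X)\to S^*(X)$. For the second I would restrict the universal morphism $\Phi$ over the projectivised Hom space to an irreducible component $\Sigma$ containing a subsequence of the $v_n:=(y_n,\Ag_n,u_n,[\phi_n])$ and its limit $v_\infty$, form the cokernel $\mathscr{Q}=\coker(\Phi)$ on $\Sigma\times X$ (whose fibre over each $v$ is the honest $\coker(\phi_v)$, by right-exactness, and which is generically of rank $1$ on the open locus $\{\phi_v\neq 0\}\ni v_\infty$), and apply the Hironaka flattening theorem to get a proper modification $\pi\colon\tilde\Sigma\to\Sigma$, an isomorphism over the flatness locus, whose strict transform $\tilde{\mathscr{Q}}$ is a torsion-free rank $1$ sheaf flat over $\tilde\Sigma$. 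After lifting and passing to a subsequence, $\tilde v_n\to\tilde v_\infty$ with $\tilde v_n$ in the flatness locus; Proposition \ref{newsigma} then yields $\Sg_{{\cal Q}_{u_n}}=\Sg_{\tilde{\mathscr{Q}}}(\tilde v_n)\to\Sg_{\tilde{\mathscr{Q}}}(\tilde v_\infty)$.

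The hard part will be to identify $\Sg_{\tilde{\mathscr{Q}}}(\tilde v_\infty)$ with $\Sg_{{\cal Q}_{u_\infty}}$, since the flat limit $Q^{\mathrm{fl}}:=\tilde{\mathscr{Q}}_{\tilde v_\infty}$ need not equal the honest cokernel ${\cal Q}_{u_\infty}=\coker(\phi_\infty)$. Restricting the strict-transform sequence $0\to\mathscr{T}\to\pi^*\mathscr{Q}\to\tilde{\mathscr{Q}}\to 0$ to the central fibre produces (by right-exactness, as $(\pi^*\mathscr{Q})|_{X_{\tilde v_\infty}}={\cal Q}_{u_\infty}$) a surjection ${\cal Q}_{u_\infty}\twoheadrightarrow Q^{\mathrm{fl}}$ whose kernel $\mathscr{E}$ is $0$-dimensional torsion (both sheaves having rank $1$ and the same first Chern class). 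As ${\cal Q}_{u_\infty}$ and $Q^{\mathrm{fl}}$ then share the torsion-free quotient ${\cal Q}_{u_\infty}^0$, one computes $\Sg_{Q^{\mathrm{fl}}}=\Sg_{{\cal Q}_{u_\infty}}+\Supp(\mathscr{E})$. I would eliminate $\mathscr{E}$ by conservation of total mass: flatness of $\mathscr{F}$ and constancy of the Chern data give $|\sigma({\cal F}_u)|=c_2({\cal F}_u)-c_2(P({\cal F}_u))=c-\lg\cdot\mg$, so that $|\Sg_{{\cal Q}_u}|=c-\lg\cdot\mg-\ag$ for $u=u_n$ and $u=u_\infty$ alike; comparing this with the locally constant $\N$-valued function $|\Sg_{\tilde{\mathscr{Q}}}|$ on the connected space $\tilde\Sigma$ forces $|\Supp(\mathscr{E})|=0$, whence $\mathscr{E}=0$ and $\Sg_{Q^{\mathrm{fl}}}=\Sg_{{\cal Q}_{u_\infty}}$. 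This is precisely the compensation phenomenon of Proposition \ref{newsigma}: the torsion created in the flat limit is exactly balanced by a jump of the singularity set of the bidual. Combining the two factors gives $\Phi_{\mathscr{F}}(u_n)\to\Phi_{\mathscr{F}}(u_\infty)$ in the product topology of the common stratum, which is the assertion.
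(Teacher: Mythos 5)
Your proposal follows the same overall architecture as the paper's proof: reduce to sequences via Proposition \ref{PropSS}, keep the constancy of the numerical data $\lg,\mg,\ag,\bg$, use Flenner's representability of $\underline{{\cal H}om}_S$ and the projectivisation of the resulting linear space to put the destabilising morphisms $\phi_n$ and their limit $\phi_\infty$ into a single analytic family, and then combine Corollary \ref{sigma} with the compensation principle (Proposition \ref{newsigma}) to control $\sigma$. The one place where you genuinely diverge is the treatment of the family of cokernels. The paper observes that, after choosing a local section of $H\setminus(0\hbox{-section})\to\P(H)$ near the limit point (Lemma \ref{section} --- note there is no global universal morphism over $\P(H)$, only one after such a choice, a point you gloss over), the universal morphism is fibrewise non-zero, hence fibrewise injective, so by the local flatness criterion \cite[p.~150]{Mat} the cokernel family is \emph{already flat} over the neighbourhood $\Sigma$ and its restriction to every fibre is the honest exact sequence; in particular the ``flat limit'' coincides with $\coker(\phi_\infty)$ and your ``hard part'' never arises. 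You instead flatten the cokernel family by Hironaka and then reconcile the flat limit $Q^{\mathrm{fl}}$ with ${\cal Q}_{u_\infty}$ by conservation of the total mass of $\Sg$. This does work --- the bookkeeping $|\Sg_{{\cal Q}_{u_n}}|=|\Sg_{{\cal Q}_{u_\infty}}|=c-\lg\cdot\mg-\ag=\bg$ together with the local constancy of $|\Sg_{\tilde{\mathscr{Q}}}|$ on the connected set $\tilde\Sigma_0$ is correct --- but it buys nothing and costs extra verifications, notably that the strict transform is torsion-free as a sheaf on $\tilde\Sigma\times X$ (a hypothesis of Proposition \ref{newsigma}) and that the lifted points $\tilde v_n$ converge to a lift of $v_\infty$ lying in $\tilde\Sigma_0$.

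One sub-justification must be repaired: you argue that the kernel $\mathscr{E}$ of ${\cal Q}_{u_\infty}\twoheadrightarrow Q^{\mathrm{fl}}$ is $0$-dimensional because both sheaves have rank $1$ and the same first Chern class. On a non-K\"ahlerian Gauduchon surface this inference fails: effective divisors can be homologically trivial (e.g.\ elliptic curves on Hopf surfaces), so a torsion sheaf with vanishing $c_1$ may have $1$-dimensional support. The correct reason is that $\mathscr{E}\subseteq{\cal T}ors({\cal Q}_{u_\infty})$, and the latter is $0$-dimensional by Corollary \ref{sigma}(2)(a), which uses the semi-stability of ${\cal F}_\infty$ --- a hypothesis you have in any case. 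With that repair (or, more economically, with the flatness-criterion shortcut that removes the whole detour) your argument is complete.
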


\begin{proof}
We prove that  any convergent sequence $(u_n)_n$ in $U^\sgs$  has a subsequence $(u_{n_k})_k$ such that $\lim_{k\to\infty} \Phi_{\cal F}(u_{n_k})=\Phi_{\cal F}(\lim_{n\to \infty} u_n)$. Let $(u_n)_n$ be such a sequence and $u_\infty \in U^\sgs$ be its limit.  Define 
$${\cal F}_n,\ {\cal F}_\infty,\ a_n:{\cal A}_n\to {\cal F}_n,\ b_n:{\cal F}_n\to {\cal B}_n,\ \Ag_n,\ \Bg_n,\ {\cal L}_n,\ {\cal M}_n,\ y_n$$
 as in the proof of Proposition \ref{PropSS}.
Using the same method as in the proof of this proposition,  we can find
\begin{itemize}
\item classes $\lg$, $\mg\in\NS(X)$,
\item  non-negative integers  $\ag$, $\bg\in\N$, 	
\end{itemize}
such that (passing to a subsequence if necessary) the following holds:
$$c_1({\cal L}_n)=\lg,\ c_1({\cal M}_n)=\mg,\ |\Ag_n|=\ag,\ |\Bg_n|=\bg\ \forall n\in\N.
$$
Consider again the sheaves 
$$\mathscr{A}:=p_{14}^*(\mathscr{L})\otimes p_{24}^*({\cal I}_\ag),\ \mathscr{F}':=p_{34}^*(\mathscr{F}) $$
over $\Pic^\lg(X)\times {\cal D}ou_\ag(X)\times U\times X$, where $\mathscr{L}$ is a Poincaré line bundle on $\Pic(X)\times X$. Since the sheaf $p_{34}^*(\mathscr{F})$ is flat over $S:=\Pic^\lg(X)\times {\cal D}ou_\ag(X)\times U$, it follows that the contravariant functor
$$\underline{\Hom}_S\big(\mathscr{A},  \mathscr{F}'\big) :{\cal A}n_S\to {\cal S}ets
$$
defined by 
$$(\tau:T\to S)\mapsto \Hom_{(T\times X)} \big((\tau\times\id_X)^*(\mathscr{A}),(\tau\times\id_X)^*( \mathscr{F}')\big).$$
 is representable by a linear fibre space over $S$, say $\chi:H\to S$ (see \cite[sec. 3.2]{Fl}).   
 
 Using the object $\big(\{s\}\hookrightarrow S\big)\in {\cal O}b({\cal A}n_S)$ we get for any $s\in S$ an identification $H_s=\Hom({\cal A}_s,{\cal F}'_s)$. Moreover, let $\theta:T\to H$ be a morphism, and   $\tau:=\chi\circ\theta$. Regarding $\theta$ as a morphism $\tau\to \chi$ in the category ${\cal A}n_S$ we obtain an associated   morphism 
 $$\tilde \theta\in \Hom_{(T\times X)} \big((\tau\times\id_X)^*(\mathscr{A}),(\tau\times\id_X)^*( \mathscr{F}')\big),$$
and using the commutative diagram
 $$
 \begin{diagram}[s=7mm, midshaft]
 \{t\}&\rInto &T \\
 ^{\simeq}\dTo & \rdTo^{\tau_t} & \dTo_{\tau} \\
 \{\tau(t)\}&\rInto &S
 \end{diagram}
 $$
in the category ${\cal A}n_S$, we see that for any $t\in T$ the restriction of $\tilde\theta$ to $\{t\}\times X$ coincides (via the identification $\{t\}\times X=\{\tau(t)\}\times X$) with  $\theta(t)\in H_{\tau(t)}$.\\

  Note that the support of $\chi:H\to S$ is precisely the Brill-Noether locus defined by   formula (\ref{BN}). Consider the projectivisation $h:\P(H)\to S$ of $\chi$.  The non-trivial morphism $a_n$ defines a point $[a_n]\in \P(H)$  lying over $(y_n,\Ag_n,u_n)\in S$. Using a properness argument, similar to the one used in Proposition \ref{PropSS}, we see that (passing to a subsequence if necessary) $([a_n])_n$ converges in $\P(H)$ to a point $\alpha_\infty$ lying over a triple $(y_\infty,\Ag_\infty,u_\infty)\in S$, where $\deg_g({\cal L}_{y_\infty})=\frac{1}{2}\deg_g({\cal D})$.  
 
 Let $\Sigma\subset \P(H)$ be a sufficiently small open neighborhood of $\alpha_\infty$ in $\P(H)$ such that a section $\nu:\Sigma\to H\setminus\hbox{(0-section)}$ of the canonical projection $\pi:H\setminus\hbox{(0-section)}\to\P(H)$ exists (see Lemma \ref{section} below). Regarding $\nu$ as a nowhere vanishing $H$-valued map over $S$, and using the fact that the linear space $H\to S$ represents the functor $\underline{\Hom}_S\big(\mathscr{A},  \mathscr{F}'\big)$,   we obtain a morphism 
$$\tilde\nu\in \Hom_{\Sigma\times X} \big((h_\Sigma\times\id_X)^*(\mathscr{A}),(h_\Sigma\times\id_X)^*(\mathscr{F}')\big),
$$    
where $h_\Sigma:=\resto{h}{\Sigma}$. For any $y\in\Sigma$ the restriction of $\tilde\nu$ to a fibre $\{y\}\times X$ coincides with $\nu(y)$, hence it is non-trivial. 

Put    $\tilde{\mathscr{A}}:=(h_\Sigma\times\id_X)^*(\mathscr{A})$,  $\tilde{\mathscr{F}}:=(h_\Sigma\times\id_X)^*(\mathscr{F}')$, and let 
$$0\to \tilde{\mathscr{A}}\textmap{\tilde\nu} \tilde{\mathscr{F}}\to \tilde{\mathscr{Q}}\to 0
$$
be the corresponding short exact sequence on  $\Sigma\times X$. Since   $\tilde{\mathscr{F}}$ is flat over $\Sigma$, and $\tilde\nu$ is fibrewise injective, it follows \cite[p. 150]{Mat} that $\tilde{\mathscr{Q}}$ is also flat over $\Sigma$. The restriction of this short exact sequence to $\{a_\infty\}\times X$ is
\begin{equation}\label{ESinfty}
0\to {\cal L}_{y_\infty}\otimes{\cal I}_{\Ag_\infty}\textmap{a_\infty} {\cal F}_\infty\textmap{b_\infty} {\cal Q}_\infty\to  0,	
\end{equation}
where $a_\infty$ is a representative of $\alpha_\infty$ (in other words $\alpha_\infty=[a_\infty]$), and, via   suitable identifications 
$${\cal L}_n={\cal L}_{y_n},\ {\cal Q}_n:=\qmod{{\cal F}_n}{a_n\big({\cal L}_{y_n}\otimes{\cal I}_{\Ag_n}\big)}={\cal B}_n,$$
its restriction to $\{[a_n]\}\times X$ is
$$0\to {\cal L}_{y_n}\otimes{\cal I}_{\Ag_n}\textmap{a_n} {\cal F}_n\textmap{b_n} {\cal Q}_n\to 0.
$$
 Therefore the quotient ${\cal Q}_\infty$ of ${\cal F}_\infty$ by ${\cal L}_{y_\infty}\times {\cal I}_{\Ag_\infty}$ is the ``limit" of the  sequence $({\cal B}_n)_n$ in a flat family of sheaves parameterized by $\Sigma$.

 Since $\lim_{n\to\infty} y_n=y_\infty$ the exact sequence (\ref{ESinfty}) gives
 $$P({\cal F}_\infty)=\big[{\cal L}_{y_\infty}\oplus({\cal D}\otimes{\cal L}_{y_\infty}^\smvee)\big]=\lim_{n\to \infty}\big[{\cal L}_{y_n}\oplus({\cal D}\otimes{\cal L}_{y_n}^\smvee)\big]=\lim_{n\to \infty}P({\cal F}_n).
 $$ 
On the other hand using  Corollary \ref{sigma} and the    exact sequence (\ref{ESinfty}), we obtain:
$$\sigma({\cal F}_\infty)=\Supp({\cal O}_{\Ag_\infty})+\Supp\big(\{{\cal Q}_{\infty}\}_0^\we/\{{\cal Q}_{\infty}\}_0\big)-
\Supp\big({\cal T}ors({\cal Q}_{\infty})\big).
$$
Taking into account that
$$\lim_{n\to\infty}\Ag_n= \Ag_\infty
$$
and that, by Proposition \ref{newsigma} (\ref{cont}), we have
\begin{align*}
\Supp\big(\{{\cal Q}_{\infty}\}_0^\we/\{{\cal Q}_{\infty}\}_0\big)&-
\Supp\big({\cal T}ors({\cal Q}_{\infty})\big)=\\
&=\lim_{n\to\infty} \Supp\big(\{{\cal Q}_{n}\}_0^\we/\{{\cal Q}_{n}\}_0\big)-
\Supp\big({\cal T}ors({\cal Q}_{n})\big)=\\ &=\lim_{n\to\infty} \Supp\big({\cal Q}_{n}^\we/{\cal Q}_{n}\big)=\lim_{n\to\infty} \Supp({\cal O}_{\Bg_n}),
\end{align*}
we obtain
$$\sigma({\cal F}_\infty)=\lim_{n\to\infty}\Supp({\cal O}_{\Ag_n}) + \Supp({\cal O}_{\Bg_n})=\lim_{n\to\infty} \sigma({\cal F}_n).
$$
\end{proof}
\begin{lm} \label{section} Let $S$ be a complex space, ${\cal H}$ be a coherent sheaf on $S$,  $H\to S$ the associated linear space, $H\setminus\hbox{(0-section)}$ the complement of the 0-section in $H$,  $\P(H)\to S$  be its projectivisation, and $\chi:H\setminus\hbox{(0-section)}\to\P(H)$ the canonical projection. Any point $[\zeta_0]\in \P(H)$ has an open neighborhood on which a section of $\chi$ exists. 	
\end{lm}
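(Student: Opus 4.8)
The plan is to reduce the statement to the classical fact that the tautological projection $\C^q\setminus\{0\}\to\P^{q-1}$ admits local holomorphic sections, by realizing $H$ locally as a closed linear subspace of a trivial linear space and exploiting that its fibres are genuine linear subspaces. The only point requiring care will be checking that normalizing a chosen representative keeps us inside $H$, which is precisely where the linear structure of the fibres enters.

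First I would localize around the image $s_0\in S$ of $[\zeta_0]$. On a suitable neighbourhood $V$ of $s_0$ the coherent sheaf ${\cal H}$ admits a free presentation ${\cal O}_V^p\to{\cal O}_V^q\to{\cal H}\to 0$. Applying the linear-space functor $\mathbb{V}(\cdot)=\mathrm{Specan}\,\mathrm{Sym}(\cdot)$, the epimorphism ${\cal O}_V^q\to{\cal H}$ induces a surjection of symmetric algebras, hence a closed embedding of $H$ over $V$ into $\mathbb{V}({\cal O}_V^q)=V\times\C^q$, under which each fibre $H_s$ becomes a linear subspace of $\{s\}\times\C^q$ (the annihilator, resp. kernel of the transpose, of the presentation matrix evaluated at $s$); this is the standard structure theory of linear spaces, and the fibre dimension may of course jump. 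Correspondingly, over $V$ the projectivization $\P(H)$ is the fibrewise projectivization, a closed subspace of $V\times\P^{q-1}$, and $\chi$ is the restriction to $H\setminus\hbox{(0-section)}$ of the canonical projection $(V\times\C^q)\setminus(V\times\{0\})\to V\times\P^{q-1}$, $(s,\zeta)\mapsto(s,[\zeta])$.

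Next I would build the section using a standard affine chart. Writing $\zeta_0=(\zeta_0^1,\dots,\zeta_0^q)$ for a representative, choose an index $j$ with $\zeta_0^j\neq 0$, let $U_j\subset\P^{q-1}$ be the chart $\{[w]\mid w^j\neq 0\}$, and let $\sigma_j:U_j\to\C^q\setminus\{0\}$, $[w]\mapsto w/w^j$, be the normalizing holomorphic section (its $j$-th coordinate is $1$). Set $W:=\P(H)\cap(V\times U_j)$, an open neighbourhood of $[\zeta_0]$ in $\P(H)$, and define $\nu:W\to H\setminus\hbox{(0-section)}$ by $\nu(s,[w]):=(s,\sigma_j([w]))$.

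It then remains to verify that $\nu$ is well defined and is a section. Since $(s,[w])\in\P(H)$ means the line $\C w$ lies in the linear subspace $H_s$, the normalized vector $\sigma_j([w])=w/w^j$ again lies in $H_s$, and it is nonzero because its $j$-th coordinate equals $1$; thus $\nu(s,[w])\in H\setminus\hbox{(0-section)}$. Holomorphy is immediate from the explicit formula for $\sigma_j$, and $\chi\circ\nu=\id_W$ because $\sigma_j([w])$ is proportional to $w$. I expect the only genuine subtlety, and the step worth stating explicitly, to be the observation that the fibres $H_s$ are linear subspaces (stable under scalar multiplication), since this is exactly what guarantees that rescaling a representative stays inside $H$; everything else is the classical local triviality of $\C^q\setminus\{0\}\to\P^{q-1}$.
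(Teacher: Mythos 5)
Your proof is correct and is essentially the paper's argument written in coordinates: choosing the index $j$ with $\zeta_0^j\neq 0$ and normalizing on the slice $\{w^j=1\}$ is exactly the paper's choice of a local section $\eta$ of ${\cal H}$ with $\langle\zeta_0,\eta(s_0)\rangle=1$ and its affine slice $H_\eta=\{\langle\zeta,\eta\rangle=1\}$, once one identifies $H_s$ with ${\cal H}(s)^\smvee$. The only cosmetic difference is that you pass through a local free presentation and the embedding $H\hookrightarrow V\times\C^q$, whereas the paper works intrinsically with the pairing against $\eta$; both hinge on the same point you isolate, namely that the fibres are linear so rescaling a representative stays in $H$.
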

\begin{proof} Recall that one has a canonical identification $H_{s_0}={\cal H}(s_0)^\smvee$. 
Let $S_0$ be a sufficiently small neighbourhood of $s_0$ in  $S$ and $\eta\in {\cal H}(S_0)$ be a  section   such that $\langle \zeta_0, \eta(s_0)\rangle =1$.  The condition $\langle \zeta,\eta\rangle =1$ defines a locally closed subspace $H_\eta\subset H_{S_0}^\smvee$, and the restriction $\resto{\chi}{H_\eta}:H_\eta\to \P(H)_{S_0}$ identifies $H_\eta$ biholomorphically with an open neighborhood $\Sigma_\eta$ of $[\zeta_0]$ in $\P(H)$. The inverse of this  map defines a section of $\chi$ defined on  $\Sigma_\eta$.
 \end{proof}

 \section{Sequences of stable sheaves}\label{STCase}
 
 \subsection{The difficulty}\label{Pr2sect}
 
 The goal of this section is to prove the second statement needed in the proof of Theorem \ref{CT}:
 
 \begin{pr}\label{PropCont2}
 With the hypotheses  and notations of Theorem \ref{CT}, let $u\in U^\ss$, and $(u_n)_n$ be a sequence of 	$U^\ss$ converging to $u$ such that ${\cal F}_{u_n}$ is slope stable for any $n\in\N$. Then $(u_n)_n$ has a subsequence $(u_{n_k})_k$ such that $\lim_{k\to\infty} \Phi_{\mathscr{F}}(u_{n_k})= \Phi_{\mathscr{F}}(u)$.
 \end{pr}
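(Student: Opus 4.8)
The plan is to route the statement through the Kobayashi--Hitchin correspondence and to let Uhlenbeck's weak compactness theorem produce the candidate limit; the real content of the proof is then the identification of this limit --- both its holomorphic structure and its bubbling cycle --- with the data $\big(\mathrm{kh}^{-1}(P({\cal F}_u)),\sigma({\cal F}_u)\big)$. Write ${\cal F}_n:={\cal F}_{u_n}$. Since ${\cal F}_n$ is slope stable, so is its reflexive hull ${\cal F}_n^\we$ (a destabilising rank $1$ subsheaf of ${\cal F}_n^\we$ would, by intersecting with ${\cal F}_n$, destabilise ${\cal F}_n$), which is therefore a genuine stable holomorphic $2$-bundle, and $\Phi_{\mathscr F}(u_n)=\big(\mathrm{kh}^{-1}([{\cal F}_n^\we]),\Supp({\cal F}_n^\we/{\cal F}_n)\big)$.

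First I would carry out the reductions. Set $\ell_n:=\dim_{\C}({\cal F}_n^\we/{\cal F}_n)=|\sigma({\cal F}_n)|$, so that $c_2({\cal F}_n^\we)=c-\ell_n$. The anti-self-dual connection $A_n:=\mathrm{kh}^{-1}([{\cal F}_n^\we])$ has non-negative energy, whence $c_2({\cal F}_n^\we)\ge\frac14 c_1^2(D)$ and $\ell_n$ ranges in a finite set; passing to a subsequence I may assume $\ell_n\equiv\ell$. By compactness of the Barlet space $S^\ell(X)$, a further subsequence makes $\sigma_n:=\sigma({\cal F}_n)=\Supp({\cal F}_n^\we/{\cal F}_n)$ converge to some $\sigma_\infty\in S^\ell(X)$. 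Thus all the points $\Phi_{\mathscr F}(u_n)=(A_n,\sigma_n)$ sit in the single stratum ${\cal M}^{\ASD}_a(E_{c-\ell})\times S^\ell(X)$.

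I would then apply Uhlenbeck weak compactness \cite{DK} to the ASD connections $(A_n)$: after a subsequence there are a finite set $B\subset X$, an ASD connection $A_\infty$ on some $E_{k_\infty}$ with $k_\infty\le c-\ell$, a gauge-convergence $A_n\to A_\infty$ in $C^\infty_{\mathrm{loc}}(X\setminus B)$, and a bubbling cycle $\nu=\sum_{y\in B}m_y\,y$ with $|\nu|=(c-\ell)-k_\infty$ governing the defect of the energy measures. By the definition of the Donaldson--Uhlenbeck topology this means precisely that $\Phi_{\mathscr F}(u_n)\to(A_\infty,\sigma_\infty+\nu)$, so it remains to prove $A_\infty=\mathrm{kh}^{-1}(P({\cal F}_u))$ and $\sigma_\infty+\nu=\sigma({\cal F}_u)$. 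For the first equality, on every compact $K\subset X\setminus\big(B\cup\Supp(\sigma_\infty)\cup\supp({\cal F}_u^\we/{\cal F}_u)\big)$ the sheaves ${\cal F}_n={\cal F}_n^\we$ are, for large $n$, the locally free fibres of the flat family $\mathscr F$, and they converge as holomorphic bundles to ${\cal F}_u={\cal F}_u^\we$; matching this with the $C^\infty$ convergence $\bar\partial_{A_n}\to\bar\partial_{A_\infty}$ identifies the holomorphic structure of $A_\infty$ with ${\cal F}_u^\we$ off a finite set, so that the polystable bundle $\mathrm{kh}(A_\infty)$ is the polystable representative of the $S$-equivalence class of ${\cal F}_u^\we$, which is exactly $P({\cal F}_u)$ (the split bundle ${\cal L}\oplus{\cal M}$ when ${\cal F}_u$ is properly semi-stable, and ${\cal F}_u^\we$ itself when ${\cal F}_u$ is stable).

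The remaining equality $\sigma_\infty+\nu=\sigma({\cal F}_u)$ is the heart of the matter, and I expect it to be the main obstacle. The identification $A_\infty=\mathrm{kh}^{-1}(P({\cal F}_u))$ already fixes $k_\infty=c_2(P({\cal F}_u))$, hence the total masses agree, $|\sigma_\infty+\nu|=c-k_\infty=|\sigma({\cal F}_u)|$; the difficulty is the \emph{pointwise} conservation law, namely that at each $x\in X$ the limiting double-dual multiplicity $\sigma_\infty(x)$ plus the Uhlenbeck bubbling mass $m_x$ equals $\sigma({\cal F}_u)(x)$. Note that neither need $\sigma_\infty$ coincide with $\Supp({\cal F}_u^\we/{\cal F}_u)$ nor need $\nu$ vanish, even when ${\cal F}_u$ is stable: singularities of the biduals may migrate while curvature concentrates at the same time, and in the properly semi-stable case the subscheme $Z$ of the destabilising sequence is \emph{bubbled off} precisely as part of $\nu$. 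To establish the local balance I would relate the bubbling mass $m_x$ to a holomorphic invariant --- the local drop of $c_2$, i.e. the length of a local quotient --- by a localised energy identity on small balls whose boundaries avoid $B\cup\Supp(\sigma_\infty)$, and then control this drop by realising the triples $({\cal F}_n\hookrightarrow{\cal F}_n^\we)$, together with their limit, inside a single analytic family and invoking the continuity of the cycle invariant (the compensation principle of Proposition \ref{newsigma}, applied to an auxiliary rank $1$ family extracted from the destabilising data). Passing from the given sequence to such an analytic family --- so that semicontinuity of the Barlet cycle of singularities applies --- is exactly the point left open in \cite{Morg}, and filling it is the crux of this section.
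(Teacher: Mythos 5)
Your overall framework (Kobayashi--Hitchin plus Donaldson--Uhlenbeck compactness, then identification of the limit) is the same as the paper's, and you correctly locate a difficulty in the pointwise conservation law for $\sigma$. But there is a genuine gap at the step you dispatch in one sentence: ``matching'' the holomorphic convergence ${\cal F}_n\to{\cal F}_u$ coming from the flat family with the gauge-theoretic convergence $A_n\to A_\infty$ does \emph{not} identify the holomorphic structure of $A_\infty$ with ${\cal F}_u^\we$ off a finite set. The two convergences take place in different frames: the family gives comparison isomorphisms $\psi_n:\resto{F_u}{R_\varepsilon}\to\resto{F_{u_n}}{R_\varepsilon}$, while the Uhlenbeck theorem gives unitary isomorphisms $\rho_n$ adapted to the Hermite--Einstein metrics $H_{u_n}$, and the resulting comparison maps $g_n=\nu\circ\rho_n^{-1}\circ\psi_n$ are a priori uncontrolled --- they can blow up or degenerate to zero. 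The entire content of Theorem \ref{mainTh} is to normalise these maps in $L^4$ on the complement of small balls around $\mathrm{Sing}({\cal F}_u)$ and to prove, via an elliptic estimate for $\tr(j_n^*j_n)$ together with the maximum-principle estimates on annuli of Corollary \ref{ann-est} (needed precisely to rule out $L^4$-concentration on the annular collars, which is the gap in Morgan that this section fills --- not the sequence-to-family issue, which belongs to Section \ref{PropSSCase}), that the normalised limit $g_\infty:{\cal F}_u^\we\to{\cal V}$ is a \emph{non-zero holomorphic morphism}. It is only a morphism, not an isomorphism off a finite set; indeed your stronger claim is false whenever ${\cal F}_u$ is properly semi-stable with ${\cal F}_u^\we$ a non-split extension, since two rank-$2$ bundles isomorphic off a finite set are isomorphic by Hartogs, whereas ${\cal V}$ is polystable and ${\cal F}_u^\we$ is not. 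The identification $[{\cal V}]=P({\cal F}_u)$ and the local balance $\sigma_\infty+\nu=\sigma({\cal F}_u)$ are then extracted from the kernel/cokernel analysis of $g_\infty$ and local $c_2$ bookkeeping as in \cite[sections 4.2.2, 4.2.3]{Morg}; Proposition \ref{newsigma} is not the tool used here. Without constructing and controlling $g_\infty$, your argument has no bridge between the two limits.
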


This result will be obtained as follows.  Applying the Kobayashi-Hitchin correspondence to the stable bundles ${\cal F}_{u_n}^\we$ one obtains projectively ASD unitary connections $A_n$ on the underlying differentiable bundles $F_{u_n}$. We may suppose that the isomorphism type of these bundles is constant.  Using the Donaldson-Uhlenbeck compactness theorem  we get a unitary bundle $(V,K)$ with a projectively ASD  unitary connection $A$ such that, after passing to a subsequence, the sequence of gauge classes $([A_n])_n$ converges to $[A]$ on the complement of a finite set.  The main difficulty  is dealt with in   Theorem \ref{mainTh} in this article, which replaces \cite[Theorem 4.2.3]{Morg}, and  is the construction of a {\it non-trivial}  holomorphic morphism $g_\infty: {\cal F}_u^\we\to {\cal V}$, where ${\cal V}$ is the holomorphic bundle $(V,\bar\partial_A)$. This morphism allows one to ``compare" the bundle ${\cal F}_u^\we$ with the polystable bundle ${\cal V}$, obtained as a ``limit" of the sequence of neighbouring bundles  ${\cal F}_{u_n}^\we$. 
In this section we  focus on this difficulty, leaving the rest of the proof of Proposition \ref{PropCont2} to follow   the arguments in \cite[sections 4.2.2, 4.2.3] {Morg}. 

The morphism $g_\infty$ is obtained in a 2-step procedure: 
\begin{enumerate}[S1.]
\item  Normalizing  a sequence of bundle morphisms $g_n$ with respect to  the $L^4$-norm of the    complement $R'$ of the union of sufficiently small   balls   centered at the singularities of ${\cal F}_u$.	
\item Bootstrapping and elliptic estimates to prove the convergence of the obtained sequence $(\tilde g_n)_n$  of $L^4$-normalized morphisms.
\end{enumerate}

The input estimate in the second step is an $L^\infty$-estimate, which is obtained by combining the $L^4$-bound   with   an elliptic inequality for the pointwise squared norm $|\tilde g_n|^2$, and using a standard result  in the theory of elliptic operators. Unfortunately the obtained  $L^\infty$-estimate only holds on a smaller set $R\Subset R'$, so one obtains convergence of the sequence  $(\tilde g_n)_n$ only on $R$ (more precisely on the complement in $R$ of a union a balls around finitely many bubbling points). A priori one has no control over the pointwise norm $|\tilde g_n|$ on    $R'\setminus R$, so the $L^4(R')$-norm of  $\tilde g_n$ might become concentrated   in   this set, leaving $\tilde g_n$   to  converge to zero in $R$. In other words, passing from $R$ to $R'$ in the second step, one will lose the uniform bound from below (by a positive number) given by the first step, and the non-vanishing of the limit will no longer be guaranteed.  This is the second gap in \cite{Morg} we mentioned in the introduction.  We overcome this difficulty by  noting that $R'\setminus R$ is a union of annuli, and make use of our a priori estimates for holomorphic sections in bundles on an annulus, which are proved in  section \ref{AnnEst} of the appendix.

\subsection{The proof}  
 For any $v\in U$ we have an exact sequence
$$0\to {\cal F}_v\to {\cal F}^{\we}_v\to {\cal Q}_v\to 0,
$$
where ${\cal F}^{\we}_v$ stands for the double dual of ${\cal F}_v$. We denote by $S_v\subset X$  the support of ${\cal Q}_v$, and by $F_v$ the underlying differentiable bundle of the locally free sheaf    ${\cal F}^{\we}_v$. Note that the second Chern class of $F_v$ might change when $v$ varies in $U$.

\begin{re}
For a point $(v,x)\in U\times X$ the following are equivalent:
\begin{enumerate}
\item 	$\mathscr{F}$ is singular at $(v,x)$.
\item ${\cal F}_v$ is singular at $x$.
\end{enumerate}
\end{re}
\begin{proof}
$\mathscr{F}$ is singular $(v,x)$ if and only if $\dim(\mathscr{F}(v,x))>\rk(\mathscr{F})$. But
$$\mathscr{F}(v,x)=\mathscr{F}_{(v,x)}\otimes_{{\cal O}_{U\times X,(v,x)}} \C_x=\big(\mathscr{F}_{(v,x)}\otimes_{{\cal O}_{U\times X,(v,x)}} {\cal O}_{X_v,x} \big)\otimes_{{\cal O}_{X_v,x}} \C_x=$$
$$=({\cal F}_v)_x\otimes_{{\cal O}_{X_v,x}} \C_x={\cal F}_v(x).
$$

It suffices to recall that, since $\mathscr{F}$ is flat over $U$, one has $\rk({\cal F}_v)=\rk(\mathscr{F})$ for any $v\in U$. This can be seen using local free resolutions of $\mathscr{F}$.
\end{proof}

Therefore, putting $\mathscr{S}:=\mathrm{Sing}(\mathscr{F})$, one has for any $v\in U$
\begin{equation}\label{SSu}
\mathscr{S}\cap X_v=\mathrm{Sing}({\cal F}_v)= S_v=\supp\big(\qmod{{\cal F}_v^{\we}}{{\cal F}_v}\big).
\end{equation}

\vspace{2mm}

Let $u\in U^\ss$, $(u_n)_n$ be a sequence of $U^\ss$  such that $\lim_{n\to\infty} u_n=u$.
Since the problem is local around $u$, we may suppose that $\det (\mathscr{F})\simeq p_X^*({\cal D})$, and we fix an isomorphism 
$$p_X^*({\cal D})\textmap{\simeq} \det (\mathscr{F}).$$
This isomorphism will induce isomorphisms ${\cal D}\textmap{\simeq}\det({\cal F}_u)$ for every $u\in U$.
 Fix a Hermitian-Einstein metric $\chi$ on ${\cal D}$, and denote by the same symbol its pull-back on $\det (\mathscr{F})$ and also its restrictions to $\det({\cal F}_u)$ for any $u\in U$. We will denote by $a$ the Chern connection of the pair $({\cal D},\chi)$. The underlying differentiable line bundle of ${\cal D}$ will be denoted by $D$. For $\varepsilon>0$ put
$$B_\varepsilon(S_u):=\bigcup_{x\in S_u} B_\varepsilon(x),
$$
where, for any $x\in S_u$,  $B_\varepsilon(x)$ denotes the inverse image of the ball $B_\varepsilon(0)\subset \C^2$ via a fixed chart $\cg_x:W_x\to B_1(0)$ centered at $x$. 

 We  choose $\varepsilon$ sufficiently small  such that for $p\ne q$ one gets disjoint open balls  $B_\varepsilon(p)$, $B_\varepsilon(q)$. Put
$$R_\varepsilon:= X\setminus B_\varepsilon(S_u).
$$

The projection $\pg:\mathscr{S}\to U$ is proper, and $\pg^{-1}(u)=\mathrm{Sing}({\cal F}_u)=S_u$. For any $\varepsilon>0$  the intersection $\mathscr{S}\cap(U\times B_\varepsilon(S_u))$ is an open neighbourhood  of  $\pg^{-1}(u)$, so, since $\pg$ is proper, there exists $V_\varepsilon$, an open neighbourhood of $u$ in $U$, such that $\pg^{-1}(V_\varepsilon)\subset U\times B_\varepsilon(S_u)$.  Equivalently,
$$\mathscr{S}\cap (V_\varepsilon\times X)\subset U\times B_\varepsilon(S_u);
$$
that is,
$$(V_\varepsilon \times R_\varepsilon)\cap\mathscr{S}=\emptyset.
$$
Therefore
\begin{re}
There exists $\eg>0$ such that for any  $\varepsilon\in (0,\eg]$, there exists an open neighbourhood $V_\varepsilon$ of $u$ in $U$ such that the restriction $\resto{\mathscr{F}}{V_\varepsilon\times R_\varepsilon}$ is locally free.	
\end{re}

By Remark \ref{SmoothBase} one can assume that $U$ is smooth.  Suppose $V_{\eg}$ is biholomorphic to a ball, and fix a biholomorphism $\bg:V_{\eg}\to B_1(0)\subset\C^{\dim(U)}$ mapping $u$ to 0. We may suppose that, for a monotone increasing function $\nu:(0,\eg]\to (0,1]$, one has $V_\varepsilon=\bg^{-1}(B_{\nu_\varepsilon}(0))$ for any $\varepsilon\in (0,\eg]$. Consider   a Hermitian metric on the bundle 
$$\resto{\mathscr{F}}{(U\times X)\setminus \mathscr{S}}$$
which induces $\chi$ on $\det(\mathscr{F})$. The determinant of the associated Chern connection coincides with the restriction of $p_X^*(a)$  to $(U\times X)\setminus \mathscr{S}$, so it is trivial in the horizontal directions.

For $(v,x)\in V_\varepsilon\times R_\varepsilon$ use parallel transport (with respect to this Chern connection) along the segments $\bg^{-1}\{(t\bg(v),x)| t\in[0,1]\}$ to identify   the restrictions $\resto{F_v}{R_\varepsilon}$, $\resto{F_u}{R_\varepsilon}$ of the underlying differentiable bundle $F$  of $\resto{\mathscr{F}}{(U\times X)\setminus \mathscr{S}}$ to $\{v\}\times R_\varepsilon$, $\{u\}\times R_\varepsilon$. One gets smooth  bundle isomorphisms (which induce the identity on the determinant line bundles)
$$\varphi_{\varepsilon,v}:\resto{F_u}{R_\varepsilon}\to \resto{F_v}{R_\varepsilon}\ \forall\varepsilon\in(0,\eg] \ \forall v\in V_\varepsilon
$$
such that, for $\varepsilon'<\varepsilon\leq \eg$, one has
$$\resto{\varphi_{\varepsilon',v}}{R_\varepsilon}=\varphi_{\varepsilon,v}\ \forall v\in V_{\varepsilon'}.
$$
Note that
\begin{re}\label{Conv}
Let $r\in(0,\eg]$. For any $\varepsilon\in(0,r]$ one has
$\lim_{v\to u} \resto{\varphi_{\varepsilon,v}}{R_r}=\id_{(\resto{F_u}{R_r})}
$ in the ${\cal C}^\infty( R_r)$-topology, all the maps here being regarded as maps $\resto{F_u}{R_r}\to \resto{F}{V_r\times R_r}$. 
\end{re}
%
%
%
%

There exists a decreasing sequence $(\varepsilon_n)_n\to 0$ such  that $u_n\in V_{\varepsilon_n}$. Put
$$\psi_n:=\varphi_{\varepsilon_n,u_n}: \resto{F_u}{R_{\varepsilon_n}}\to  \resto{F_{u_n}}{R_{\varepsilon_n}}
$$ 
For $v\in U$ denote by $\delta_v$ the integrable semi-connection on the bundle $F_v$ induced by the holomorphic structure ${\cal F}_v^{\we}$ (which is locally free on $X\setminus S_v$).
For any $n\in\N$ consider the integrable semi-connection
$$\delta^{u_n}_u:=\psi_n^*(\delta_{u_n})
$$
on the restriction $\resto{F_u}{R_{\varepsilon_n}}$. For    $r\in(0,\eg]$ let $n_r\in\N$ be such that $\varepsilon_n< r$ for any $n\geq n_r$. Therefore, for any $n\geq n_r$, the isomorphism $\psi_n$  and the semi-connection $\delta^{u_n}_u$ are defined on $R_r$.

Using Remark \ref{Conv} we see that for any fixed  $r\in(0,\eg]$ one has
\begin{equation}\label{delta-conv}
\lim_{\substack{ n\to\infty}}\delta^{u_n}_u=\delta_u\hbox{ in  ${\cal C}^\infty(  R_r)$} ,\\
	\end{equation}
the semi-connection on the left being defined on $R_r$ for any $n\geq n_r$.
Fixing a Hermitian metric $h$ on $F_u$ with $\det(h)=\chi$, we  have for any $r\in (0,\eg]$:
\begin{equation}\lim_{\substack{n\to\infty}}A(h,\delta^{u_n}_u)=A(h,\delta_u) \hbox{ in  ${\cal C}^\infty(  R_r)$}, 
	\end{equation}
the connection on the left being defined on $R_r$ for any $n\geq n_r$.
Putting $C:=\sup_X |F_{A(h,\delta_u)}|+1$ we see that
\begin{re}\label{boundRe}
For any  $r\in(0,\eg]$  there exists a natural number  $m_r\geq n_r$  such that 
\begin{equation}\label{BoundOnR}
\sup_{R_r}	|  F_{A(h,\delta^{u_n}_u)}|\leq C\ \forall n\geq m_r.  
\end{equation}

\end{re}

For every $n\in\N$, denote by $H_{u_n}$ a    Hermite-Einstein  metric on the holomorphic bundle ${\cal F}_{u_n}^\we$ which induces $\chi$ on  $\det({\cal F}_{u_n})$. This implies that  the  corresponding Chern connections $A_n:=A(H_{u_n},\delta_{u_n})$ are projectively ASD (i.e. $\big(F_{A_n}^0\big)^+=0$), and $\det(A_n)=a$ (via the fixed identification).  Passing to a subsequence if necessary we may suppose that the isomorphism type of $(F_{u_n},H_n)$ is independent of $n$; let $(F_0,H_0)$ be a	Hermitian 2-bundle on $X$ which is isomorphic to   $(F_{u_n},H_n)$ for any $n\in\N$, and fix a unitary isomorphism $\det(F_0,H_0)\textmap{\simeq}(D,\chi)$.

Using the Donaldson-Uhlenbeck compactness theorem \cite{DK} it follows that there exist 
\begin{enumerate}[\it i)]
\item unitary isomorphisms $\alpha_n:(F_0,H_0)\to (F_{u_n},H_{u_n})$ with $\det(\rho_n)=1$,
\item A Hermitian 2-bundle $(V,K)$ with an identification $\det(V,K)\textmap{\simeq}(D,\chi)$ on $X$,
\item A projectively ASD Hermitian connection $A$ on $V$ with $\det(A)=a$,
\item A finite set $\Sigma_0\subset X$, and a unitary isomorphism $\nu_0:\resto{F_0}{X\setminus \Sigma_0}\to \resto{V}{X\setminus\Sigma_0}$ with $\det(\nu_0)=1$,
\end{enumerate}
such that, passing to a subsequence if necessary, the sequence $(\alpha_n^*(A_n))_n$ converges to $\nu_0^*(A)$ in the Fréchet ${\cal C}^\infty$-topology of $X\setminus \Sigma_0$.

We take $\Sigma_0$ to be minimal with this property, so for any point $x\in \Sigma_0$ and for any ball  $B$ centered at $x$ one has 
\begin{equation}\label{OnTheBalls}
\liminf_{n\to\infty}\int_{\bar B} |F_{A_n}^0|^2 \vol_g\geq 8\pi^2.	
\end{equation}
We denote by ${\cal V}$ the holomorphic bundle associated with the pair $(V,\bar\partial_A)$. 

Note now that the obvious line bundle isomorphism $\det(F_u,h)\textmap{\simeq}\det(F_0,H_0)$ always has a unitary isomorphic lift
$$\rho:\resto{(F_u,h)}{X\setminus S_u}\to \resto{(F_0,H_0)}{X\setminus S_u}.
$$
 Indeed, when $S_u=\emptyset$, it follows by (\ref{SSu}) that $S_v=\emptyset$ and $F_u\simeq F_v$ for any $v$ in a neighbourhood of $u$. In this case $X\setminus S_u=X$, and the obstruction to the existence  of a lift $\rho$ as above vanishes.  When $S_u\ne \emptyset$ this obstruction obviously vanishes because $H^4(X\setminus S_u,\Z)=0$.  Now put
$$\rho_n:=\alpha_n\circ \rho:\resto{F_u}{X\setminus S_u}\to \resto{F_{u_n}}{X\setminus S_u},\  \nu:=\nu_0\circ \rho: \resto{F_u}{X\setminus (S_u\cup \Sigma_0)}\to \resto{V}{X\setminus (S_u\cup \Sigma_0)}.$$
With these notations we have
\begin{equation}\label{rho-prop}
\begin{gathered}
\rho_n^*(H_{u_n})=h,\ \det(\rho_n)\equiv 1 \hbox{  on } X\setminus S_u\\	
\lim_{n\to \infty}\resto{\rho_n^*(A(H_{u_n},\delta_{u_n}))}{X\setminus (S_u\cup \Sigma_0)}=\resto{\nu^*(A)}{X\setminus (S_u\cup \Sigma_0)} \hbox{ in }{\cal C}^\infty(X\setminus (S_u\cup \Sigma_0))
\end{gathered}
\end{equation}
It is convenient to define $\Sigma:=\Sigma_0\setminus S_u$, hence to write the union $S_u\cup \Sigma_0$ as a disjoint union $S_u\cup \Sigma$.
\vspace{2mm}

Put  
$$\delta_u'^{u_n}:=\rho_n^*(\delta_{u_n}).  
$$
With this notation one has:
\begin{equation}\label{delta'-conv}
\lim_{n\to\infty}\delta_u'^{u_n}=\nu^*(\bar\partial_A) \hbox{ in }{\cal C}^\infty(X\setminus (S_u\cup \Sigma)),
\end{equation}
\begin{equation}\label{delta'}
\rho_{n*}(A(h,\delta_u'^{u_n}))=A(H_{u_n},\delta_{u_n}).
\end{equation}
Put
\begin{equation}\label{DefJnGn}
\begin{gathered}
j_n=\rho_n^{-1} \psi_n:\resto{F_u}{R_{\varepsilon_n}}\textmap{\simeq} \resto{F_u}{R_{\varepsilon_n}},\\	
g_n=\nu\circ j_n:\resto{F_u}{R_{\varepsilon_n}\setminus \Sigma}\textmap{\simeq} \resto{V}{R_{\varepsilon_n}\setminus \Sigma}.
\end{gathered}	
\end{equation}
Putting $H^{u_n}_u:=\psi_n^*(H_{u_n})$, one has
\begin{equation}\label{jnPBs} j_{n*}(H^{u_n}_u) =\resto{h}{R_{\varepsilon_n}},\ j_{n*}(\delta_u^{u_n}) =\resto{\delta_u'^{u_n}}{R_{\varepsilon_n}}.
\end{equation}
This gives the equality of $K$-unitary connections
\begin{equation}\label{gnConn}
g_{n*}(A(H^{u_n}_u,\delta_u^{u_n}))=A(K,\nu_*(\delta_u'^{u_n}))
\end{equation}
on $\resto{V}{R_{\varepsilon_n}\setminus \Sigma}$. Taking into account the convergence property (\ref{delta'-conv}) we get:

\begin{equation}\label{ConvConn}
\lim_{n\to\infty} g_{n*}(A(H^{u_n}_u,\delta_u^{u_n}))=A.
\end{equation}
Let    $j_n^*$ be the adjoint of $j_n$ with respect to $h$, and put
$$\tau_n:=\tr (j_n^*\circ j_n)\in {\cal C}^\infty(R_{\varepsilon_n},\R_{>0}).$$ 
\begin{re}\label{tau-estimate} For any $r\in(0,\eg]$  and $n\geq m_r$ the function $\tau_n$ is defined on $R_r$, and one has
$$-i\Lambda\partial\bar\partial \tau_n\leq C \tau_n \hbox{ on } R_r,
$$
for some positive constant $C$.	
\end{re}
\begin{proof}
This follows from Corollary \ref{TrEstimateCo}	and Remark \ref{BoundOnR}, noting that, by (\ref{jnPBs}),     
$$j_n: \resto{F_u}{R_{\varepsilon_n}}\to \resto{F_u}{R_{\varepsilon_n}}$$
is a holomorphic morphism with respect to the pair $ (\delta_u^{u_n},\delta_u'^{u_n})$. 
\end{proof}

Using now   \cite[Theorem 9.20]{GT} we get:

\begin{lm}\label{Crr}
For any pair  $(r,r')\in(0,\eg]\times (0,\eg]$ with $r'<r$ there exists a constant $C_{r,r'}>0$   such that, for any $n\geq m_{r'}$, one has 
$$\sup_{R_r} \tau_n\leq C_{r,r'} \|\tau_n\|_{L^2(R_{r'})}.
$$	
\end{lm}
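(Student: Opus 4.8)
The plan is to recognise the differential inequality of Remark~\ref{tau-estimate} as the statement that each $\tau_n$ is a nonnegative subsolution of a \emph{fixed} uniformly elliptic operator, and then to invoke the interior $L^p\!\to\! L^\infty$ estimate of \cite[Theorem~9.20]{GT} on a finite cover, taking care that all constants are independent of $n$.

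First I would fix $n\ge m_{r'}$. By Remark~\ref{tau-estimate} applied with the radius $r'$, the function $\tau_n$ is then defined and positive on $R_{r'}$ and satisfies $-i\Lambda\partial\bar\partial\tau_n\le C\tau_n$ there, with $C$ independent of $n$. Rewriting this as $(i\Lambda\partial\bar\partial+C)\tau_n\ge 0$, I would observe that $-i\Lambda\partial\bar\partial$ is, modulo first-order terms, a positive multiple of the Laplace--Beltrami operator of $g$; hence $L:=i\Lambda\partial\bar\partial+C$ is a second-order operator whose principal part is uniformly elliptic with smooth coefficients determined solely by $(X,g)$, whose first-order coefficients are bounded on $X$, and whose zeroth-order coefficient is the fixed constant $C$. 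Thus $\tau_n$ is a nonnegative subsolution $L\tau_n\ge 0$ with vanishing inhomogeneous term, and, crucially, neither the operator nor $C$ depends on $n$.

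Next, since $r'<r$ gives $\overline{B_{r'}(S_u)}\subset B_r(S_u)$, we have $R_r\Subset R_{r'}$ with $\mathrm{dist}(R_r,\,X\setminus R_{r'})\ge r-r'>0$. I would therefore cover the compact set $R_r$ by finitely many coordinate balls $B_1,\dots,B_N$ of a small common radius, chosen so that the concentric doubled balls $\widehat B_1,\dots,\widehat B_N$ all lie in the interior of $R_{r'}$; this choice of centres, radius and number $N$ depends only on $r$, $r'$ and $(X,g)$, not on $n$. In each chart $L$ is a genuinely uniformly elliptic operator with bounded coefficients, so \cite[Theorem~9.20]{GT} applied with exponent $p=2$ and vanishing inhomogeneous term yields constants $c_i$, independent of $n$, with
$$\sup_{B_i}\tau_n\le c_i\,\|\tau_n\|_{L^2(\widehat B_i)}\le c_i\,\|\tau_n\|_{L^2(R_{r'})}.$$
Because the $B_i$ cover $R_r$ and every $\widehat B_i\subset R_{r'}$, taking the maximum over $i$ gives
$$\sup_{R_r}\tau_n\le\Big(\max_{1\le i\le N}c_i\Big)\,\|\tau_n\|_{L^2(R_{r'})},$$
so the assertion holds with $C_{r,r'}:=\max_i c_i$.

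The one point requiring genuine care, rather than routine verification, is the uniformity of $C_{r,r'}$ in $n$. This rests entirely on the two facts established above: that the constant $C$ in Remark~\ref{tau-estimate} is uniform for $n\ge m_{r'}$, and that the elliptic operator $-i\Lambda\partial\bar\partial$ is fixed by the background geometry. Once the covering is chosen independently of $n$, the Gilbarg--Trudinger constants $c_i$, which depend only on the ellipticity ratio, the coefficient bounds, the common radius and $C$, inherit this uniformity automatically.
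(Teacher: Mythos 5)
Your proposal is correct and follows essentially the same route as the paper, which disposes of this lemma in a single line by citing \cite[Theorem 9.20]{GT} right after Remark \ref{tau-estimate}; you have simply made explicit the points the authors leave implicit (that the elliptic operator and the constant $C$ are fixed independently of $n$, that $R_r$ lies in the interior of $R_{r'}$, and that a finite covering by balls chosen once and for all yields a uniform constant $C_{r,r'}$). No gaps.
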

We can prove now:
\begin{thry}\label{mainTh} Fix an arbitrary Hermitian metric $h$ on $F_u$. 
With the notations and under the assumptions above the following holds after passing to a subsequence: 	 There exists a sequence $(\lambda_n)_n$ of positive constants, and a non-zero holomorphic map $g_\infty:{\cal F}_u^{\we}\to {\cal V}$ such that
\begin{enumerate}
\item 	For any compact 4-dimensional submanifold with boundary
$$\Gamma\subset X\setminus (S_u\cup \Sigma)$$
 one has 
$$\resto{(\lambda_n g_n)_*(A(H^{u_n}_u,\delta_u^{u_n}))}{\Gamma}\to \resto{A}{\Gamma}\hbox{ and } \resto{\lambda_n g_n}{\Gamma}\to \resto{g_\infty}{\Gamma}\hbox{ in } {\cal C}^\infty(\Gamma),$$
\item For every $q\in \Sigma$, and for any ball $B$ with center $q$ it holds
$$\int_{\bar B} | F_{A(H^{u_n}_u,\delta_u^{u_n})}^0|^2\vol_g  \geq 8\pi^2 \hbox{ for any sufficiently large }n\in \N.
$$ 
\end{enumerate}

\end{thry}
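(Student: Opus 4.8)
The plan is to construct $g_\infty$ as a limit of suitably rescaled versions of the morphisms $g_n=\nu\circ j_n$ of (\ref{DefJnGn}), following the two-step scheme S1--S2 set up above. First I would fix two radii $0<r_0<r_1\leq\eg$ and put $R':=R_{r_0}$, $R:=R_{r_1}$, so that $R\Subset R'$ and $R'\setminus R$ is a disjoint union of spherical shells around the points of $S_u$. For the normalisation of step S1 I would set $\lambda_n:=\|g_n\|_{L^4(R')}^{-1}$ and replace $g_n$, $j_n$ by $\tilde g_n:=\lambda_n g_n$, $\tilde j_n:=\lambda_n j_n$; since $\nu$ is $K$-unitary one has $|\tilde g_n|=|\tilde j_n|$ pointwise, and with $\tilde\tau_n:=\lambda_n^2\tau_n=|\tilde j_n|^2$ the normalisation reads $\|\tilde\tau_n\|_{L^2(R')}=1$. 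The inequality of Remark \ref{tau-estimate} and the estimate of Lemma \ref{Crr} are homogeneous of degree one in $\tau_n$, so they pass verbatim to $\tilde\tau_n$; taking $r=r_1$, $r'=r_0$ in Lemma \ref{Crr} then gives a uniform bound
$$\sup_{R}\tilde\tau_n\leq C_{r_1,r_0}\|\tilde\tau_n\|_{L^2(R')}=C_{r_1,r_0}\qquad(n\geq m_{r_0}),$$
that is, a uniform $L^\infty$-bound for $\tilde g_n$ on all of $R$ (the curvature bound (\ref{BoundOnR}) holds on $R_{r_0}$, so this includes the bubbling set $\Sigma$).

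Next I would carry out the bootstrapping of step S2. On $R\setminus\Sigma$ the semi-connections $\delta_u^{u_n}$ and $\delta_u'^{u_n}$ converge smoothly to $\delta_u$ and $\nu^*(\bar\partial_A)$ by (\ref{delta'-conv}), and each $\tilde g_n$ is holomorphic with respect to the corresponding pair. Combining the uniform $L^\infty$-bound with interior elliptic estimates for the associated $\bar\partial$-operator and iterating, one obtains, after passing to a subsequence, convergence $\tilde g_n\to g_\infty$ in ${\cal C}^\infty_{\mathrm{loc}}(R\setminus\Sigma)$ with $g_\infty\colon\resto{{\cal F}_u^\we}{R\setminus\Sigma}\to\resto{{\cal V}}{R\setminus\Sigma}$ holomorphic. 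The pointwise bound $|g_\infty|^2\leq C_{r_1,r_0}$ survives in the limit, so $g_\infty$ is bounded; since the estimates of the first step in fact hold, with the single normalisation, on every $R_r$ with $r>0$ (the $L^2$-mass of $\tilde\tau_n$ on the intermediate shells being again controlled by the annulus estimates invoked below), the bootstrapping produces $g_\infty$ on all of $X\setminus(S_u\cup\Sigma)$. As $S_u\cup\Sigma$ is finite, hence of codimension two, and $g_\infty$ is a bounded holomorphic morphism, it extends across $S_u\cup\Sigma$ to a global holomorphic morphism ${\cal F}_u^\we\to{\cal V}$.

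The hard part, and the precise point at which the argument of \cite{Morg} breaks down, is to guarantee that $g_\infty\not\equiv0$. The normalisation forces the total $L^4$-mass of $\tilde g_n$ on $R'$ to equal $1$, but the upper bound and the convergence are only available on the smaller set $R$; a priori the mass could escape inward, into the shells $R'\setminus R$, forcing $g_\infty=0$ on $R$. To exclude this I would argue by contradiction. If $g_\infty\equiv0$, then $\tilde g_n\to0$ in ${\cal C}^\infty_{\mathrm{loc}}(R\setminus\Sigma)$; since $\tilde\tau_n$ is uniformly bounded on $R$, the $L^4$-mass carried by arbitrarily small balls around the finitely many points of $\Sigma$ is arbitrarily small, and therefore $\|\tilde g_n\|_{L^4(R)}\to0$. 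Now $R'\setminus R$ is a union of annuli, and the a priori estimates for holomorphic sections on an annulus proved in Section \ref{AnnEst} control the $L^4$-norm of $\tilde g_n$ over these shells by its $L^4$-norm over the adjacent outer region contained in $R$ (together with the curvature bound), giving $\|\tilde g_n\|_{L^4(R'\setminus R)}\to0$. Hence $\|\tilde g_n\|_{L^4(R')}\to0$, contradicting the normalisation. This is exactly where the annulus estimates are indispensable, and I expect it to be the main obstacle of the whole proof.

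Finally, the two assertions follow quickly. Because $\lambda_n$ is a constant scalar, conjugation of connections by $\lambda_n g_n$ agrees with conjugation by $g_n$, so $(\lambda_n g_n)_*(A(H^{u_n}_u,\delta_u^{u_n}))=g_{n*}(A(H^{u_n}_u,\delta_u^{u_n}))$ and the convergence of connections in (1) is precisely (\ref{ConvConn}) restricted to $\Gamma$, while $\resto{\lambda_n g_n}{\Gamma}\to\resto{g_\infty}{\Gamma}$ is the ${\cal C}^\infty_{\mathrm{loc}}$-convergence established above. For (2), the isometry $\psi_n$ identifies $A(H^{u_n}_u,\delta_u^{u_n})$ with $\psi_n^*(A_n)$ and hence preserves the pointwise norm of the trace-free curvature; since every $q\in\Sigma=\Sigma_0\setminus S_u$ lies off $S_u$, where $\psi_n$ is defined, the energy-concentration inequality (\ref{OnTheBalls}) for $A_n$ transfers to $A(H^{u_n}_u,\delta_u^{u_n})$, yielding $\int_{\bar B}|F^0_{A(H^{u_n}_u,\delta_u^{u_n})}|^2\,\vol_g\geq 8\pi^2$ for all large $n$.
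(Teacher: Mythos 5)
Your proof is correct and follows essentially the same route as the paper's: the same $L^4$-normalisation over the larger region $R'$, the mean-value estimate of Lemma \ref{Crr} to get a uniform sup bound on the smaller region (valid across $\Sigma$ because the target connection is Hermite--Einstein), the annulus estimates of section \ref{AnnEst} to control the shells around $S_u$, elliptic bootstrapping, and Hartogs extension, with parts (1) and (2) deduced from (\ref{ConvConn}) and (\ref{OnTheBalls}) exactly as in the paper. The only real difference is in packaging: the paper uses the annulus estimate to extend the uniform sup bound to the shells and then includes them (minus small balls around $\Sigma$) in the bootstrapping domain, so the lower $L^4$-bound from the normalisation passes directly to the limit, whereas you keep the convergence on the smaller region and run the same annulus estimate in contrapositive form as a proof by contradiction --- logically equivalent, modulo the routine adjustment of radii needed so that the outer boundary of each shell lies where the locally uniform convergence actually holds.
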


\begin{proof}
We take $r<\eg$   sufficiently small such that:\\
\begin{enumerate}[(i)]
\item  for any $x\in S_u$ one has $\bar B_r(x)\cap   \Sigma=\emptyset$,\\
\item for any $(x,y)\in S_u\times S_u$ with $x\ne y$ one has $\bar B_r(x)\cap \bar B_r(y)=\emptyset$,\\
\item ${\cal F}_u^{\we}$ and ${\cal V}$ are trivial on  $\bar B_r(x)$ for any $x\in S_u$. 
\end{enumerate}
\vspace{4mm}

For any $n\geq n_{ r/{2}}$ put 
$$\lambda_n:=\big\{\|\tau_n\|_{L^2(R_{ \frac{r}{2}})}\big\}^{-\frac{1}{2}}
$$
$$\tilde\rho_n=\lambda_n^{-1}\rho_n,\ \tilde j_n=\tilde\rho_n^{-1} \psi_n=\lambda_n j_n,\ \tilde g_n:=\nu\circ \tilde j_n=\lambda_n g_n,\ \tilde\tau_n=\tr (\tilde j_n^*\circ \tilde j_n)=\lambda_n^2\tau_n,
$$
and note that one has 
\begin{equation}\label{eq1}
\|\tilde j_n\|_{{L^4(R_{ \frac{r}{2}})}}=\| \tilde \tau_n\|_{L^2(R_{\frac{r}{2}})}=1.	
\end{equation}
Using the second equality in (\ref{eq1})  we get by Lemma  \ref{Crr}:
$$\sup_{R_{r}} |\tilde \tau_n|_h\leq C_{\frac{r}{2},r}\ \forall n\geq m_{\frac{r}{2}},
$$
hence
\begin{equation}\label{jn}
\sup_{R_{r}} |\tilde j_n|_h\leq \big\{C_{\frac{r}{2},r}\big\}^{\frac{1}{2}}\ \forall n\geq m_{\frac{r}{2}}.
\end{equation}
Using different arguments one can also bound $\tilde j_n$ uniformly  on the annuli
$$\Omega^x_{\frac{r}{2},r}:=\big\{y\in X|\ \frac{r}{2}\leq \|\cg_x(y)\|\leq r\big\},\ x\in S_u.
$$
Indeed, note first that, by (\ref{jnPBs}),  $\tilde j_n$ is a bundle  morphism 
 $$\tilde j_n:\resto{F_u}{R_{\varepsilon_n}}\to \resto{F_u}{R_{\varepsilon_n}}
 $$
which is holomorphic  with respect to the pair $(\delta_u^{u_n},\delta_u'^{u_n})$. Using (iii),   fix  trivialisations   of ${\cal V}$ and ${\cal F}_u^{\we}$ on $B_r(x)$. Taking into account (\ref{delta-conv}), and (\ref{delta'-conv}),   it follows that the sequences 
$$\left(\resto{\delta^{u_n}_u}{\Omega_{\frac{r}{2},r}}\right)_n,\ \left(\resto{\delta'^{u_n}_u}{\Omega^x_{\frac{r}{2},r}}\right)_n$$
 both converge to the standard (trivial) semi-connection  of the trivial rank 2-bundle  $\Omega^x_{\frac{r}{2},r}\times \C^2$ in  ${\cal C}^\infty(\Omega^x_{\frac{r}{2},r})$. Therefore  $\resto{\tilde j_n}{\Omega_{\frac{r}{2},r}}$ is holomorphic with respect to an integrable semi-connection  which tends (as $n\to \infty$) to the trivial  semi-connection on the trivial bundle   $\Omega_{\frac{r}{2},r}\times \Hom(\C^2,\C^2)$. Therefore Corollary \ref{ann-est} (in which we take $n=2$, $k=4$, $r=\frac{1}{2}$) applies, and gives a uniform $L^\infty$-estimate of $ \tilde j_n$ on the annulus  $\Omega_{\frac{r}{2},r}$ in terms of the  supremum  of its pointwise norm on   the outer boundary of this annulus. Taking into account (\ref{jn}), we obtain the following important
\begin{lm}  There exists $m\geq m_{\frac{r}{2}}$ and a positive constant $C\geq \big\{C_{\frac{r}{2},r}\big\}^{\frac{1}{2}}$ such that
\begin{equation}\label{unif-est-j}
	\sup_{R_{\frac{r}{2}}} |\tilde j_n|_h\leq C\  \forall n\geq m.
\end{equation}
\end{lm}

Let now $\alpha>0$ be sufficiently small such that \\
\begin{enumerate}[(a)]
\item $\bar B_\Sigma(\alpha)\cap \bar B_{S_u}(r)=\emptyset$,\\
\item 	$\bar B_p(\alpha)\cap \bar B_q(\alpha)=\emptyset$  for $(p,q)\in \Sigma\times \Sigma$ with $p\ne q$,\\
\item \label{c} $\| \tilde j_n\|_{L^4(\bar B_\Sigma(\alpha))}\leq \frac{1}{2}$ for any $n\geq m_{\frac{r}{2}}$ (which can be achieved by (\ref{jn})).
\end{enumerate}
\vspace{2mm}
Consider the open set
$$T:=\cringle{R}_{\frac{r}{2}}\setminus \bar B_\Sigma(\alpha)=X\setminus \big(\bar B_{S_u}(  {r}/{2})\cup \bar B_\Sigma(\alpha)\big),
$$
and note that, by (\ref{eq1}),   and (\ref{c}) one has
\begin{equation}\label{BoundL4}
\frac{1}{2}\leq \| \tilde j_n\|_{L^4(T)}\leq 1.	
\end{equation}

The sequences  of restrictions  $(\resto{\delta_u^{u_n}}{T})_n$, $(\resto{\delta'^{u_n}_u}{T})_n$ are both convergent in the Fréchet topology ${\cal C}^\infty(T)$, and their limits are $\resto{\delta_u}{T}$, $\resto{\nu^*(\bar\partial_A)}{T}$ respectively.  Using the estimate (\ref{unif-est-j}) and a standard bootstrapping argument as in \cite[p. 469]{Morg}, one obtains, after passing to a subsequence,  a limit
$$\tilde j_\infty=\lim_{n\to \infty} \big( \resto{\tilde j_n}{T}\big), \ j_\infty:\resto{F_u}{T}\to \resto{F_u}{T}
$$
${\cal C}^\infty(T)$; this limit will be holomorphic with respect to the limit pair of semi-connections  $(\resto{\delta_u}{T},\resto{\nu^*(\bar\partial_A)}{T})$. Moreover, taking into account the lower bound in (\ref{BoundL4}), it follows that  $\tilde j_\infty\ne 0$.  Using Hartogs' Theorem, we obtain a non-trivial holomorphic map 
$$\tilde g_\infty:=\nu\circ \tilde j_\infty: {\cal F}_u^\we\to {\cal V}.$$
Taking into account the definitions of $g_n$ (formula (\ref{DefJnGn})) and $\tilde g_n$,  we have
$$\lim_{n\to\infty} \resto{\tilde g_n}{T}= \resto{\tilde g_\infty}{T}
$$
in ${\cal C}^\infty(T)$. As in \cite[p. 469]{Morg} one can prove (repeating the arguments above for  new pairs $(r',\alpha')$ as above, with  $0<r'<r$, $0<\alpha'<\alpha$) that  for any compact 4-dimensional submanifold with boundary $\Gamma\subset X\setminus (S_u\cup \Sigma)$  one still has 
$$\lim_{n\to\infty} \resto{\tilde g_n}{\Gamma}= \resto{\tilde g_\infty}{\Gamma}.
$$
This proves the claim that $\resto{\lambda_n g_n}{\Gamma}\to \resto{g_\infty}{\Gamma}\hbox{ in } {\cal C}^\infty(\Gamma)$ stated in the conclusion of the theorem. The claim
$$\resto{(\lambda_n g_n)_*(A(H^{u_n}_u,\delta_u^{u_n}))}{\Gamma}\to \resto{A}{\Gamma}$$
follows from formula (\ref{ConvConn}), and the claim (2) in the conclusion of the theorem follows from  formula (\ref{OnTheBalls}), taking into account that $\psi_{n*}(H^{u_n}_u)=H_{u_n}$, $\psi_{n*}(\delta_u^{u_n})=\delta_{u_n}$. 
\end{proof}

The remainder of the proof of Proposition \ref{PropCont2} follows almost literally that in 
sections 4.2.2 and 4.2.3 of \cite{Morg}. Briefly, the analysis splits again into
two cases, namely when ${\cal F}_u$ is stable and when it is properly
semi-stable respectively.  The only (self-evident) change required is in
the properly semi-stable case, to take account of the fact that 
$\det ({\cal F}_u)=\det({\cal F}_{u_n})={\cal D}$, which is not assumed
to be trivial in our case. Thus the destabilising sequence in the result
analogous to Theorem 4.2.11 of \cite{Morg} has the form 
$$
0\to {\cal L} \to {\cal F}_u^{\we} \to {\cal L}^{-1}\otimes {\cal D}
\otimes {\cal I}_W\to 0\;,
$$
where $2\deg_g({\cal L}) = \deg_g({\cal D})$.

\section{Appendix}

\subsection{An identity for holomorphic sections in Hermitian holomorphic bundles}

Let $(X,g)$ be a Hermitian manifold, $({\cal E},h)$ be a Hermitian holomorphic bundle on $X$. Using the convention ``a Hermitian product is linear with respect to the second argument",  a simple computation gives
\begin{lm}\label{ddbar|s|2} Let $s\in H^0({\cal E})$ and let  $A$ be the Chern connection of the  Hermitian holomorphic bundle    $({\cal E},h)$. One has the identity
\begin{equation}\label{ddc}
i \partial\bar\partial \big(|s|_h^2\big)=(-i F_{A} s,s)_h-i(\partial_{A} s\wedge \partial_{A} s)_h,
\end{equation}
where the second term 	$-i(\partial_{A_\alpha} s\wedge \partial_{A_\alpha} s)_h$ is a non-negative (1,1)-form. Moreover one has
$$-i\Lambda(\partial_{A_\alpha} s\wedge \partial_{A_\alpha} s)_h=|\partial_{A_\alpha} s|^2.
$$
In particular one has the pointwise inequality
\begin{equation}\label{ineQs}
-i \Lambda_g\partial\bar\partial \big(|s|_h^2\big)\leq 	(i\Lambda_g F_{A} s,s)_h.
\end{equation}

\end{lm}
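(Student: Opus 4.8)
The plan is to derive everything from the defining properties of the Chern connection $A$ together with the hypothesis $\bar\partial s=0$, and then read off the two pointwise statements about the second term and the inequality by contracting with $\Lambda_g$. Recall that $A=\partial_A+\bar\partial_A$ with $\bar\partial_A=\bar\partial$ the Dolbeault operator of $({\cal E},h)$, that $A$ is compatible with $h$, and that its curvature $F_A$ is of type $(1,1)$, so $F_A=\partial_A\bar\partial_A+\bar\partial_A\partial_A$. Since $s$ is holomorphic we have $\bar\partial_A s=0$, whence $\nabla s=\partial_A s$ and, from $F_A s=(\partial_A\bar\partial_A+\bar\partial_A\partial_A)s$, the key identity $\bar\partial_A\partial_A s=F_A s$.

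First I would compute the first derivatives of $|s|_h^2=(s,s)_h$. Metric compatibility together with $\bar\partial_A s=0$ gives $d|s|_h^2=(\partial_A s,s)_h+(s,\partial_A s)_h$, and comparing bidegrees (in the conjugate-linear first slot a $(1,0)$-form becomes a $(0,1)$-form) yields $\partial|s|_h^2=(s,\partial_A s)_h$ and $\bar\partial|s|_h^2=(\partial_A s,s)_h$. Applying $\bar\partial$ to the first of these, using the Leibniz rule for the $h$-pairing of $\cal E$-valued forms and the identity $\bar\partial_A\partial_A s=F_A s$ for the $(1,1)$-component, I obtain
\[
\bar\partial\partial|s|_h^2=(\partial_A s,\partial_A s)_h+(s,F_A s)_h .
\]
Since $i\partial\bar\partial=-i\bar\partial\partial$, this gives $i\partial\bar\partial|s|_h^2=-i(\partial_A s\wedge\partial_A s)_h-i(s,F_A s)_h$. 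The skew-Hermitian symmetry of the unitary curvature, $(s,F_A s)_h=-(F_A s,s)_h$, then turns $-i(s,F_A s)_h$ into $i(F_A s,s)_h=(-iF_A s,s)_h$, which is exactly (\ref{ddc}).

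Next I would prove the two pointwise facts about $-i(\partial_A s\wedge\partial_A s)_h$ by a local computation. Writing $\partial_A s=\sum_k dz^k\otimes s_k$ in holomorphic coordinates and using $(\alpha\otimes a,\beta\otimes b)_h=\bar\alpha\wedge\beta\,(a,b)_h$, one finds
\[
-i(\partial_A s\wedge\partial_A s)_h=i\sum_{k,l}(s_k,s_l)_h\,dz^l\wedge d\bar z^k .
\]
The matrix $\big((s_k,s_l)_h\big)$ is Hermitian and positive semi-definite, its quadratic form being $\big|\sum_l\xi_l s_l\big|_h^2$, so this $(1,1)$-form is non-negative; contracting with $\Lambda_g$ in a unitary coframe produces the trace $\sum_k(s_k,s_k)_h=|\partial_A s|^2$, which is the asserted identity $-i\Lambda_g(\partial_A s\wedge\partial_A s)_h=|\partial_A s|^2$.

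Finally, applying $\Lambda_g$ to (\ref{ddc}) and using the previous step gives $i\Lambda_g\partial\bar\partial|s|_h^2=\Lambda_g(-iF_A s,s)_h+|\partial_A s|^2$, and the bookkeeping $(-iF_A s,s)_h=i(F_A s,s)_h$ together with $(i\Lambda_g F_A s,s)_h=-i(\Lambda_g F_A s,s)_h$ yields $\Lambda_g(-iF_A s,s)_h=-(i\Lambda_g F_A s,s)_h$, so that
\[
-i\Lambda_g\partial\bar\partial|s|_h^2=(i\Lambda_g F_A s,s)_h-|\partial_A s|^2 .
\]
Dropping the non-negative term $|\partial_A s|^2$ gives (\ref{ineQs}). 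The only genuine obstacle here is sign bookkeeping: with the convention that $h$ is conjugate-linear in the first argument, every form-valued section placed in that slot has its form part conjugated, and this is precisely what makes $-i(\partial_A s\wedge\partial_A s)_h$ come out non-negative and what converts $\Lambda_g(-iF_A s,s)_h$ into $-(i\Lambda_g F_A s,s)_h$. Once the conventions are fixed consistently, all the identities are routine.
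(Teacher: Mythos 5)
Your computation is correct and is exactly the "simple computation" the paper leaves to the reader (the paper states the lemma without a written proof): expand $\partial\bar\partial(s,s)_h$ using metric compatibility, $\bar\partial_A s=0$ and $\bar\partial_A\partial_A s=F_A s$, then check positivity and the trace identity in coordinates. The sign bookkeeping with the conjugate-linear first slot, including $(s,F_As)_h=-(F_As,s)_h$ and $\Lambda_g(-iF_As,s)_h=-(i\Lambda_gF_As,s)_h$, is handled consistently, so nothing is missing.
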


\begin{co}\label{TrEstimateCo}
Let $(X,g)$ be a complex Hermitian manifold, $(E,h)$, $(E',h')$ be differentiable Hermitian complex bundles on $X$, let $\delta$, $\delta'$ be integrable semi-connections on $E$, $E'$, and let $f\in A^0(\Hom(E,E'))$ be a bundle morphism which is holomorphic with respect to the pair $(\delta,\delta')$. Denote by $A$, $A'$ the Chern connections associated with the pairs $(\delta,h)$, $(\delta',h')$. Then
\begin{equation}\label{TrEstimate}
-i\Lambda_g\partial\bar\partial \tr(f^*\circ f)	\leq (f\circ f^*,i\Lambda F_{A'})-(i\Lambda_g F_A,f^*\circ f).
\end{equation}
	
\end{co}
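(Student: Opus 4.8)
The plan is to apply Lemma \ref{ddbar|s|2} to the Hermitian holomorphic bundle $\Hom(E,E')$ and to the holomorphic section $f$. First I would equip $\Hom(E,E')$ with the Hermitian metric $\hat h$ induced by $h$ and $h'$, and with the integrable semi-connection $\hat\delta$ induced by $\delta$ and $\delta'$. Since $f$ is holomorphic with respect to the pair $(\delta,\delta')$, it is precisely a holomorphic section of $(\Hom(E,E'),\hat\delta)$, that is $f\in H^0(\Hom(E,E'))$. The Chern connection of $(\Hom(E,E'),\hat h,\hat\delta)$ is the standard $\Hom$-connection $\hat A$ built from $A$ and $A'$, characterised by $\nabla_{\hat A}\phi=\nabla_{A'}\circ\phi-\phi\circ\nabla_A$.

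Next I would invoke the pointwise inequality (\ref{ineQs}) of Lemma \ref{ddbar|s|2} with $({\cal E},h,A,s)$ replaced by $(\Hom(E,E'),\hat h,\hat A,f)$, obtaining
$$-i\Lambda_g\partial\bar\partial\big(|f|^2_{\hat h}\big)\leq \big(i\Lambda_g F_{\hat A}\,f,\,f\big)_{\hat h}.$$
The left-hand side already matches the one in (\ref{TrEstimate}) once one records that the induced (Hilbert--Schmidt) metric satisfies $|f|^2_{\hat h}=\tr(f^*\circ f)$, where $f^*$ is the adjoint of $f$ taken with respect to $h$ and $h'$. Thus the only remaining task is to identify the curvature term on the right.

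Using the $\Hom$-connection I would compute the curvature endomorphism as $F_{\hat A}(\phi)=F_{A'}\circ\phi-\phi\circ F_A$, whence $i\Lambda_g F_{\hat A}\,f=(i\Lambda_g F_{A'})\circ f-f\circ(i\Lambda_g F_A)$. Expanding the Hilbert--Schmidt pairing $(\phi,\psi)_{\hat h}=\tr(\phi^*\circ\psi)$, using that $i\Lambda_g F_A$ and $i\Lambda_g F_{A'}$ are self-adjoint (the Chern connections being unitary) together with the cyclicity of the trace, one obtains $\big(i\Lambda_g F_{\hat A}\,f,\,f\big)_{\hat h}=(f\circ f^*,\,i\Lambda_g F_{A'})-(i\Lambda_g F_A,\,f^*\circ f)$, which is exactly the right-hand side of (\ref{TrEstimate}). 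The main obstacle here is purely bookkeeping: one must keep the sign and linearity conventions of Lemma \ref{ddbar|s|2} consistent throughout, verify the $\Hom$-curvature formula, and carry out the trace manipulations with care. There is no genuine analytic difficulty, since the statement is a direct specialisation of the Lemma to the bundle $\Hom(E,E')$.
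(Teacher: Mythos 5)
Your proposal is correct and follows essentially the same route as the paper: both pass to $\Hom(E,E')\simeq E^\smvee\otimes E'$ with the induced metric and semi-connection, identify $f$ with a holomorphic section satisfying $|f|^2=\tr(f^*\circ f)$, apply the pointwise inequality of Lemma \ref{ddbar|s|2}, and use the curvature formula $F_{\hat A}f=F_{A'}\circ f-f\circ F_A$ together with trace identities. Your write-up merely spells out the bookkeeping that the paper leaves implicit.
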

\begin{proof}
Endow the bundle $F:=E^\smvee\otimes E'$ with the integrable semi-connection $\theta:=\delta^\smvee\otimes \delta$ and with the Hermitian metric $\chi:=h^\smvee\otimes h'$. Let $B$ be the Chern connection of the pair $(\theta,\chi)$. The map $f$ can be identified with a $\theta$-holomorphic section of $F$, and one has $\tr(f^*\circ f)=| f|^2_\chi$.
It suffices to note that $F_Bf=F_{A'}\circ f-f\circ F_A$.
\end{proof}
%
%
%
%
%
%
%
%
%
%
%
%
%
%

\subsection{Estimates for holomorphic sections on an annulus}\label{AnnEst}

Put 
$$D:=\{u\in\C^n|\ \|u\|\leq 1\},\ S:=\{u\in\C^n|\ \|u\|= 1\}.$$ 
Endow the trivial rank $k$-holomorphic bundle $E:=D\times\C^k$ over $D$ with the Hermitian $G$  obtained by multiplying the standard metric with the function   
$$u\mapsto \exp\big(\frac{\|u\|^2}{2}\big).
$$
The curvature of the corresponding Chern connection is
$$F_G=-\frac{1}{2}\sum_{j=1}^n dz_i\wedge d\bar z_i= i\omega_0\otimes\id_{E},
$$
where $\omega_0$ stands for the standard Kähler form on $\C^n$, hence $(E,G)$ is a negative Hermitian holomorphic bundle in the sense of Griffiths  (see for instance \cite[section 6]{DrF}).
For $r\in(0,1)$ put
$$\Omega_{r,1}:=\{u \in\C^n|\ r\leq \|u\|\leq 1\},
$$
and denote by $E_r$, $G_r$ the restrictions of $E$, respectively $G$ to $\Omega_{r,1}$. Let $\alpha\in A^{0,1}(\Omega_{r,1},\gl(k))$ be a (0,1)-form such that $\bar\partial\alpha+\alpha\wedge\alpha=0$, i.e. such that the semi-connection $\delta_\alpha:=\bar\partial+\alpha$ on $E_r$ is integrable, and denote by ${\cal E}_r^\alpha$ the corresponding holomorphic bundle. If $\alpha$ is sufficiently ${\cal C}^1(\Omega_{r,1})$-small, the Hermitian holomorphic bundle $({\cal E}^\alpha_r,\resto{G}{\Omega_{r,1}})$ will still be Griffiths negative. 
\begin{pr}\label{Griffiths} Suppose $n\geq 2$. There exists a positive constant $\eta_r$ such that for any 
(0,1)-form $\alpha\in A^{0,1}(\Omega_{r,1},\gl(k))$ with 
\begin{equation}\label{condi}
\bar\partial\alpha+\alpha\wedge\alpha=0,\ \|\alpha\|_{{\cal C}^1(\Omega_{r,1})}<\eta_r,	
\end{equation}
and any holomorphic section $s\in H^0(\Omega_{r,1},{\cal E}_r^\alpha)$ one has	
$$\sup_{\Omega_{r,1}} |s|_G= \sup_{S} |s|_G.
$$
\end{pr}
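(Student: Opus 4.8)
The plan is to reduce the asserted equality to the ordinary maximum principle on the full ball $D$, by extending $s$ holomorphically across the inner hole; the hypothesis $n\ge 2$ is exactly what makes this extension possible. Since $S\subset\Omega_{r,1}$ already gives $\sup_S|s|_G\le\sup_{\Omega_{r,1}}|s|_G$ for free, only the reverse inequality needs proof. The first, easy ingredient is subharmonicity. Writing $\phi:=|s|_G^2$ and letting $A$ be the Chern connection of $({\cal E}_r^\alpha,G)$, the identity (\ref{ddc}) of Lemma \ref{ddbar|s|2} expresses $i\partial\bar\partial\phi$ as the sum of $(-iF_As,s)$ and $-i(\partial_As\wedge\partial_As)$. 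The second term is a non-negative $(1,1)$-form by that same lemma, and Griffiths negativity of $({\cal E}_r^\alpha,G)$ — which holds because $\|\alpha\|_{{\cal C}^1}<\eta_r$, just as for the unperturbed model bundle $(E,G)$ — makes the first term non-negative as well. Hence $i\partial\bar\partial\phi\ge 0$, so $\phi$ is plurisubharmonic, in particular subharmonic, on $\Omega_{r,1}$; by (\ref{ineQs}) its maximum is attained on $\partial\Omega_{r,1}=S\cup\{\|u\|=r\}$, and the whole content of the statement is to exclude the inner sphere $\{\|u\|=r\}$.

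The mechanism for excluding the inner sphere is a Hartogs-type extension. When $\alpha=0$, $s$ is a holomorphic $\C^k$-valued function on the shell, so for $n\ge 2$ Hartogs' theorem extends it to a holomorphic $\tilde s$ on $D$. The weight metric $G$ is Griffiths negative on all of $D$, so $|\tilde s|_G^2$ is subharmonic on $D$ and the maximum principle gives $\sup_D|\tilde s|_G^2=\sup_{\partial D}|\tilde s|_G^2=\sup_S|s|_G^2$; since $\Omega_{r,1}\subset D$, this yields $\sup_{\Omega_{r,1}}|s|_G\le\sup_S|s|_G$, as desired.

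For small $\alpha\ne 0$ the plan is to carry out the same extension for the perturbed holomorphic structure. First I would extend $\alpha$ to a small integrable $\tilde\alpha$ on $D$, so that $({\cal E}^{\tilde\alpha},G)$ is a holomorphic bundle on $D$ that remains Griffiths negative for $\tilde\alpha$ small. Then, taking an arbitrary smooth extension $\hat s$ of $s$ to $D$, the $(0,1)$-form $g:=\bar\partial\hat s+\tilde\alpha\hat s$ vanishes on the shell (where $s$ is $\tilde\alpha$-holomorphic), hence is supported in the closed hole $\{\|u\|\le r\}$ and is $\bar\partial_{\tilde\alpha}$-closed. Solving $\bar\partial_{\tilde\alpha}w=g$ with $w$ again supported in the hole and setting $\tilde s:=\hat s-w$ produces a $\tilde\alpha$-holomorphic section on $D$ restricting to $s$ on the shell, after which one concludes exactly as in the unperturbed case.

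I expect the main obstacle to be precisely this last extension. The compactly supported solvability of $\bar\partial_{\tilde\alpha}w=g$ across the hole is the Bochner--Hartogs phenomenon, and it is here that $n\ge 2$ is indispensable; for the perturbed operator it should be reached by a Neumann iteration off the flat $\bar\partial$, convergent precisely because $\|\alpha\|_{{\cal C}^1}$ is small, and the same smallness is what keeps $({\cal E}^{\tilde\alpha},G)$ Griffiths negative on the filled-in ball so that subharmonicity survives there. Producing the integrable extension $\tilde\alpha$ is a milder preliminary of the same flavour. Once the extension is secured, subharmonicity together with the maximum principle on $D$ finishes the proof with no further difficulty.
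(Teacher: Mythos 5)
The first half of your argument coincides with the paper's: Griffiths negativity of $({\cal E}_r^\alpha,G)$ for $\|\alpha\|_{{\cal C}^1}<\eta_r$ makes $(-iF_{A_\alpha}s,s)_G$ non-negative, the second term in (\ref{ddc}) is non-negative anyway, so $|s|_G^2$ is plurisubharmonic on the shell. Where you diverge is in how $n\geq 2$ is used to exclude the inner sphere, and this is where there is a genuine gap. The paper never fills in the hole: it observes that since $n\geq 2$, every point $u\in\Omega_{r,1}$ lies on a holomorphic disk $\Delta\subset\Omega_{r,1}$ with $\partial\Delta\subset S$ (take the disk cut out of the complex line through $u$ in a direction orthogonal to $u$, so that $\|u+tv\|^2=\|u\|^2+|t|^2\|v\|^2\geq r^2$), and applies the one-variable maximum principle to $\restobig{|s|_G^2}{\Delta}$. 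That is the whole proof; no extension of $s$ or of the holomorphic structure is needed.

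Your Hartogs-filling route works verbatim only for $\alpha=0$. For $\alpha\neq 0$ the step you yourself flag as the main obstacle is, in the relevant dimension, not merely delicate but obstructed. To produce a small integrable $\tilde\alpha$ on $D$ with $\resto{\tilde\alpha}{\Omega_{r,1}}=\alpha$ you must correct a smooth extension $\alpha_0$ by some $\beta$ supported in the hole solving $\bar\partial\beta+\alpha_0\wedge\beta+\beta\wedge\alpha_0+\beta\wedge\beta=-(\bar\partial\alpha_0+\alpha_0\wedge\alpha_0)$, whose linearisation is a compactly supported $\bar\partial$-equation in bidegree $(0,2)$. For $n=2$ --- precisely the case in which Corollary \ref{ann-est} is invoked in the proof of Theorem \ref{mainTh} --- the compactly supported Dolbeault group in top antiholomorphic degree does not vanish (it is dual to the space of holomorphic $(2,0)$-forms on the ball), so there is an infinite-dimensional space of obstructions and no reason the correction exists; the Bochner--Hartogs vanishing $H^{0,q}_c=0$ you are relying on holds only for $q<n$, which covers your section-extension step ($q=1$) but not the bundle-extension step ($q=2$). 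One can extend the holomorphic bundle abstractly (for $n=2$ its reflexive extension across the hole is locally free, hence trivial on the ball), but the resulting identification with the trivial bundle on the shell is a gauge transformation you do not control, so neither the ${\cal C}^1$-smallness of $\tilde\alpha$ nor the Griffiths negativity of $G$ on the filled-in ball --- both essential to your final maximum-principle step --- is secured. The disk-slicing argument of the paper avoids all of this.
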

\begin{proof}

Let $s\in H^0(\Omega_{r,1},{\cal E}_r^\alpha)$. Using the holomorphy condition $\delta_\alpha(s)=0$ and Lemma \ref{ddbar|s|2} we obtain   the formula
\begin{equation}\label{ddcG}
i \partial\bar\partial \big(|s|_G^2\big)=(-i F_{A_\alpha} s,s)_G-i(\partial_{A_\alpha} s\wedge \partial_{A_\alpha} s)_G,
\end{equation}
where $A_\alpha$ stands for the Chern connection of the pair $({\cal E}_r^\alpha,G_r)$.
The second term on the right in (\ref{ddcG}) is obviously a non-negative (1,1)-form. Choose 	$\eta_r$ such that   $({\cal E}_r^\alpha,G_r)$ is Griffiths negative for any $\alpha\in A^{0,1}(\Omega_{r,1},\gl(k))$ satisfying (\ref{condi}). For such $\alpha$ the first term on the right in (\ref{ddcG}) will also be a non-negative (1,1)-form, hence $|s|_G^2$ will be plurisubharmonic\footnote{If $s$ is nowhere vanishing, one can even prove \cite{Be} that $\log(|s|^2_G)$ is  psh, which is a stronger property.} on $\Omega_{r,1}$. Therefore  the restriction of $|s|_G^2$ to any holomorphic disk $\Delta\subset \Omega_{r,1}$ will be subharmonic. The statement follows using the maximum principle, and taking into account that, since $n\geq 2$, any point $u\in \Omega_{r,1}$ belongs to a disk $\Delta\subset \Omega_{r,1}$ with $\partial\Delta\subset S$. 
\end{proof}

\begin{co}\label{ann-est}
In the conditions and with the notations of Proposition \ref{Griffiths} the following holds: For any Hermitian metric $h$ on $E_r$ there exists $\eta_r>0$, $C_{r,h}\geq 1$ such that for any $\alpha\in A^{0,1}(\Omega_{r,1},\gl(k))$ satisfying (\ref{condi}), and any holomorphic section $s\in H^0(\Omega_{r,1},{\cal E}_\alpha)$ one has	
$$\sup_{\Omega_{r,1}} |s|_h\leq C_{r,h} \sup_{S} |s|_h.
$$
\end{co}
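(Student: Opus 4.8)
The plan is to deduce the statement for an arbitrary metric $h$ directly from the equality $\sup_{\Omega_{r,1}}|s|_G=\sup_S|s|_G$ established in Proposition \ref{Griffiths}, using nothing beyond the compactness of the closed annulus $\Omega_{r,1}$ and the resulting uniform comparability of any two continuous Hermitian metrics on the bundle $E_r$. I would keep the same $\eta_r$ as in Proposition \ref{Griffiths}, so that the hypothesis (\ref{condi}) guarantees $({\cal E}_r^\alpha,G_r)$ is Griffiths negative and the Proposition applies.

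First I would observe that both $h$ and $\resto{G}{\Omega_{r,1}}$ are continuous Hermitian metrics on the (topologically trivial) bundle $E_r$ over the compact set $\Omega_{r,1}$. The function sending a nonzero vector $v$ in a fibre to $|v|_h^2/|v|_G^2$ is continuous, positive, and scaling-invariant, hence descends to the ($G$-)unit sphere bundle of $E_r$, which is compact. It therefore attains a positive minimum $c_1^2>0$ and a finite maximum $c_2^2>0$; being extremal values of one and the same ratio, these satisfy $c_1\le c_2$. This yields the pointwise two-sided bound
$$c_1\,|s|_G\le |s|_h\le c_2\,|s|_G$$
valid at every point of $\Omega_{r,1}$ for every section $s$, with $c_1,c_2$ depending only on $r$ and $h$. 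Crucially, neither $h$ nor $G$ refers to $\alpha$, so these constants are independent of $\alpha$ and of $s$.

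Then I would simply chain these inequalities with Proposition \ref{Griffiths}: for any $\alpha$ satisfying (\ref{condi}) and any $s\in H^0(\Omega_{r,1},{\cal E}_r^\alpha)$,
$$\sup_{\Omega_{r,1}}|s|_h\le c_2\sup_{\Omega_{r,1}}|s|_G= c_2\sup_S|s|_G\le \frac{c_2}{c_1}\sup_S|s|_h,$$
where the middle equality is exactly the conclusion of Proposition \ref{Griffiths}. Since $c_1\le c_2$ we have $c_2/c_1\ge 1$, so setting $C_{r,h}:=c_2/c_1$ furnishes a constant $\ge 1$ with the required property.

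There is essentially no analytic obstacle here: the whole substance of the estimate is contained in Proposition \ref{Griffiths}, and the extension to an arbitrary $h$ is the standard equivalence of norms on a finite-rank bundle over a compact base. The only point deserving a word of care is that the comparison constants $c_1,c_2$ must be uniform in the perturbation $\alpha$; this is immediate because the two metrics being compared are both fixed and make no reference to $\alpha$, while Proposition \ref{Griffiths} has already absorbed all of the $\alpha$-dependence into its hypothesis and conclusion.
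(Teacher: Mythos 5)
Your proposal is correct and follows essentially the same route as the paper: both deduce the estimate from Proposition \ref{Griffiths} by comparing $h$ with $G$ via the compactness of the annulus, the only (immaterial) difference being that the paper takes the lower comparison constant as an infimum over the boundary sphere $S$ rather than over all of $\Omega_{r,1}$, giving a marginally sharper $C_{r,h}$.
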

\begin{proof}
Choose $C_{r,h}:= a_{r,h} b_h$,
where:
$$a_{r,h}:= \sup_{\substack{u\in \Omega_{r,1}\\ y\in E_{r,u}\setminus\{0\}}}\frac{|y|_h}{|y|_G},\ b_h:= \sup_{\substack{u\in S\\ y\in E_{r,u}\setminus\{0\}}}\frac{|y|_G}{|y|_h}=\left( \inf_{\substack{u\in S\\ y\in E_{r,u}\setminus\{0\}}}\frac{|y|_h}{|y|_G} \right)^{-1}.
$$
\end{proof}
 Corollary \ref{ann-est} gives a priori estimates for sections which are holomorphic with respect to  perturbations $\delta_\alpha=\bar\partial+\alpha$ of the trivial holomorphic structure $\bar\partial$ in the trivial bundle $E_r$ over the annulus $\Omega_r$. This result 
can be easily generalised for sections which are holomorphic with respect to  perturbations $\bar\partial_{\cal E}+\alpha$ of the holomorphic structure $\bar\partial_{\cal E}$ of an arbitrary Hermitian holomorphic bundle ${\cal E}$ on a neighbourhood of a closed strictly pseudo-convex  hypersurface in an $n$-dimensional complex manifold ($n\geq 2$). The obtained estimate will give an alternative proof of \cite[Lemma 2.2]{Bu2}, which was the inspiration source of the results in this section.

 \end{document}